\author{Johan Bredberg}
\title{On large gaps between consecutive zeros, on the critical line, of some Dirichlet $L$-functions}
\newtheorem{theorem}{Theorem}
\newtheorem{corollary}{Corollary}
\newtheorem{lemma}{Lemma}
\theoremstyle{definition}
\newtheorem{remark}{Remark}
\theoremstyle{definition}
\newtheorem{definition}{Definition}
\begin{document}
\date{ }
\maketitle

\textbf{Abstract.} 
The aim of this text is to show the existence of large (3.54 times the average) gaps between consecutive zeros, on the critical line, of some Dirichlet $L$-functions $L(s,\chi)$, with $\chi$ being an even primitive Dirichlet character. 

\section{Introduction}
\subsection{Background and setting} \label{section-1.1}

Consider a Dirichlet $L$-function $L(s,\chi)$, with $\chi$ being a 
primitive Dirichlet character modulo $q$. 
The function $L(s,\chi)$ will have many similar properties to the Riemann 
zeta-function $\zeta(s)$. This is the case if we for example 
focus on the distribution of its zeros. 
It is relatively simple to show that $L(s,\chi)$ has regularly 
spaced zeros on the non-positive real axis, the exact location 
of these so called trivial zeros depending on the sign of $\chi(-1)$ 
(as indicated above though we will later focus on even primitive Dirichlet characters). 
All other zeros $s = \sigma + it$ (the non-trivial ones)
satisfy $0 < \sigma < 1$. 

The Generalized Riemann Hypothesis is a conjecture saying that 
in fact all the non-trivial zeros of $L(s,\chi)$ in the critical strip must lie 
on the critical line, i.e.\ satisfy $\sigma = 1/2$. 
In \cite{Hil} Hilano extends to Dirichlet $L$-functions the ideas used by 
Levinson \cite{levins}, when the latter showed that at least a third of the 
non-trivial zeros of the Riemann zeta-function lie 
on the critical line. However, to this day we do not know the 
whole truth about the horizontal distribution of the zeros.

Another simple question to ask is how many zeros there are up to a certain height in the critical strip. The zeros of a Dirichlet $L$-function are in general not symmetrically distributed with respect to the real axis. Therefore it is natural to define
\[ N(T,\chi) := \#\Big\{s = \sigma + it : L(s,\chi) = 0, 
0 < \sigma < 1 \textrm{ and } |t| < T\Big\} \]
and study how $N(T,\chi)$ behaves as a function of $T$ and of the modulus $q.$ 
By generalising the proof of the corresponding result for the Riemann zeta-function, one may show (see Chapter $16$ in \cite{dav}) that 
for $T \geqslant 2,$ 
\begin{equation*} 
\frac{1}{2}N(T,\chi) = \frac{T}{2\pi} \log(Tq/(2\pi)) - \frac{T}{2\pi} + O(\log(Tq)).
\end{equation*}
One may analogously directly obtain an expression for the number of zeros of $L(\sigma + it, \chi)$ with $T \leqslant t < 2T$ and unsurprisingly the answer is 
\begin{align*}
&\Big\{\frac{2T}{2\pi} \log(2Tq/(2\pi)) - \frac{2T}{2\pi}\Big\} 
- \Big\{\frac{T}{2\pi}\log(Tq/(2\pi)) - \frac{T}{2\pi}\Big\} + O(\log(Tq)) \\
&= \frac{T}{2\pi} \log(Tq) + O(T) + O(\log q).
\end{align*}
This tells us that if we let $T$ be large but fixed and then study $N(T,\chi)$ as a function of $q,$ the average difference of the ordinates of two consecutive zeros at height $T$ is approximately $2\pi/\log q.$ Assuming the Generalised Riemann Hypothesis, Selberg \cite{sel2} proved a result which holds uniformly for all $T$ and all primes $q.$ Denoting by $N_2(T,\chi)$ the number of zeros of $L(s,\chi)$ with $0 < \sigma < 1$ and $0 \leqslant t \leqslant T,$ possible zeros with $t = 0$ or $t = T$ counting one-half only, his result is
\[ N_2(T,\chi) = \frac{T}{2\pi} \log (Tq / (2 \pi)) - \frac{T}{2\pi} 
+ O\Big(\frac{\log (q(1 + T))}{\log \log (q(3 + T))}\Big). \]

One might wonder in more detail what the vertical distribution of 
the zeros of $L(s,\chi)$ looks like. Of course one may ask the same question for the Riemann zeta-function. Many have worked on problems 
in this area. We will briefly go through the history of such results next.

\subsection{Brief history of previous results}

Denote by $\{\gamma_{n}\}$ the sequence of ordinates of all 
zeros of $\zeta(s)$ in the upper halfplane, ordered in 
non-decreasing order. As the distribution of the $\gamma_{n}$ 
becomes denser as we move up in the critical strip, it makes 
more sense to consider the normalised vertical distance between 
consecutive zeros, i.e.\ to ask what we can say about 
\begin{equation*} 
\mu := \liminf_{n \to \infty} 
\frac{\gamma_{n+1}-\gamma_{n}}{(2\pi/\log \gamma_{n})} 
\quad \textrm{and  }  
\lambda := \limsup_{n \to \infty} 
\frac{\gamma_{n+1}-\gamma_{n}}{(2\pi/\log \gamma_{n})}.
\end{equation*} 

In 1946, Selberg \cite{sel} remarked that 
\begin{equation*} 
\mu < 1 < \lambda.
\end{equation*} 
This is the only unconditional result there is. 
Fujii remarks that a similar result holds for Dirichlet $L$-functions in his article \cite{fuj}.

In 1973, Montgomery \cite{Mont} showed that the Riemann Hypothesis\footnote{If the Riemann Hypothesis is false infinitely many times, then trivially there are infinitely many pairs of non-trivial zeros of $\zeta(s)$ with the same ordinate, and hence we would have $\mu = 0.$} implies that 
$\mu < 0.68.$ In the same article he also made, again on the assumption of the truth of the Riemann Hypothesis, his famous pair correlation conjecture for 
the non-trivial zeros of the Riemann zeta-function. If we denote such a zero by $1/2 + i\gamma,$ then it says that for fixed 
$0 < \alpha < \beta,$ 
\[ \sum_{\substack{\gamma, \gamma' \in [0,T] \\ 
2\pi \alpha / \log T \leqslant \gamma - \gamma' \leqslant 2\pi \beta /\log T}} 1 
\sim \frac{T\log T}{2\pi} \int_{\alpha}^{\beta} 
1 - \Big(\frac{\sin(\pi u)}{\pi u}\Big)^2 \, du \]
as $T \to \infty.$ 
This conjecture implies that $\mu = 0.$ 
It is believed that $\lambda = \infty.$ 

Then in 1980, Mueller \cite{mueller} managed to prove, albeit on the assumption of the truth of the Riemann Hypothesis, the existence of what one may actually begin to properly call large gaps. The underlying idea in her proof is nice and easy to explain, so we give a brief sketch here. Let us assume the Riemann Hypothesis and that 
\[ \limsup_{n \to \infty} 
\frac{\gamma_{n+1}-\gamma_{n}}{(2\pi/\log \gamma_{n})} 
< 1.9. \]
We may therefore pick a (sufficiently) large value of $T$ such that all gaps between $T$ and $2T$ are at most $1.9 \cdot 2\pi/\log T.$ Our goal is to show that the latter leads to a contradiction. 

We get that
\begin{equation} \label{mueller-ekv}
\int_{T}^{2T} |\zeta(1/2 + it)|^2 \; dt 
\leqslant \sum_{T < \gamma \leqslant 2T} 
\int_{\gamma - 0.95 \cdot 2 \pi/\log T}^{\gamma 
+ 0.95 \cdot 2 \pi/\log T} |\zeta(1/2 + it)|^2 \; dt.
\end{equation}
Hardy and Littlewood \cite{HL} had already in 1918 showed that the LHS\footnote{Left Hand Side} of (\ref{mueller-ekv}) is asymptotic to $T\log T,$ as $T \to \infty.$ In order to estimate the RHS\footnote{Right Hand Side} of (\ref{mueller-ekv}), Mueller used a result of Gonek (see \cite{Gonek-ett} or \cite{Gonek-tva}), namely that on assuming the Riemann Hypothesis we have 
\begin{equation} \label{gonek-mueller}
\sum_{0 < \gamma \leqslant T} 
\Big|\zeta\Big(\frac{1}{2} + i(\gamma + 2\pi c/\log T)\Big)\Big|^2 
= \frac{T}{2\pi} \log^2{T} 
\Big\{1-\Big(\frac{\sin(\pi c)}{\pi c}\Big)^2\Big\} 
+ O(T\log T)
\end{equation}
as $T \to \infty,$ uniformly for $|c| \leqslant (4\pi)^{-1}\log(\frac{T}{2\pi}).$ 
Clearly we may apply (\ref{gonek-mueller}) twice and take the difference in order to obtain
\begin{equation} \label{gonek-mueller-tva}
\sum_{T < \gamma \leqslant 2T} 
\Big|\zeta\Big(\frac{1}{2} + i(\gamma + 2\pi c/\log T)\Big)\Big|^2 
= \frac{T}{2\pi} \log^2{T} 
\Big\{1-\Big(\frac{\sin(\pi c)}{\pi c}\Big)^2\Big\} 
+ O(T\log T).
\end{equation}
Now we simply integrate (\ref{gonek-mueller-tva}) with respect to $c$ over the interval $[-0.95,0.95]$ and obtain that the RHS of (\ref{mueller-ekv}) is asymptotic (as $T \to \infty$) to 
\[ A T \log T, \]
where 
\[ A = \int_{-0.95}^{0.95} 
\Big\{1-\Big(\frac{\sin(\pi c)}{\pi c}\Big)^2\Big\} \; dc 
\approx 0.9973. \]
Since $0.9973 < 1$ we have a contradiction to (\ref{mueller-ekv}), as desired.

In 1981, Montgomery and Odlyzko \cite{MontOdl} improved the value of $1.9$ to 
$1.9799$ and also found the value $0.5179$ for the lim inf. Their method turned out to be the other side of Mueller's coin, as is explained by Conrey, Ghosh and Gonek \cite{conrey-comp}. The latter article also improved the numerical value of the constants, to $2.337$ and $0.5172$ respectively. 

Hall (see \cite{hall-ett,hall-tva,hall-tre,hall-fyra}) only deals with large gaps and he also only looks at the zeros of the Riemann zeta-function which lie on the critical line (regardless of whether these are all of the non-trivial zeros or not). He then analogously defines lim sup of the gaps between such zeros (this naturally coincides with the usual definition if we assume the Riemann Hypothesis) and for this he obtained the value $2.26$ in \cite{hall-ett}. This was done by using (for details on the basic idea of Hall's proof, the reader is recommended to read Section \ref{section-simple-hall}) the simplest fourth power version of Wirtinger's inequality, namely that if $y(t)$ is a continuously differentiable function satisfying 
$y(0) = y(\pi) = 0$ then 
\[ \int\limits_{0}^{\pi} y(t)^4 \, dt 
\leqslant (4/3) \int\limits_{0}^{\pi} y'(t)^4 \, dt. \]
Hall later \cite{hall-tva} chose to instead work with the Wirtinger-inequality that says that if $y(t)$ is a continuously differentiable function satisfying $y(0) = y(\pi) = 0$ and $v \geqslant 0$ then 
\[ \int\limits_{0}^{\pi} y'(t)^4 
+ 6vy(t)^2y'(t)^2 \, dt 
\geqslant 3\lambda_{0}(v) \int\limits_{0}^{\pi} y(t)^4 \, dt, \]
where 
\[ \lambda_{0}(v) := \frac{1}{8}\{ 1 + 4v + \sqrt{1 + 8v} \}. \]
By applying this inequality to Hardy's function $Z(t)$ with $v = 22/49$ he found the value 
$\sqrt{11/2} \approx 2.34$ for his lim sup. Finally, in 2005, Hall \cite{hall-fyra} applied the simplest version of the Wirtinger's inequality, which says that if $y(t)$ is continuously differentiable and is zero at $t = 0$ and $t = \pi$ then 
\begin{equation} \label{wirtingers-latt}
\int\limits_{0}^{\pi} y(t)^2 \, dt \leqslant \int\limits_{0}^{\pi} y'(t)^2 
\, dt, 
\end{equation} 
to the function 
\[ Z(t)Z(t + 1.315 \cdot 2\pi/\log T) \] 
and obtained the value $2.63$ for his lim sup.

On the assumption of the Riemann Hypothesis, it was shown in 2010 by Feng and Wu \cite{FW} that $\mu < 0.5154$ and $\lambda > 2.7327$ and in 2011 by Bredberg \cite{bredberg} that $\lambda > 2.766.$ 

On the assumption of the Generalised Riemann Hypothesis, it was shown in 2008 by Ng \cite{Ng} that $\lambda > 3$ and this was improved upon in 2011 to 
$\lambda > 3.033$ by Bui \cite{bui}. 

\subsection{Statement of the main result}

The aim of this text is to prove the following theorem:

\begin{theorem}[Main Theorem] \label{huvudsats}
Let $T > 0$ be fixed. For sufficiently large $Q$ there exist $q \in (\frac{5Q}{4},\frac{7Q}{4}]$ for which there exists an even primitive Dirichlet character $\chi (\textrm{\emph{mod} } q)$ such that the function 
$t \mapsto L(\frac{1}{2} + it,\chi)$ has no zeros in some subinterval of $[T,2T]$ of length at least $3.54 \times \frac{2\pi}{\log Q}.$
\end{theorem}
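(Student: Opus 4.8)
The plan is to adapt Mueller's moment-comparison method (the first argument sketched in the introduction) to the family of even primitive Dirichlet characters, using an averaging over the modulus $q$ in place of an averaging over the height $T$. Suppose, for contradiction, that for \emph{every} $q\in(\tfrac{5Q}{4},\tfrac{7Q}{4}]$ and every even primitive character $\chi\pmod q$, all gaps between consecutive zeros of $t\mapsto L(\tfrac12+it,\chi)$ in $[T,2T]$ are shorter than $\kappa\cdot\tfrac{2\pi}{\log Q}$ for some $\kappa<3.54$. Exactly as in \eqref{mueller-ekv}, covering $[T,2T]$ by the intervals of half-length $\tfrac{\kappa}{2}\cdot\tfrac{2\pi}{\log Q}$ centred at the zeros gives, for each such $\chi$,
\[
\int_{T}^{2T}\big|L(\tfrac12+it,\chi)\big|^{2}\,dt
\;\leqslant\;\sum_{\substack{\gamma_\chi\in[T,2T]}}\;
\int_{\gamma_\chi-\frac{\kappa}{2}\cdot\frac{2\pi}{\log Q}}^{\gamma_\chi+\frac{\kappa}{2}\cdot\frac{2\pi}{\log Q}}\big|L(\tfrac12+it,\chi)\big|^{2}\,dt .
\]
I would then sum this inequality over all even primitive $\chi\pmod q$ and over all $q$ in the dyadic-type range, so that both sides become smooth arithmetic sums to which asymptotic formulae apply; the point of averaging over $q$ (rather than fixing $q$ as in the Main Theorem's statement) is that it is the mean value over the family that is accessible, and a lower bound on the left combined with an upper bound on the right that is \emph{smaller} forces the existence of at least one $(q,\chi)$ violating the hypothesis.

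The two analytic inputs I need are the family analogues of the Hardy--Littlewood and Gonek estimates. For the left-hand side, I expect a second-moment formula of the shape
\[
\sum_{q\sim Q}\ \sideset{}{^{\flat}}\sum_{\chi\,(\mathrm{mod}\,q)}\ \int_{T}^{2T}\big|L(\tfrac12+it,\chi)\big|^{2}\,dt
\;\sim\; c_{1}\,\Phi(Q)\,T\log Q ,
\]
where $\sum^{\flat}$ restricts to even primitive characters, $\Phi(Q)$ counts them in the range, and $c_{1}$ is an explicit positive constant coming from the Euler product / divisor-sum main term; such formulae are standard (the diagonal term in $\sum_n d(n) n^{-1}$ after orthogonality of characters). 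For the right-hand side I need the shifted second moment evaluated at $\tfrac12+i(\gamma_\chi+\tfrac{2\pi c}{\log Q})$, summed over zeros $\gamma_\chi$ and over the family — i.e. the Dirichlet-$L$ family version of \eqref{gonek-mueller-tva}, which under GRH should read
\[
\sum_{q\sim Q}\ \sideset{}{^{\flat}}\sum_{\chi}\ \sum_{\gamma_\chi\in[T,2T]}\Big|L\Big(\tfrac12+i\big(\gamma_\chi+\tfrac{2\pi c}{\log Q}\big),\chi\Big)\Big|^{2}
\;\sim\; c_{2}\,\Phi(Q)\,\frac{T}{2\pi}\log^{2}Q\;\Big(1-\Big(\tfrac{\sin\pi c}{\pi c}\Big)^{2}\Big),
\]
uniformly for $|c|\le\kappa/2$. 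Integrating this over $c\in[-\kappa/2,\kappa/2]$ and comparing with the left-hand main term, the contradiction comes down to the numerical inequality
\[
c_{2}\!\!\int_{-\kappa/2}^{\kappa/2}\!\Big(1-\Big(\tfrac{\sin\pi c}{\pi c}\Big)^{2}\Big)\,dc \;<\; c_{1}
\]
failing for all $\kappa<3.54$ — i.e. choosing $\kappa$ so that this integral, suitably normalised, drops below the left-hand constant. To push the constant up to $3.54$ (well beyond the $1.9$ of the bare Mueller argument) I anticipate inserting a mollifier / amplifier $M(s,\chi)=\sum_{n\le Q^{\theta}}\tfrac{x_n\chi(n)}{n^{s}}$ into both moments, so that one really compares $\int|LM|^{2}$ with $\sum|LM|^{2}$ at shifted points, and then optimises the coefficients $x_n$ (a polynomial in $\log$ of bounded degree) together with $\kappa$.

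The main obstacle is the evaluation of the \emph{shifted} mollified discrete second moment over zeros and over the family — the analogue of Gonek's formula \eqref{gonek-mueller} with a Dirichlet polynomial attached. I would obtain it by contour-shifting: write the sum over zeros $\sum_{\gamma_\chi}F(\gamma_\chi)$ as a contour integral $\tfrac1{2\pi i}\oint \tfrac{L'}{L}(s,\chi)F(s)\,ds$ around a box enclosing $[T,2T]$, with $F(s)=|LM|^{2}$ at the shifted argument expanded via the approximate functional equation, then move contours to the critical line, pick up the main term from the pole structure, and average the resulting off-diagonal character sums using the large sieve / orthogonality over $q\sim Q$; the shift $\tfrac{2\pi c}{\log Q}$ produces the $\big(\tfrac{\sin\pi c}{\pi c}\big)^{2}$ factor exactly as in Montgomery's pair-correlation computation. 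The error terms must be controlled uniformly in $c$ over $|c|\le\kappa/2$ and must save a power of a logarithm against the main term after summing over the family; keeping the mollifier length $\theta$ large enough to gain on the constant while still closing these error estimates is the delicate balance, and is where most of the technical work of the paper will lie.
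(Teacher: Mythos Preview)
Your proposal follows a completely different route from the paper and contains genuine gaps.

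The paper does \emph{not} use Mueller's method at all. It uses Hall's Wirtinger-inequality approach: one introduces the real-valued function $W(t,\chi)$ with the same zeros as $L(\tfrac12+it,\chi)$, sets
\[
f(t,\chi,\kappa)=W\!\Big(t-\tfrac{\kappa}{\log Q},\chi\Big)\,W(t,\chi)\,W\!\Big(t+\tfrac{\kappa}{\log Q},\chi\Big),
\]
and observes that if all gaps of $W$ in $[T,2T]$ have length $\le 3\kappa/\log Q$ then all gaps of $f$ have length $\le \kappa/\log Q$. Wirtinger's inequality then gives, after summing over $\chi$ and $q$,
\[
\sum_{q}\sideset{}{^{\flat}}\sum_{\chi}\int f(t,\chi,\kappa)^{2}\,dt
\;\le\;\Big(\tfrac{\kappa}{\pi\log Q}\Big)^{2}
\sum_{q}\sideset{}{^{\flat}}\sum_{\chi}\int f'(t,\chi,\kappa)^{2}\,dt.
\]
Since $f^{2}$ is a \emph{sixth} power of $L$ with small shifts, both sides are evaluated (as functions of $\kappa$) using the Conrey--Iwaniec--Soundararajan sixth-moment theorem for Dirichlet $L$-functions averaged over $q$ and $t$. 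The resulting inequality in $\kappa$ fails for $\kappa<1.18\cdot 2\pi$, giving a gap of length $3\cdot 1.18\cdot 2\pi/\log Q\approx 3.54\cdot 2\pi/\log Q$. No hypothesis on the zeros is needed anywhere.

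Your argument, by contrast, has two structural problems. First, the Gonek-type formula you invoke sums over the \emph{zeros} $\gamma_\chi$; any known proof of such a formula (via $\tfrac{L'}{L}$ and contour shifting) needs GRH to control the horizontal integrals and to identify the $1-(\sin\pi c/\pi c)^{2}$ main term, so your argument would be conditional, whereas the theorem is stated and proved unconditionally. Second, and more seriously, a mollified second-moment Mueller/Montgomery--Odlyzko scheme has never produced a constant anywhere near $3.54$: for $\zeta$ under RH the record via that circle of ideas is about $2.7$, and the analogous $q$-aspect computation would be governed by the same pair-correlation kernel. The constant $3.54$ in this paper comes precisely from having access to the \emph{sixth} moment in the $q$-average (the CIS theorem), combined with Hall's triple-shift trick; a second-moment input cannot reach it. Your sketch does not engage with the sixth moment at all, so the numerical target is out of reach by the method you describe.
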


\begin{remark} \label{average-gaps}
Assuming the Generalised Riemann Hypothesis, the above can be compared with the discussion in Section \ref{section-1.1}, where we noted that if $T$ is fixed and large then the relevant average gap-length is $\frac{2\pi}{\log Q}.$
\end{remark}

\subsection{Simple introduction to Hall's method} \label{section-simple-hall}

As mentioned previously, Hall chose to consider only the zeros lying on the critical line. He then unconditionally found a relatively nice lower bound for the analogously defined lim sup. It is such a path we will follow in this text. For the reader's convenience we will therefore now present a brief sketch of the proof of the simplest case where Hall's idea may be applied.  

Let $T$ be large. Rather than working with the Riemann zeta-function we work with Hardy's function defined by 
(see for example Chapter $6$ in Edwards \cite{edward})
\[Z(t) := \exp(i\theta(t)) \zeta(1/2 + it),\]
with
\[ \theta(t) := \textrm{Im}\Big(\log\Big(\Gamma\Big(\frac{1}{4} + \frac{it}{2}\Big)\Big)\Big) - \frac{t}{2} \log{\pi}. \]
Now $Z(t)$ is an even real function for real $t$ and is clearly zero exactly when $\zeta(1/2 + it) = 0.$ 
Label the ordinates of the zeros of $Z(t)$ lying in the interval $[T,2T]$ as $t_1,...,t_N.$ We may make a linear substitution in the simplest version of Wirtinger's inequality (see (\ref{wirtingers-latt})) so that the role of the points $0$ and $\pi$ are overtaken by two general points, say $a$ and $b.$ Applying the outcoming result to $Z(t)$ between two consecutive zeros $t_i$ and $t_{i+1}$ yields
\begin{equation} \label{hall-inl}
\int\limits_{t_{i}}^{t_{i+1}} Z(t)^2 \, dt 
\leqslant \Big( \frac{t_{i+1} - t_{i}}{\pi} \Big)^2
\int\limits_{t_{i}}^{t_{i+1}} Z'(t)^2 \, dt 
\leqslant \Big( \frac{2\sqrt{3}}{\log T} \Big)^2
\int\limits_{t_{i}}^{t_{i+1}} Z'(t)^2 \, dt,
\end{equation}
where the latter step is true IF we assume that all gaps in the interval $[T,2T]$ are at most $\sqrt{3} \cdot \frac{2\pi}{\log T}.$ However, at this point we remark that it is our goal to show the existence of a gap of length at least $\sqrt{3} \cdot \frac{2\pi}{\log T}$ lying in $[T,2T].$ Since we are thus happy if our above assumption fails, we assume that it holds. Then we proceed by summing up the inequality (\ref{hall-inl}) for all pairs of consecutive zeros of $Z(t)$ in $[T,2T].$ If the zeros near the endpoints were not very close to the endpoints that would a priori give us a large gap, thus we may assume that $t_1$ is approximately $T$ and that $t_N$ is approximately $2T.$ We basically obtain that  
\begin{equation} \label{hall-inl-olikhet}
\int\limits_{T}^{2T} Z(t)^2 \, dt 
\leqslant \Big( \frac{2\sqrt{3}}{\log T} \Big)^2
\int\limits_{T}^{2T} Z'(t)^2 \, dt.
\end{equation}
However, it is easy to show that
\[ \int\limits_{T}^{2T} Z(t)^2 \, dt \sim T\log T \]
and that 
\[ \int\limits_{T}^{2T} Z'(t)^2 \, dt \sim T(\log T)^3/12. \]
Had we above assumed that all the relevant gaps were at most \emph{slightly less} than $\sqrt{3} \cdot \frac{2\pi}{\log T},$ we would now have obtained a contradiction when we put in these two well-known estimates into our inequality (\ref{hall-inl-olikhet}). Thus our goal has been reached.

\subsection{Overview of the remainder of this text}

We will here give a brief overview of the remainder of this text. The hope is to quickly help the reader to understand what the big picture is. It is crucial that the reader has read Section \ref{section-simple-hall} which describes the simplest case in which Hall's idea may be applied. For more details and proofs, the reader is of course welcome to read the full text.

We suppose (see Main Assumption in Section \ref{section-kap-fem}) that for all 
$\frac{5Q}{4} < q \leqslant \frac{7Q}{4}$ and all even primitive Dirichlet characters $\chi \textrm{(mod } q)$ we have that all the gaps lying in the interval $[T,2T]$ between consecutive zeros of the function 
$t \mapsto L(\frac{1}{2} + it,\chi)$ are at most $\frac{3\kappa}{\log Q}$ 
($\kappa$ will in fact be \emph{defined} to be the smallest number satisfying this). 
We will prove Theorem \ref{huvudsats} by showing that it is necessary that $\kappa > 1.18 \times \frac{2\pi}{\log Q}.$ 

Similarly to how we in Section \ref{section-simple-hall} introduced the function 
$Z(t):\mathbb{R} \to \mathbb{R}$ whose zeros coincided with the zeros of 
$t \mapsto \zeta(1/2 + it),$ we introduce in Section \ref{section-kap-fem} the function $W(t,\chi):\mathbb{R} \to \mathbb{R}$ whose zeros coincide with the zeros of $t \mapsto L(1/2 + it, \chi).$ The latter implies that all the gaps lying in the interval $[T,2T]$ between consecutive zeros of the function $W(t,\chi)$ are at most $\frac{3\kappa}{\log Q}.$ A simple proof by contradiction\footnote{One needs to be careful close to the endpoints. For strict correctness, the reader is directed to Section \ref{section-kap-fem}.} shows that basically all the gaps lying in the interval $[T,2T]$ between consecutive zeros of the function defined by
\[ f(t,\chi,\kappa) := W(t - \frac{\kappa}{\log Q}, \chi) W(t,\chi) 
W(t + \frac{\kappa}{\log Q}, \chi) \]
are at most $\frac{\kappa}{\log Q}.$

In analogy with Section \ref{section-simple-hall}, we proceed by applying the simplest version of Wirtinger's inequality to $f(t,\chi,\kappa).$ The result is Corollary \ref{wirtinger-ineq-scaled-sum-sum}, which says that 
\begin{align} \label{cor-4-i-inl}
&\displaystyle\sum_{\frac{5Q}{4} < q \leqslant \frac{7Q}{4}} \; \;
\sideset{}{^\flat}\sum_{\chi \textrm{(mod } q)} \;
\int\limits_{t_{1,\chi}}^{t_{N_{\chi},\chi}} f(t,\chi,\kappa)^2 \, dt \nonumber \\
&\leqslant \Big( \frac{\kappa}{\pi \log Q} \Big)^2 
\displaystyle\sum_{\frac{5Q}{4} < q \leqslant \frac{7Q}{4}} \; \;
\sideset{}{^\flat}\sum_{\chi \textrm{(mod } q)} \;
\int\limits_{t_{1,\chi}}^{t_{N_{\chi},\chi}} f'(t,\chi,\kappa)^2 \, dt. 
\end{align} 
Here $t_{1,\chi}$ (which is approximately $T$) and 
$t_{N_{\chi},\chi}$ (which is approximately $2T$) are roughly speaking the first respectively the last zero of $f(t,\chi,\kappa)$ in the interval $[T,2T]$ and we have summed up the resulting inequality over all even primitive Dirichlet characters modulo $q$ and then have also in the outer sum summed over all 
$\frac{5Q}{4} < q \leqslant \frac{7Q}{4}.$

Basically we estimate both sides\footnote{Actually we only find a lower bound for the LHS and an upper bound for the RHS, however, there is no real loss here.} of (\ref{cor-4-i-inl}). However, as is to be expected keeping in mind that our function $f(t,\chi,\kappa)$ depends on $\kappa$, these results are expressions in terms of $\kappa.$ By substituting in these into (\ref{cor-4-i-inl}) we obtain an inequality in terms of $\kappa.$ In Section \ref{Diric-kapitel-conclusion} we note that this inequality requires that $\kappa > 1.18 \times \frac{2\pi}{\log Q},$ as desired.

Much of this text concerns the treatment of both sides of (\ref{cor-4-i-inl}). Very roughly speaking what we want to do here is to be able to evaluate sixth power moments of Dirichlet $L$-functions. No one has yet been successful in proving the sixth power moment of the Riemann zeta-function or Dirichlet $L$-functions. However, Conrey, Iwaniec and Soundararajan \cite{CIS} have found a nice expression for the sixth power moment of Dirichlet $L$-functions, \emph{but} only when one averages over all even (or odd) primitive Dirichlet characters modulo $q$ over a range of values of $q$ and they also need to average over an interval in $t.$ For the statement of their main result, see Theorem \ref{sats1} in Section~\ref{section-cis-artikel}.

Their main result is the starting block for the evaluations in this text. In Section \ref{section-kap-tva} we discuss how to interpret their result and go through some simple consequences of it. In Section \ref{section-kap-tre} we obtain a simpler version of their theorem 
(see Theorem \ref{sats1.3}). This has two purposes. Firstly, the calculations of the expressions in terms of $\kappa$ described above are long and complicated and it is simpler to explain them if we start from a simpler theorem. Secondly, we also want to evaluate sixth power moments with differentiated terms in the integrand. Here it is important to us that we have first simplified some terms featuring in their main theorem (we make some terms independent of the ``shifts" and can therefore treat these as constants during differen-tiation). In Section \ref{Diric-kapitel-differentiating-in-cis} our simplified theorem is used together with Cauchy's integral trick (that is to say we use the expression for our function together with Cauchy's integral formula for the derivative of an analytic function to deduce an expression for the derivative of our function) in order to obtain a differentiated version of our simplified theorem. In Section \ref{Diric-kapitel-investigation} we go through the details of how our previous results in practice enable us to use Corollary~\ref{wirtinger-ineq-scaled-sum-sum} to obtain an inequality in terms of $\kappa.$ Finally, in Section \ref{Diric-kapitel-calculation} we carefully illustrate the simplest of a few similar calculations, whose answers are given in Section \ref{Diric-kapitel-investigation-kappa-coefficients}. 

\subsection{On the article ``The sixth power moment of Dirichlet $L$-functions"} \label{section-cis-artikel}

In this text we will thus make use of the main theorem in the article ``The sixth power moment of Dirichlet $L$-functions", written by Conrey, Iwaniec and Soundararajan (see Theorem 1 in \cite{CIS}). This theorem will be reproduced as Theorem \ref{sats1} in this section. However, first there is some notation to be introduced.

Let $A$ and $B$ be sets of complex numbers, with cardinality equal to $3.$ The elements of these two sets are the so called ``shifts".
Suppose that the modulus of the real part of $\alpha$ and $\beta$ is $< 1/4$ for $\alpha \in A$ and $\beta \in B.$ 
If we let 
\[ \Lambda(s,\chi) := (q/\pi)^{(s-1/2)/2} \Gamma(s/2) L(s,\chi), \] 
then 
\begin{align*}
\Lambda_{A,B}(\chi) 
&:= \displaystyle\prod_{\alpha \in A} \Lambda(1/2 + \alpha,\chi)
\displaystyle\prod_{\beta \in B} \Lambda(1/2 + \beta,\bar{\chi}) \\
&= \Big( \frac{q}{\pi} \Big)^{\delta_{A,B}} 
G_{A,B}
\mathcal{L}_{A,B}(\chi), \nonumber
\end{align*}
where 
\begin{equation*} 
\mathcal{L}_{A,B}(\chi)
:= \displaystyle\prod_{\alpha \in A} L(1/2 + \alpha,\chi)
\displaystyle\prod_{\beta \in B} L(1/2 + \beta,\bar{\chi}), 
\end{equation*} 
\begin{equation*} 
G_{A,B}
:= \displaystyle\prod_{\alpha \in A} 
\Gamma\Big( \frac{1/2 + \alpha}{2} \Big)
\displaystyle\prod_{\beta \in B} 
\Gamma\Big( \frac{1/2 + \beta}{2} \Big) 
\end{equation*}
and
\begin{equation*} 
\delta_{A,B}
:= \frac{1}{2}
\Big( \displaystyle\sum_{\alpha \in A} \alpha + 
\displaystyle\sum_{\beta \in B} \beta \Big).
\end{equation*}
Further, let 
\begin{equation*} 
\mathcal{Z}(A,B)
:= \displaystyle\prod_{\substack{\alpha \in A \\ \beta \in B}} 
\zeta(1 + \alpha + \beta)
\end{equation*}
and
\begin{equation*} 
\mathcal{A}(A,B)
:= \displaystyle\prod_{p} \mathcal{B}_{p}(A,B)\mathcal{Z}_{p}(A,B)^{-1}, 
\end{equation*}
with 
\begin{equation} \label{z-cis-def}
\mathcal{Z}_{p}(A,B)
:= \displaystyle\prod_{\substack{\alpha \in A \\ \beta \in B}} 
\zeta_{p}(1 + \alpha + \beta),
\end{equation}
where
\begin{equation*} 
\zeta_{p}(x)
:= \Big( 1 - \frac{1}{p^{x}} \Big)^{-1}
\end{equation*}
and 
\begin{equation} \label{B-def}
\mathcal{B}_{p}(A,B)
:= \int\limits_{0}^{1} 
\displaystyle\prod_{\alpha \in A} z_{p,\theta}(1/2 + \alpha)
\displaystyle\prod_{\beta \in B} z_{p,-\theta}(1/2 + \beta)
\, d\theta,
\end{equation}
with $z_{p,\theta}(x) := 1/(1 - e(\theta)/p^{x}),$ where 
$e(\theta) := \exp(2\pi i \theta).$
The conditions on the real parts of the elements of $A$ and $B$ ensure that the Euler product for $\mathcal{A}$ converges 
absolutely\footnote{See the proof of Lemma \ref{hjalp-lemma-tre} and in particular (\ref{hjalp-l-tre-referens}).}. 
Let 
\[ \mathcal{B}_{q} := \prod_{p|q} \mathcal{B}_{p}. \] 
Next let
\begin{equation} \label{q-stor}
\mathcal{Q}_{A,B}(q)
:= \displaystyle\sum_{\substack{S \subseteq A \\ T \subseteq B \\ |S| = |T|}} 
\mathcal{Q}(\bar{S} \cup (-T),\bar{T} \cup (-S);q),
\end{equation}
where $\bar{S}$ denotes the complement of $S$ in $A$ and by the set $-S$ we mean $\{ -s : s \in S \},$ and 
\begin{equation} \label{curly-q-def}
\mathcal{Q}(X,Y;q):= \Big( \frac{q}{\pi} \Big)^{\delta_{X,Y}} 
G_{X,Y} \Big( \frac{\mathcal{AZ}}{\mathcal{B}_q} \Big)(X,Y).
\end{equation}
For future need we notice the obvious fact that
\begin{equation} \label{curly-q-symmetry}
\mathcal{Q}(X,Y;q) = \mathcal{Q}(Y,X;q).
\end{equation}
For future convenience we recall three more definitions from \cite{CIS}. Let\footnote{We have here chosen to keep the same notation as in \cite{CIS}, although the usage of $a_{3}(\mathcal{L})$ is confusing in that there is no dependence on ``$\mathcal{L}$".} 
\begin{equation} \label{a3-def}
a_{3}
:= \displaystyle\prod_{p} \Big\{ \Big( 1 - \frac{1}{p} \Big)^4 
\Big( 1 + \frac{4}{p} + \frac{1}{p^2} \Big) \Big\}, 
\end{equation}
\begin{equation*} 
a_{3}(\mathcal{L})
:= \displaystyle\prod_{p} \Big\{ \Big( 1 - \frac{1}{p} \Big)^5 
\Big( 1 + \frac{5}{p} - \frac{5}{p^2} + \frac{14}{p^3} 
- \frac{15}{p^4} + \frac{5}{p^5} + \frac{4}{p^6} 
- \frac{4}{p^7} + \frac{1}{p^8} \Big) \Big\}
\end{equation*}
and
\begin{equation*}
A_{t} := \{ \alpha + t : \alpha \in A \}. 
\end{equation*}

They prove the following:

\begin{theorem}[Main theorem in \cite{CIS}] \label{sats1}
Suppose that $|A| = |B| = 3$ and that $\alpha, \beta \ll 1/\log Q,$ for $\alpha \in A$, $\beta \in B.$ Suppose that $\Psi$ is smooth on $\mathbb{R}$ and compactly supported in $[1,2]$ and $\Phi(t)$ is an entire function of t which decays rapidly as $t \to \infty$ in any fixed horizontal strip. Then
\begin{align} \label{sats1-ekv}
&\sum_{q \geqslant 1} \Psi\Big(\frac{q}{Q}\Big) \int\limits_{-\infty}^{\infty} \Phi(t) \sideset{}{^\flat}\sum_{\chi \textrm{\emph{(mod} } q)} \Lambda_{A_{it},B_{-it}}(\chi) \, dt \nonumber \\
&= \sum_{q \geqslant 1} \Psi\Big(\frac{q}{Q}\Big) \int\limits_{-\infty}^{\infty} \Phi(t) \phi^\flat(q) \mathcal{Q}_{A_{it},B_{-it}}(q) \, dt + O(Q^{7/4+\epsilon}),
\end{align}
uniformly in $\alpha$ and $\beta,$ where the $\sideset{}{^\flat}\sum$ indicates that the sum is restricted to even primitive characters and $\phi^\flat(q)$ denotes the number of primitive even Dirichlet characters modulo $q.$
\end{theorem}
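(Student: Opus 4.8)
The plan is to combine the standard ``recipe'' for moments of $L$-functions with a power-saving treatment of the off-diagonal terms resting on the spectral theory of automorphic forms. As a first step I would expand $\Lambda_{A_{it},B_{-it}}(\chi)$, a product of six completed Dirichlet $L$-functions each of conductor $\asymp q$, by its approximate functional equation. Using the functional equations of the individual factors, this expresses the product as a sum of Dirichlet series indexed — once the swaps are carried out — by pairs of equal-size subsets $S\subseteq A$, $T\subseteq B$, the very index set of the subset sum in (\ref{q-stor}); a typical term is
\[ \sum_{m,\,n\geqslant 1}\frac{\sigma_{\bar{S}\cup(-T)}(m)\,\sigma_{\bar{T}\cup(-S)}(n)}{\sqrt{mn}}\,\chi(m)\,\bar{\chi}(n)\,\Big(\frac{m}{n}\Big)^{\!-it}\,V\!\Big(\frac{mn}{q^{3}}\Big), \]
where $\sigma_{C}(m)=\sum_{m_1m_2m_3=m}m_1^{-c_1}m_2^{-c_2}m_3^{-c_3}$ is the shifted ternary divisor function attached to a triple $C=\{c_1,c_2,c_3\}$, and $V$ is a smooth weight confining $mn$ to $\ll q^{3+\epsilon}$. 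It is technically convenient to carry out the expansion \emph{asymmetrically}, keeping some of the six factors as comparatively short Dirichlet polynomials so as to expose a twisted fourth-moment structure in the rest; this is the device that ultimately makes the off-diagonal tractable.

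Next I would execute the character average. Writing the restricted sum $\sideset{}{^\flat}\sum_{\chi}$ over even primitive characters modulo $q$ by M\"obius inversion over the conductor and then invoking orthogonality of \emph{even} characters, $\sideset{}{^\flat}\sum_{\chi}\chi(m)\bar{\chi}(n)$ collapses to an indicator of $m\equiv\pm n$ modulo divisors $d\mid q$ (the $\pm$ reflecting exactly the restriction to even characters), subject to $(mn,q)=1$. The diagonal part $m=n$ — together with the cross terms between a main and a dual piece, which become diagonal only once the $t$- and $q$-averages are taken — is then a purely multiplicative computation: one shifts contours in the Mellin variables and factors the resulting divisor correlation as an Euler product, reassembling it into $\mathcal{Z}(\bar{S}\cup(-T),\bar{T}\cup(-S))$ times the arithmetic factor $\mathcal{A}(\bar{S}\cup(-T),\bar{T}\cup(-S))$, divided by the ramified factor $\mathcal{B}_{q}=\prod_{p\mid q}\mathcal{B}_{p}$, and multiplied by $G_{X,Y}(q/\pi)^{\delta_{X,Y}}$ — that is, into $\mathcal{Q}(\bar{S}\cup(-T),\bar{T}\cup(-S);q)$ of (\ref{curly-q-def}). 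Hence each pair $(S,T)$ contributes exactly $\phi^{\flat}(q)\,\mathcal{Q}(\bar{S}\cup(-T),\bar{T}\cup(-S);q)$, and summing over $S,T$ and against $\Psi(q/Q)$ and $\int_{-\infty}^{\infty}\Phi(t)\,dt$ reproduces the claimed main term $\sum_{q}\Psi(q/Q)\int\Phi(t)\,\phi^{\flat}(q)\,\mathcal{Q}_{A_{it},B_{-it}}(q)\,dt$; one checks that the freedom in the truncation of the approximate functional equation perturbs this by only $O(Q^{7/4+\epsilon})$.

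The heart of the proof is the off-diagonal $m\equiv\pm n\ (\textrm{mod }d)$ with $m\ne n$. Small $d$ contribute negligibly because of the coprimality conditions and the short M\"obius sum, so one may take $d\gg q^{1-\epsilon}$; writing $m\mp n=dr$ with $r\ne0$ and integrating against $\Phi(t)$ — after opening the divisor functions and applying stationary phase in $t$, which localises $m/n$ near $1$ — reduces the problem to bounding an average, over $q\asymp Q$ (equivalently over the shift $h=dr$) and over $t$, of the shifted ternary divisor correlation $\sum_{n}\sigma_{\bar{S}\cup(-T)}(n)\,\sigma_{\bar{T}\cup(-S)}(n+h)$. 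Bounded trivially, this off-diagonal far exceeds the main term, which is of size $\asymp Q^{2}(\log Q)^{9}$, so genuine cancellation is indispensable; the pointwise ternary additive divisor problem is beyond reach, but the extra averages over $q$ and $t$ supply exactly the slack that is missing. Concretely, I would detect the congruence $m\equiv\pm n\ (\textrm{mod }d)$ by the Duke--Friedlander--Iwaniec delta-symbol method (or, in the ranges where it applies, by the Kuznetsov trace formula), let the smooth part of the resulting expansion merge with the diagonal contribution already found, and express the remainder through incomplete Kloosterman sums whose average I would then control by the Deshouillers--Iwaniec spectral large sieve. The main obstacle is exactly this last step: extracting the cancellation needed to reach $O(Q^{7/4+\epsilon})$, with full uniformity in the shifts $\alpha,\beta\ll 1/\log Q$ and across all the ranges of $m,n,r,d$, which forces one into the sharpest available mean-value bounds for sums of Kloosterman sums and a rather intricate partition of the off-diagonal into sub-ranges. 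A subsidiary obstacle, purely a matter of bookkeeping, is to verify that the swapped diagonal contributions collected along the way reassemble into exactly the closed form $\mathcal{Q}_{A,B}(q)$ of (\ref{q-stor})--(\ref{curly-q-def}), and not merely into something of the same order of magnitude.
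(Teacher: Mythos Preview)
Your broad architecture---approximate functional equation, character orthogonality collapsing $\chi(m)\bar\chi(n)$ to $m\equiv\pm n$, diagonal giving part of the main term, off-diagonal supplying the rest---matches the paper. But the off-diagonal treatment is genuinely different, and this is where your proposal has a gap. The paper (following CIS) does \emph{not} use the delta-symbol, Kuznetsov, or the Deshouillers--Iwaniec spectral large sieve. Instead, writing $q=dr$ with $\mu(d)$ the M\"obius variable, it splits at $d=Q^{1/4}$ into $\mathcal{S}_{A,B}$ and $\mathcal{G}_{A,B}$---both ranges are treated, contrary to your claim that one may restrict to a single range---and detects the divisibility $r\mid(m\pm n)$ by expanding in Dirichlet characters $\psi\pmod r$. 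The principal character $\psi_0$ produces the $|S|=|T|=1$ swap terms, and it is the outer average over $q$ (equivalently over $r$) that makes the non-principal contributions tractable; this is the ``asymptotic large sieve'' mechanism. Your spectral route would instead land on the ternary additive divisor problem $\sum_n\sigma_X(n)\sigma_Y(n+h)$: even with the extra averages, this is not known to give $O(Q^{7/4+\epsilon})$, since the spectral machinery handles $d_2\times d_2$ but not $d_3\times d_3$, and your asymmetric reduction to a twisted fourth moment would require twists of aggregate length $\asymp q$, beyond current reach. The CIS point is precisely that the character expansion plus $q$-average \emph{circumvents} the ternary additive divisor problem.

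Two smaller structural discrepancies. First, the paper's AFE is a single two-term identity $H(0)\Lambda_{A,B}(\chi)=\Lambda^0_{A,B}(\chi)+\Lambda^0_{-B,-A}(\chi)$, not a twenty-piece recipe expansion; the full $(S,T)$ indexing emerges only \emph{after} the diagonal and off-diagonal are analysed, with the diagonal of $\Lambda^0_{A,B}$ and $\Lambda^0_{-B,-A}$ giving the $S=T=\varnothing$ and $S=A,\,T=B$ terms respectively, and the off-diagonal producing the remaining eighteen. Second, you omit the final step the paper carries out explicitly: one first obtains (\ref{sats1-ekv}) multiplied through by $H(0)$, then shows $\mathcal{Q}_{A,B}(q)$ is analytic in the shifts via the CFKRS contour representation (\ref{cfkrs-repr}), and removes $H(0)$ by applying Cauchy's integral formula on polydiscs of radii $\asymp 1/\log Q$ chosen so that $1/H(0)\ll(\log Q)^{54}$ on the distinguished boundary.
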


\begin{remark}
Note that here the ``big oh"-constant may depend on $\Psi$ and $\Phi.$
\end{remark}

\textbf{Brief sketch of proof:}
For a complete proof of Theorem \ref{sats1}, the reader is referred to \cite{CIS}. Below is a brief sketch of the proof. We will here assume that 
$q > 1,$ which can basically be justified by recalling that $\Psi$ is compactly supported in $[1,2].$

Consider 
\begin{equation} \label{cis-lambda-icke-noll}
\frac{1}{2\pi i} \int_{(1)} 
\Lambda_{A_{s},B_{s}} (\chi) \frac{H(s)}{s} \, ds,
\end{equation}
where 
\[ H(s) := \prod_{\substack{\alpha \in A \\ \beta \in B}} 
\Big( s^2 -\Big( \frac{\alpha + \beta}{2} \Big)^2 \Big)^3. \]
Expanding the $L$-functions into their Dirichlet series we see that 
(\ref{cis-lambda-icke-noll}) is
\begin{equation*}
\Lambda_{A,B}^{0} (\chi) := \Big( \frac{q}{\pi} 
\Big)^{\delta_{A,B}} \sum_{m,n = 1}^{\infty} 
\frac{\sigma_{-A}(m)\sigma_{-B}(n)}{\sqrt{mn}} \chi(m) \overline{\chi}(n) 
W_{A,B}(mn \pi^3 / q^3),
\end{equation*}
where the generalised sum-of-divisors function is defined by 
\begin{equation*}
\sigma_{s_1,...,s_R}(m) := \sum_{m = m_1 \cdots m_R} 
m_{1}^{s_1} \cdots m_{R}^{s_R}
\end{equation*}
and where
\begin{equation*}
W_{A,B}(\xi) := \frac{1}{2\pi} \int_{(1)} 
G_{A_{s},B_{s}} \xi^{-s} \frac{H(s)}{s} \, ds.
\end{equation*}
For future need, we here note that
\begin{equation} \label{cis-divisor-property-ett}
\sigma_{A_{s}}(n) = \sigma_{A}(n) n^s.
\end{equation}
Also,  
\begin{equation*}
\sum_{g = 0}^{\infty} \frac{\sigma_{-A}(p^g)\sigma_{-B}(p^g)}{p^g} 
= \mathcal{B}_{p}(A,B),
\end{equation*}
which tells us that for $\mathrm{Re}(\alpha), \mathrm{Re}(\beta) > 0$ we have 
\begin{equation} \label{cis-divisor-property-tva}
\sum_{(n,q) = 1} \frac{\sigma_{-A}(n)\sigma_{-B}(n)}{n} 
= \Big(\frac{\mathcal{AZ}}{\mathcal{B}_q}\Big)(A,B).
\end{equation}

From the usual functional equation for Dirichlet $L$-functions it follows that
\begin{equation} \label{FE-Lambda}
\Lambda_{A,B} (\chi) = \Lambda_{-B,-A} (\chi).
\end{equation}
Returning to (\ref{cis-lambda-icke-noll}), we now move the line of integration to $\textrm{Re}(s) = -1$ and, by using (\ref{FE-Lambda}), obtain (recall that 
$q > 1$) that
\begin{equation} \label{cis-AFE}
H(0)\Lambda_{A,B} (\chi)= \Lambda_{A,B}^{0} (\chi) + \Lambda_{-B,-A}^{0} (\chi).
\end{equation}

If $\Phi$ is as in the statement of Theorem \ref{sats1}, then we have 
\begin{align*}
\Lambda_{A,B}^{1} (\chi) &:= \int_{-\infty}^{\infty} \Phi(t) 
\Lambda_{A_{it},B_{-it}}^{0} (\chi) \, dt \\
&=\sum_{m,n = 1}^{\infty} \frac{\sigma_{-A}(m)\sigma_{-B}(n)}{\sqrt{mn}} \chi(m) \overline{\chi}(n) V_{A,B}(m,n;q), 
\end{align*}
where 
\begin{equation} \label{cis-V-def}
V_{A,B}(\xi,\eta;\mu) := \Big(\frac{\mu}{\pi}\Big)^{\delta_{A,B}}
\int_{-\infty}^{\infty} \Phi(t) (\xi/\eta)^{-it} 
W_{A_{it},B_{-it}}(\xi \eta \pi^3/\mu^3) \, dt.
\end{equation}

We will want to study asymptotically
\begin{equation*} 
\mathcal{I}_{A,B} = \mathcal{I}_{A,B}(\Psi, \Phi, Q) := H(0) 
\sum_{q = 1}^{\infty} \sideset{}{^\flat}\sum_{\chi \textrm{(mod } q)} \Psi(q/Q) \int_{-\infty}^{\infty} \Phi(t) 
\Lambda_{A_{it},B_{-it}} (\chi) \, dt,
\end{equation*}
where of course $\Psi$ is a fixed function satisfying the criteria in the statement of Theorem \ref{sats1}. Using (\ref{cis-AFE}) we obtain
\begin{equation} \label{CIS-I-och-Delta-samband}
\mathcal{I}_{A,B} = \Delta_{A,B} + \Delta_{-B,-A},
\end{equation}
where
\begin{align} \label{cis-Delta}
\Delta_{A,B} &:= \sum_{q = 1}^{\infty} 
\sideset{}{^\flat}\sum_{\chi \textrm{(mod } q)} 
\Psi(q/Q) \Lambda_{A,B}^{1} (\chi) \nonumber \\
&=\sum_{q = 1}^{\infty} \sideset{}{^\flat}\sum_{\chi \textrm{(mod } q)} \Psi(q/Q) \sum_{m,n = 1}^{\infty} \frac{\sigma_{-A}(m)\sigma_{-B}(n)}{\sqrt{mn}} \chi(m) \overline{\chi}(n) V_{A,B}(m,n;q).
\end{align}

Let us next study $\Delta_{A,B}.$ There is a diagonal contribution coming from the terms $m = n$, which we call $\mathcal{D}_{A,B}.$ Clearly 
\begin{equation} \label{cis-diagonal-ett} 
\mathcal{D}_{A,B} = 
\sum_{\substack{n,q \\ (n,q)=1}}
\frac{\sigma_{-A}(n)\sigma_{-B}(n)}{n} \phi^{\flat}(q) 
\Psi\Big( \frac{q}{Q} \Big) 
V_{A,B}(n,n;q).
\end{equation} 
Recalling (\ref{cis-V-def}) and also (\ref{cis-divisor-property-ett}) and 
(\ref{cis-divisor-property-tva}), we see that (\ref{cis-diagonal-ett}) equals 
\begin{align} \label{CIS-diag-mellansteg}
&\sum_{q = 1}^{\infty} \Big\{ \phi^{\flat}(q) 
\Psi\Big( \frac{q}{Q} \Big) \\
&\times \int_{-\infty}^{\infty} \Phi(t) \frac{1}{2\pi i} 
\int_{(1)} G_{A_{s+it},B_{s-it}} (q/\pi)^{3s + \delta_{A,B}} 
\Big(\frac{\mathcal{AZ}}{\mathcal{B}_q}\Big)(A_{s},B_{s}) 
\frac{H(s)}{s} \, ds \, dt \Big\}. \nonumber
\end{align}
By moving the line of integration to $\mathrm{Re}(s) = -1/2 + \epsilon$ and noting that the integrand has a pole at $s = 0,$ one may obtain that (\ref{CIS-diag-mellansteg}) equals 
\begin{align*}
&H(0) \sum_{q = 1}^{\infty}  
\Psi\Big( \frac{q}{Q} \Big) \phi^{\flat}(q) 
(q/\pi)^{\delta_{A,B}} \Big(\frac{\mathcal{AZ}}{\mathcal{B}_q}\Big)(A,B) 
\int_{-\infty}^{\infty} \Phi(t) G_{A_{it},B_{-it}} dt 
+ O(Q^{3/2 + \epsilon}) \\
&= H(0) \sum_{q = 1}^{\infty}  
\Psi\Big( \frac{q}{Q} \Big) \phi^{\flat}(q) 
\int_{-\infty}^{\infty} \Phi(t) \mathcal{Q}(A_{it},B_{-it};q) \, dt 
+ O(Q^{3/2 + \epsilon}).
\end{align*}

We shall see that, as one may guess at this point, $\mathcal{D}_{A,B}$ does indeed lead to the term corresponding to $S = T = \varnothing$ in (\ref{q-stor}). Similarly $\mathcal{D}_{-B,-A}$ leads to the term corresponding to $S = A,$ $T = B.$ The other terms in (\ref{q-stor}) arise from the non-diagonal contribution.

Consider once again the expression in (\ref{cis-Delta}). Start by looking at the sum over $\chi.$ 
If $(mn,q) = 1$ then
\begin{equation*}
\sideset{}{^\flat}\sum_{\chi \textrm{(mod } q)} \chi(m) \overline{\chi}(n) = \sideset{}{^*}\sum_{\chi \textrm{(mod } q)} \frac{1 + \chi(-1)}{2} \chi(m) \overline{\chi}(n) = \frac{1}{2} \sum_{\substack{q = dr \\ r | (m \pm n)}} 
\mu(d) \varphi(r),
\end{equation*}
where we by $\pm$ mean 
\begin{equation*} 
\sum_{\pm} = \sum_{+} + \sum_{-}. 
\end{equation*} 
Thus we obtain that 
\begin{equation} \label{cis-Delta-uttryck}
\Delta_{A,B} = \frac{1}{2} \sum_{m,n = 1}^{\infty} \frac{\sigma_{-A}(m)\sigma_{-B}(n)}{\sqrt{mn}} 
\sum_{\substack{d,r \\ (dr,mn) = 1 \\ r | (m \pm n)}} 
\varphi(r) \mu(d) \Psi\Big( \frac{dr}{Q} \Big) 
V_{A,B}(m,n;dr).
\end{equation}

For the terms $m \neq n,$ we divide the sum in (\ref{cis-Delta-uttryck}) into two parts, namely 
$\mathcal{S}_{A,B}$ corresponding to $d > D$ and $\mathcal{G}_{A,B}$ corresponding to $d \leqslant D.$ A suitable choice turns out to be $D = Q^{1/4}.$ We thus have 
\begin{equation*}
\Delta_{A,B} = \mathcal{D}_{A,B} + \mathcal{S}_{A,B} + \mathcal{G}_{A,B}.
\end{equation*}

When investigating $\mathcal{S}_{A,B}$ and $\mathcal{G}_{A,B},$ Conrey, Iwaniec and Soundararajan express the condition $r | (m \pm n)$ (or a similar condition) in terms of characters $\psi \textrm{(mod } r)$ and then single out the contribution coming from the principal character $\psi = \psi_{0}.$ Overall the treatment of the off-diagonal contribution is more complicated than the diagonal one. It was a nice achievement by the three authors named above to be able to handle these terms as well, especially considering the fact that the off-diagonal terms actually give a contribution to the main term in our answer. We will below state the outcome of their investigation. Given $\alpha \in A$ and 
$\beta \in B,$ let us define 
\begin{equation*}
A^{*}(\alpha, \beta) := (A - \{\alpha\}) \cup \{-\beta\}.
\end{equation*}
Then we may explicitly write down what their method yields as follows:
\begin{align} \label{CIS-G-och-S}
&\mathcal{G}_{A,B} + \mathcal{S}_{A,B} = \\
&=H(0) 
\sum_{\substack{\alpha \in A \\ \beta \in B}} \sum_{q = 1}^{\infty} 
\Psi\Big( \frac{q}{Q} \Big) \phi^{\flat}(q)
\int_{-\infty}^{\infty} \Phi(t) 
\mathcal{Q}(A^{*}(\alpha,\beta)_{it},B^{*}(\alpha,\beta)_{-it};q) \, dt \nonumber \\
&+ O(Q^{7/4 + \epsilon}). \nonumber
\end{align}
Of course we may find a similar expression to (\ref{CIS-G-och-S}) for $\mathcal{G}_{-B,-A} + \mathcal{S}_{-B,-A}.$ Remembering (\ref{curly-q-symmetry}), we put these two together with the diagonal contribution and conclude that
\begin{align*}
&\mathcal{D}_{A,B} + \mathcal{S}_{A,B} + \mathcal{G}_{A,B} + 
\mathcal{D}_{-B,-A} + \mathcal{S}_{-B,-A} + \mathcal{G}_{-B,-A} \\
&= H(0) \sum_{q = 1}^{\infty} 
\Psi\Big( \frac{q}{Q} \Big) \phi^{\flat}(q)
\int_{-\infty}^{\infty} \Phi(t) 
\mathcal{Q}_{A_{it},B_{-it}}(q) \, dt 
+ O(Q^{7/4 + \epsilon}).
\end{align*}
By using (\ref{CIS-I-och-Delta-samband}), we establish that 
\begin{align} \label{cis-nastan-klar}
&H(0) \sum_{q = 1}^{\infty} \sideset{}{^\flat}\sum_{\chi \textrm{(mod } q)} \Psi(q/Q) \int_{-\infty}^{\infty} \Phi(t) 
\Lambda_{A_{it},B_{-it}} (\chi) \, dt \nonumber \\
&= H(0) \sum_{q = 1}^{\infty} 
\Psi\Big( \frac{q}{Q} \Big) \phi^{\flat}(q)
\int_{-\infty}^{\infty} \Phi(t) 
\mathcal{Q}_{A_{it},B_{-it}}(q) \, dt 
+ O(Q^{7/4 + \epsilon}).
\end{align}

We would like to divide through by $H(0)$ in (\ref{cis-nastan-klar}), however, there is a conceivable problem when $\alpha + \beta = 0$ for $\alpha \in A$ and $\beta \in B.$ To tackle this difficulty, our first step will be to show that $\mathcal{Q}_{A,B}(q)$ is an analytic function of the shifts. The problem is where any of the relevant $\mathcal{Z}(X,Y)$ has a pole. However, we will now show that these singularities must be removable. To do this, we use Lemma 2.5.1 in the article \cite{CFKRS} by Conrey, Farmer, Keating, Rubinstein and Snaith. In the notation used in the latter we take $k = 3,$ $f(s) = 1/s$ and 
\begin{equation*}
K(\alpha_1, \alpha_2, \alpha_3; \alpha_4, \alpha_5, \alpha_6) 
= \mathcal{Q}(\{\alpha_1,\alpha_2,\alpha_3\}, \{-\alpha_4,-\alpha_5,-\alpha_6\};q). 
\end{equation*}
We find that 
\begin{equation} \label{cfkrs-sambandet}
\mathcal{Q}_{\{\alpha_1,\alpha_2,\alpha_3\},\{-\alpha_4,-\alpha_5,-\alpha_6\}}(q) 
= \sum_{\sigma \in \Xi} K(\alpha_{\sigma(1)},\alpha_{\sigma(2)},\alpha_{\sigma(2)}; \alpha_{\sigma(4)},\alpha_{\sigma(5)},\alpha_{\sigma(6)}),
\end{equation}
where $\Xi$ is the set of $\binom{6}{3}$ permutations of $\sigma \in S_{6}$ 
such that 
$\sigma(1) < \sigma(2) < \sigma(3)$ and $\sigma(4) < \sigma(5) < \sigma(6).$
Now suppose that 
\begin{equation} \label{cfkrs-villkor}
\alpha_i \neq \alpha_j, \textrm{for } i \neq j.
\end{equation} 
Then we may use the following formula from \cite{CFKRS}:
\begin{align} \label{cfkrs-repr}
&\sum_{\sigma \in \Xi} K(\alpha_{\sigma(1)},\alpha_{\sigma(2)},\alpha_{\sigma(3)}; \alpha_{\sigma(4)},\alpha_{\sigma(5)},\alpha_{\sigma(6)}) \nonumber \\
&= \frac{-1}{(3!)^2} \frac{1}{(2\pi i)^6} \oint \cdots \oint 
\frac{K(z_1,z_2,z_3;z_4,z_5,z_6)\Delta(z_1,...,z_6)^2}
{\prod_{i=1}^{6} \prod_{j=1}^{6} (z_i - \alpha_j)} \, dz_1 ... dz_6,
\end{align} 
where
\begin{equation*}
\Delta(z_1,...,z_6) := \displaystyle\prod_{1 \leqslant i < j \leqslant 6} (z_j - z_i),
\end{equation*}
and where one integrates about (small) circles enclosing the $\alpha_j$'s.
By choosing the radii of the circles to be $C_i/\log Q \, (i = 1,...,6)$, for any suitably large constants $C_i,$ we may obtain an upper bound (depending on $Q$) for the RHS of (\ref{cfkrs-repr}). By recalling (\ref{cfkrs-sambandet}), we therefore see that the function $\mathcal{Q}_{\{\alpha_1,\alpha_2,\alpha_3\}, \{-\alpha_4,-\alpha_5,-\alpha_6\}}(q)$ remains bounded whenever (\ref{cfkrs-villkor}) is satisfied. This allows us to conclude\footnote{By Riemann's Extension Theorem --- see for example \cite{GR-complex-an}.} that the possible singularities must be removable. We may conclude that 
$\mathcal{Q}_{A,B}(q)$ is an analytic function of the shifts. Let us for future need remark here that it obviously follows that $\mathcal{Q}_{A_{it},B_{-it}}(q)$ is an analytic function of the shifts.

Now let $\alpha_1, \alpha_2, \alpha_3$ denote the shifts in $A$ and let 
$\alpha_4, \alpha_5, \alpha_6$ denote the shifts in $B$ and let us now return to (\ref{cis-nastan-klar}) by rewriting it as follows: 
\begin{equation} \label{cis-cfkrs}
H(0)l(\alpha_1, \alpha_2, \alpha_3, \alpha_4, \alpha_5, \alpha_6) = 
H(0)r(\alpha_1, \alpha_2, \alpha_3, \alpha_4, \alpha_5, \alpha_6) 
+ O(Q^{7/4 + \epsilon}),
\end{equation}
where we for future convenience recall that $H(0)$ by definition clearly is a function of the shifts.
From uniform convergence of the relevant integrals (given any value of $Q$) one may deduce that 
$l(\alpha_1, \alpha_2, \alpha_3, \alpha_4, \alpha_5, \alpha_6)$ is an analytic function of the shifts. As remarked above, $\mathcal{Q}_{A_{it},B_{-it}}(q)$ is an analytic function of the shifts and hence in particular it is bounded. This implies that 
$r(\alpha_1, \alpha_2, \alpha_3, \alpha_4, \alpha_5, \alpha_6)$ is an analytic function of the shifts. 

Thus $(l-r)(\alpha_1, \alpha_2, \alpha_3, \alpha_4, \alpha_5, \alpha_6)$ is an analytic function of the shifts for $\alpha_i \ll 1/\log Q,$ $i = 1,...,6.$ Now consider any $\alpha_i \leqslant C/\log Q,$ $i = 1,...,6.$ We may apply Cauchy's integral formula to the above function in order to obtain that
\begin{align} \label{cauchy-steg-7}
&(l-r)(\alpha_1, \alpha_2, \alpha_3, \alpha_4, \alpha_5, \alpha_6) 
\nonumber \\
&=\frac{1}{(2\pi i)^6} \int \cdots \int 
\int_{\partial D_{1} \times \cdots \times \partial D_{6}}
\frac{(l-r)(\beta_1, \beta_2, \beta_3, \beta_4, \beta_5, \beta_6)}
{(\beta_{1} - \alpha_{1}) \cdots (\beta_{6} - \alpha_{6})} 
\, d\beta_{1} ... d\beta_{6},
\end{align}
where $D = \prod_{i = 1}^{6} D_{i}$ (i.e.\ $D$ is the polydisc defined as the Cartesian product of the open discs $D_{i}$), where 
\[D_{i} := \{s \in \mathbb{C} : |s - \alpha_{i}| < r_{i}\}, \]
with  
\begin{equation*}  
r_{i} = \frac{2^{i+1}C}{\log Q}. 
\end{equation*}
By this construction we find that 
\begin{equation*}
|\beta_{i} + \beta_{j}| \geqslant 2C/\log Q, \textrm{ for any } i \ne j
\end{equation*} 
and in particular
\begin{equation*}
1/H(0) \ll \log^{54}{Q}.
\end{equation*}
Using this together with (\ref{cis-cfkrs}), we deduce that the numerator in the integrand in (\ref{cauchy-steg-7}) is
\begin{equation*}
\ll Q^{7/4+\epsilon} \log^{54}{Q} \ll Q^{7/4 + \epsilon'}.
\end{equation*}
By estimating the integral in (\ref{cauchy-steg-7}) trivially, we finally obtain that 
\begin{equation*}
l(\alpha_1, \alpha_2, \alpha_3, \alpha_4, \alpha_5, \alpha_6)
= r(\alpha_1, \alpha_2, \alpha_3, \alpha_4, \alpha_5, \alpha_6) 
+ O(Q^{7/4 + \epsilon'}),
\end{equation*}
which completes our sketch of the proof (by ``rewriting" $\epsilon'$ as 
$\epsilon$). \qedsymbol

We now make the following definition here:

\begin{definition} \label{reasonable-def}
Let $N_1,$ $N_2$ and $N_3$ be large positive real numbers. $\Psi(t)$ will be said to be reasonable if it is smooth, is compactly supported in $[1,2]$ and if we also have that $\Psi(t) \leqslant N_1$ and $\Psi'(t) \leqslant N_2$ for $t \in [1,2]$. $\Phi(z)$ will be said to be reasonable if it is an entire function which decays rapidly as $|z| \to \infty$ in any fixed horizontal strip and if $\Phi(t) \leqslant N_3 \exp(-t^2)$ for $t \in \mathbb{R}.$ 
\end{definition}

\begin{remark}
Notice that if $\Psi$ and $\Phi$ are reasonable then they satisfy the conditions in Theorem \ref{sats1}.
\end{remark}

\begin{remark} \label{remark-reasonable}
In this text it will from here on always be assumed that $\Psi$ and $\Phi$ are reasonable.
\end{remark}

\section{Interpretation of Theorem \ref{sats1}} \label{section-kap-tva}

Our first step towards understanding what Theorem \ref{sats1} says will be to study $\mathcal{Q}_{A_{it},B_{-it}}(q).$ In particular, looking at (\ref{q-stor}) and (\ref{curly-q-def}), we see the importance of examining 
\begin{equation} \label{ett-tre}
G_{X_{it},Y_{-it}}, 
\end{equation}
\begin{equation} \label{tva-tre}
\mathcal{B}_{p}(X_{it},Y_{-it}), 
\end{equation}
\begin{equation} \label{tva-och-en-halv-tre}
\mathcal{Z}_{p}^{-1}(X_{it},Y_{-it}) 
\end{equation}
and
\begin{equation} \label{tre-tre}
\mathcal{A}(X_{it},Y_{-it}).
\end{equation}
Clearly (\ref{ett-tre}) depends on $t$ and is equal to 
\begin{equation} \label{gamma-uttryck}
\Big|\Gamma \Big( \frac{1}{4} + \frac{it}{2} \Big)\Big|^6 
\end{equation}
when all the shifts in the sets $X$ and $Y$ are zero.
Let us denote the shifts in $X$ by $\alpha_1, \alpha_2$ and $\alpha_3$ and the shifts in $Y$ by $\alpha_4, \alpha_5$ and $\alpha_6.$ Then by looking at the definition given in (\ref{B-def}) we observe that $\mathcal{B}_{q}(X_{it},Y_{-it})$ is equal to the
$e(0 \cdot \theta)$-coefficient in the following product:
\[ \Big(1 + \frac{e(\theta)}{p^{1/2 + it + \alpha_1}} 
+ \frac{e(2\theta)}{p^{1 + 2it + 2\alpha_1}} +...\Big) \times ... \times 
\Big(1 + \frac{e(-\theta)}{p^{1/2 - it + \alpha_6}} 
+ \frac{e(-2\theta)}{p^{1 - 2it + 2\alpha_6}} +...\Big). \] 
This observation immediately tells us that $\mathcal{B}_{q}(X_{it},Y_{-it})$ is independent of $t.$ Moreover, a straight-forward calculation shows that when all the shifts in the sets $X$ and $Y$ are zero, then (\ref{tva-tre}) equals 
\begin{equation} \label{no-shifts-ett}
\Big( 1 + \frac{4}{p} + \frac{1}{p^2} \Big) 
\Big( 1 - \frac{1}{p} \Big)^{-5}. 
\end{equation}
By going back to (\ref{z-cis-def}) we see that (\ref{tva-och-en-halv-tre}) clearly is independent of $t$ and when all the shifts in the sets $X$ and $Y$ are zero then (\ref{tva-och-en-halv-tre}) equals 
\begin{equation} \label{no-shifts-tva}
\Big( 1 - \frac{1}{p} \Big)^{9}.
\end{equation}
Thus $\mathcal{A}(X_{it},Y_{-it})$ is independent of $t$ and (\ref{tre-tre}) equals
\begin{equation} \label{no-shifts-tre}
\prod_{p} \Big\{ \Big( 1 + \frac{4}{p} + \frac{1}{p^2} \Big) 
\Big( 1 - \frac{1}{p} \Big)^{4} \Big\} \; \; \; (=a_3)
\end{equation}
when all the shifts in the sets $X$ and $Y$ are zero.

Our next goal will be to explore what Theorem \ref{sats1} says when all the shifts are zero. 

\begin{corollary}[Corollary to Theorem 2] \label{cor1}
Letting all the shifts in Theorem \ref{sats1} equal zero, we obtain that 
\begin{align} \label{cor1-ekv}
&\sum_{q \geqslant 1} \; \sideset{}{^\flat}\sum_{\chi \textrm{\emph{(mod} } q)} \Psi\Big(\frac{q}{Q}\Big) \int\limits_{-\infty}^{\infty} 
\Phi(t) |L(1/2+it,\chi)|^6 |\Gamma((1/2+it)/2)|^6 \, dt \nonumber \\
&= a_3 \sum_{q \geqslant 1} \Psi\Big(\frac{q}{Q}\Big) 
\prod_{p|q} \frac{(1-\frac{1}{p})^5}{(1+\frac{4}{p}+\frac{1}{p^2})} 
\phi^\flat(q) \Big(\frac{42\log^9 q}{9!} + O\big(\log^8 q\big)\Big) \nonumber \\ 
&\times \int\limits_{-\infty}^{\infty} \Phi(t) 
|\Gamma((1/2+it)/2)|^6 \, dt \nonumber \\
&= \frac{42 a_3(\mathcal{L})}{9!} Q^2 \log^9 Q 
\int\limits_{0}^{\infty} \frac{\Psi(x) x}{2} \, dx \int\limits_{-\infty}^{\infty} \Phi(t) 
|\Gamma((1/2+it)/2)|^6 \, dt \nonumber \\
&+ O\big(Q^2 \log^8 Q\big).
\end{align}
\end{corollary}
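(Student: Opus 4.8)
The plan is to specialise Theorem~\ref{sats1} to $A=B=\{0,0,0\}$ and then to carry out an explicit mean value evaluation over $q$; no input beyond Theorem~\ref{sats1}, the local computations of Section~\ref{section-kap-tva}, and standard Dirichlet-series bookkeeping is needed. Since the statement (\ref{sats1-ekv}) already has $H(0)$ divided out and is valid for all shifts $\ll 1/\log Q$, we may simply set all six shifts to $0$. On the left we then have $A_{it}=\{it,it,it\}$ and $B_{-it}=\{-it,-it,-it\}$, so $\Lambda_{A_{it},B_{-it}}(\chi)=\Lambda(\tfrac12+it,\chi)^3\Lambda(\tfrac12-it,\bar\chi)^3$; the factors $(q/\pi)^{\pm it/2}$ cancel, and because $L(\tfrac12-it,\bar\chi)=\overline{L(\tfrac12+it,\chi)}$ and $\Gamma(\tfrac14-\tfrac{it}{2})=\overline{\Gamma(\tfrac14+\tfrac{it}{2})}$ for real $t$, the integrand becomes $|L(\tfrac12+it,\chi)|^6\,|\Gamma((\tfrac12+it)/2)|^6$, which is the left side of (\ref{cor1-ekv}).

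The substance is the evaluation of $\mathcal{Q}_{A_{it},B_{-it}}(q)$ at zero shifts. The individual summands in (\ref{q-stor}) are singular there — the factors $\zeta(1+\alpha+\beta)$ inside $\mathcal{Z}$ degenerate to $\zeta(1)$ — so I would instead use the contour-integral representation (\ref{cfkrs-repr})--(\ref{cfkrs-sambandet}) from the proof sketch, with $K(z_1,\dots,z_6)=\mathcal{Q}(\{z_1,z_2,z_3\},\{-z_4,-z_5,-z_6\};q)$ and all $\alpha_j$ equal to $it$ (legitimate as a limit of distinct perturbations $\alpha_j=it+\varepsilon_j$, $\varepsilon_j\to0$, using the analyticity established before Definition~\ref{reasonable-def}). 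In $K=(q/\pi)^{\delta}G\cdot(\mathcal{AZ}/\mathcal{B}_q)$ the only factor singular near the common point of the $z_j$ is $\mathcal{Z}$, which carries $9$ potential simple poles; expanding $\zeta(1+w)=w^{-1}+\gamma+\cdots$, writing $(q/\pi)^{\delta}=\exp(\delta\log(q/\pi))$ with $\delta$ a linear form in the $z_j$, and performing the six contour integrals, one obtains a polynomial in $\log q$ of degree exactly $9$. Its top-degree part is the familiar CFKRS moment-polynomial leading term: $\mathcal{A}$ contributes $a_3$ by (\ref{no-shifts-tre}), $\mathcal{B}_q$ contributes $\prod_{p\mid q}(1-\tfrac1p)^{-5}(1+\tfrac4p+\tfrac1{p^2})$ by (\ref{no-shifts-ett}) and (\ref{no-shifts-tva}), $G$ contributes $|\Gamma((\tfrac12+it)/2)|^6$ by (\ref{gamma-uttryck}), and the $(q/\pi)^{\delta}$-factor together with the residue of the product of $\zeta$-factors produces the universal constant $\tfrac{42}{9!}$ times $(\log q)^9$; all lower-order terms collect into $O(\log^8 q)$, uniformly for $q$ in the support of $\Psi(\cdot/Q)$. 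Pulling the $t$-independent pieces out of the $\Phi(t)$-integral then gives the first equality in (\ref{cor1-ekv}).

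It remains to evaluate $\sum_{q\geqslant1}\Psi(q/Q)\prod_{p\mid q}\tfrac{(1-1/p)^5}{1+4/p+1/p^2}\,\phi^\flat(q)$. Replacing $\log^9 q=\log^9 Q+O(\log^8 Q)$ for $q$ in the support (which costs only $O(Q^2\log^8 Q)$ after summation) reduces us to this sum without the $\log^9 q$. Now $\phi^\flat(q)=\tfrac12 J(q)+E(q)$, where $J(q)=\sum_{d\mid q}\mu(q/d)\varphi(d)$ is the multiplicative function with value $p-2$ at $p$ and $p^{k-2}(p-1)^2$ at $p^k$ ($k\geqslant2$), and $E(q)=\tfrac12\sum_{\chi\text{ prim}}\chi(-1)$ is $O(q^\epsilon)$ and contributes only $O(Q^{1+\epsilon})$ to the sum; so up to negligible error the summand is $\tfrac12$ times a multiplicative function $g(q)=\prod_{p\mid q}\tfrac{(1-1/p)^5}{1+4/p+1/p^2}\,J(q)$. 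Forming $\sum_q g(q)q^{-s}$, factoring out $\zeta(s-1)$, one finds a simple pole at $s=2$; evaluating the remaining Euler product at $s=2$ and comparing with (\ref{a3-def}) and the definition of $a_3(\mathcal{L})$ — the relevant identity being, with $u=1/p$,
\[
1+4u+u^2+u(1-u-u^2)(1-u)^5 = 1+5u-5u^2+14u^3-15u^4+5u^5+4u^6-4u^7+u^8
\]
— shows the residue equals $a_3(\mathcal{L})/a_3$. A Mellin inversion then gives $\sum_q \Psi(q/Q)g(q)=\big(a_3(\mathcal{L})/a_3\big)Q^2\int_0^\infty\Psi(x)x\,dx+O(Q^{2-\delta})$, and inserting this (multiplied by the explicit factor $\tfrac{a_3}{2}\cdot\tfrac{42}{9!}\log^9 Q$ and by $\int\Phi(t)|\Gamma((\tfrac12+it)/2)|^6\,dt$) into the middle line of (\ref{cor1-ekv}) yields its second equality, with error $O(Q^2\log^8 Q)$.

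I expect the main obstacle to be the degree-$9$ residue extraction of the second paragraph: checking that the leading-in-$\log q$ part of the contour integral is precisely $\tfrac{42}{9!}(\log q)^9\,a_3\prod_{p\mid q}(\cdots)\,|\Gamma((\tfrac12+it)/2)|^6$ and that everything else is $O(\log^8 q)$. The cleanest way to avoid a brute-force calculation is to appeal to the known form of the CFKRS recipe for the sixth moment (of which Theorem~\ref{sats1} is the rigorous averaged instance): the moment polynomial for $k=3$ has leading term $g_3\,a_3\,(\log)^9/9!$ with geometric constant $g_3=42$, and passing from $a_3$ to $a_3(\mathcal{L})$ is exactly the insertion of $\prod_{p\mid q}(1-1/p)^5/(1+4/p+1/p^2)$ together with the $q\asymp Q$ average of $\phi^\flat(q)$, which is the polynomial identity above. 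The remaining ingredients — the cancellation of $(q/\pi)^{\pm it/2}$ on the left, the replacement $\log q=\log Q+O(1)$, and the standard mean value $\sum_q\Psi(q/Q)g(q)\sim(\operatorname{Res}_{s=2})\,Q^2\int_0^\infty\Psi(x)x\,dx$ — are routine.
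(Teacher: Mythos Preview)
Your proposal is correct and follows essentially the same two-step strategy as the paper: specialise Theorem~\ref{sats1} to zero shifts and extract the leading $\tfrac{42}{9!}\log^9 q$ term, then evaluate the weighted $q$-sum via a Dirichlet series whose Euler product collapses to $a_3(\mathcal{L})/a_3$ (your polynomial identity in $u=1/p$ is exactly what the paper uses implicitly). The execution differs only in standard choices: for the first equality the paper perturbs to $A=B=\{\delta,2\delta,4\delta\}$ and reads off the $\delta^0$-coefficient directly (picking the $\delta^9$-term from $(q/\pi)^{\delta_{X,Y}}$ to get $42\log^9 q/9!$) rather than invoking the CFKRS contour integral and the known $g_3=42$; for the second equality the paper uses Perron's formula (Titchmarsh, Lemma~3.12) plus partial summation in place of your Mellin inversion. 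Both routes are equivalent, though the paper's $\delta$-limit is more self-contained since it does not cite the CFKRS leading constant.
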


\begin{proof}
We apply Theorem 2 with $A = \{\delta,2\delta,4\delta\}$ and 
$B = \{\delta,2\delta,4\delta\}$ and then let $\delta \to 0.$ The $\mathcal{Q}_{A_{it},B_{-it}}(q)$ in the integrand of Theorem \ref{sats1} is a sum of $20$ terms of type $\mathcal{Q}(X,Y;q).$ If we treat $\delta$ as a complex variable then all the factors of (\ref{curly-q-def}) are analytic functions of $\delta,$ except $\mathcal{Z}(X,Y)$ which is a meromorphic function of $\delta,$ with a pole of order $9$ at $\delta = 0.$ Therefore each of those $20$ terms will have a Laurent series in $\delta.$ However, when we take the sum of them, we must get an analytic function of $\delta$, since the LHS of Theorem \ref{sats1} is continuous with respect to $\delta$ 
(as $\delta \to 0$). As we will let $\delta \to 0,$ we are not interested in the coefficients of the positive $\delta$-powers, but rather want to find the $\delta^0$-coefficient. The key here is to focus on the contribution coming from taking the $\delta^9$-coefficient from the term 
$( \frac{q}{\pi} )^{\delta_{X,Y}},$ since this yields the highest power of $\log q.$ Note that this part of the $\delta^0$-coefficient is equal to $42\log ^9 q/9!.$ This establishes the first equality-statement of this corollary.

Now we will show the second equality-statement of this corollary. Suppose for the moment that we have managed to prove that the main term on the second line of this corollary is equal to the third line. Then by treating the error term in a similar way (one may want to notice that $|\Psi(x)| \ll 1$), we obtain the desired result. 

Thus we are done if we can show that 
\begin{align} \label{cor-steg-som-behovs}
&a_3 \sum_{q \geqslant 1} \Psi\Big(\frac{q}{Q}\Big) 
\prod_{p|q} \frac{(1-\frac{1}{p})^5}{(1+\frac{4}{p}+\frac{1}{p^2})} 
\phi^{\flat}(q) \log^9 q \\
&= a_3(\mathcal{L}) Q^2 \log^9 Q 
\int\limits_{0}^{\infty} \frac{\Psi(x)x}{2}  \, dx + O\big(Q^2 \log^8 Q\big). \nonumber
\end{align}
Let us make two definitions.
\[ G(q):= \frac{\phi^\flat(q) \log^9 q}{q} 
\prod_{p|q} \frac{(1-\frac{1}{p})^5}{(1+\frac{4}{p}+\frac{1}{p^2})} \]
and 
\[ H(P):= \sum_{q \leqslant P} G(q). \]
Assume for the moment that 
\begin{equation} \label{H-egenskap}
H(t) = \frac{a_{3}(\mathcal{L})}{2a_{3}} t \log^9 t + O(t^{13/16}).
\end{equation}
Then by partial summation we have that (\ref{cor-steg-som-behovs}) equals 
\begin{align*} 
&a_3 \sum_{q \geqslant 1} q \Psi\Big(\frac{q}{Q}\Big) G(q) 
= a_3 \sum_{Q < q \leqslant 2Q} q \Psi\Big(\frac{q}{Q}\Big) G(q) \\
&= -a_3 \int\limits_{Q}^{2Q} \frac{d}{dt} \Big(t \Psi\Big(\frac{t}{Q}\Big)\Big) 
H(t) \, dt \\ 
&= -a_3 \int\limits_{Q}^{2Q} \frac{d}{dt} \Big(t \Psi\Big(\frac{t}{Q}\Big)\Big) 
\Big\{ \frac{a_{3}(\mathcal{L})}{2a_{3}} t \log^9 t + O(t^{13/16}) \Big\} 
\, dt \\
&= \frac{a_3(\mathcal{L})}{2} \int\limits_{Q}^{2Q} t \Psi\Big(\frac{t}{Q}\Big) 
\Big\{ \log^9 t + O(\log^8 t) \Big\} \, dt 
+ O\Big(\int\limits_{Q}^{2Q} \Big|\frac{d}{dt} \Big(t \Psi\Big(\frac{t}{Q}\Big)\Big)\Big| t^{13/16} \, dt \Big) \\
&= a_3(\mathcal{L}) Q^2 \log^9 Q 
\int\limits_{0}^{\infty} \frac{\Psi(x)x}{2}  \, dx + O\big(Q^2 \log^8 Q\big).
\end{align*}

To complete this proof we must prove the assumption (\ref{H-egenskap}). 
Trivially 
\begin{equation} \label{1-delen-av-char-bla}
\sum_{q \leqslant P} \frac{\log^9 q}{q} 
\prod_{p|q} \frac{(1-\frac{1}{p})^5}{(1+\frac{4}{p}+\frac{1}{p^2})}
\ll \sum_{q \leqslant P} \frac{\log^9 P}{q} \cdot 1 
\ll \log^{10} P \ll P^{13/16}.
\end{equation}
Therefore, since $\phi^{\flat}(q) = \frac{\phi^{*}(q)}{2} + O(1),$ 
where $\phi^{*}(q)$ is denoting the number of primitive Dirichlet characters modulo $q,$ (\ref{H-egenskap}) will follow if we show that
\begin{equation} \label{H-och-stjarna}
\sum_{q \leqslant t} \frac{\phi^{*}(q) \log^9 q}{q} 
\prod_{p|q} \frac{(1-\frac{1}{p})^5}{(1+\frac{4}{p}+\frac{1}{p^2})} = \frac{a_{3}(\mathcal{L})}{a_{3}} t \log^{9}{t} + O(t \log^{8}{t}).
\end{equation}
Now $\phi^{*}(q)$ is a well-known multiplicative function given by 
$\phi^{*}(p) = p - 2$ for primes $p$ and $\phi^{*}(p^k) = p^{k} (1-1/p)^2$ for 
$k \geqslant 2$.
By standard partial summation, (\ref{H-och-stjarna}) follows from that, with $C$ being some constant, the following result holds: 
\begin{equation} \label{H-och-stjarna-tva}
\sum_{q \leqslant x} \frac{\phi^{*}(q)}{q^2} 
\prod_{p|q} \frac{(1-\frac{1}{p})^5}{(1+\frac{4}{p}+\frac{1}{p^2})} 
= \frac{a_{3}(\mathcal{L})}{a_{3}} \log x + C + O(x^{-3/16}).
\end{equation}

Now we prove (\ref{H-och-stjarna-tva}). Note that we can without loss of generality assume that $x \equiv \frac{1}{2} (\textrm{mod } 1)$.
For Re($s$) $> 1,$ introduce 
\begin{align} 
& K(s):= \sum_{q \geqslant 1} \frac{\phi^{*}(q)}{q^{1+s}} 
\prod_{p|q} \frac{(1-\frac{1}{p})^5}{(1+\frac{4}{p}+\frac{1}{p^2})} \\
&= \prod_{p} \Big(1 + \frac{\phi^{*}(p)}{p^{1+s}} 
\cdot \frac{(1 - \frac{1}{p})^5}{(1 + \frac{4}{p} + \frac{1}{p^2})} 
+ \frac{(1 - \frac{1}{p})^7}{(1 + \frac{4}{p} + \frac{1}{p^2})} 
\cdot \Big(\frac{1}{p^{2s}} + \frac{1}{p^{3s}} + \ldots \Big) \Big) \nonumber \\
&= \prod_{p} \Big\{ (1 - p^{-s})^{-1} \Big\} D(s) \nonumber 
\end{align}
say, where $D(s)$ certainly converges absolutely and uniformly for 
Re$(s) > 1/4$ and 
\begin{equation} \label{res-hjalp}
D(s)|_{s=1} = \frac{a_3(\mathcal{L})}{a_3}.
\end{equation}
Now we use Lemma 3.12 in Titchmarsh \cite{tit}. In the notation used there we take $f(s) = K(s),$ $c$ to be a small fixed positive constant and then also 
$\sigma = 1$ and $T = x$.  
Then we obtain that
\begin{equation}
\sum_{q \leqslant x} \frac{\phi^{*}(q)}{q^2} 
\prod_{p|q} \frac{(1-\frac{1}{p})^5}{(1+\frac{4}{p}+\frac{1}{p^2})}
= \frac{1}{2\pi i} \int\limits_{c - ix}^{c + ix} K(1+s) \frac{x^s}{s} \; ds 
+ O(x^{c-1}).
\end{equation}
Next we move the line of integration to Re$(s) = -1/2$. 
The main term in our answer comes from the residue at $s = 0$. 
To bound the contribution from the two horizontal and the one vertical line of integration we may for example work along the lines of Chapter 12 in \cite{tit}.
The contribution from the horizontal lines is $\ll x^{c-5/6}$. The contribution from the vertical line is certainly $\ll x^{c - 1/2}$.
Thus picking $c = 5/16$ we obtain that 
\begin{align}
\sum_{q \leqslant x} \frac{\phi^{*}(q)}{q^2} 
\prod_{p|q} \frac{(1-\frac{1}{p})^5}{(1+\frac{4}{p}+\frac{1}{p^2})}
&= \textrm{Residue}_{s=0} \Big\{ K(1+s) \frac{x^s}{s} \Big\} + O(x^{-3/16}) \nonumber \\
&= \frac{a_{3}(\mathcal{L})}{a_{3}} \log{x} + C + O(x^{-3/16}),
\end{align}
where we use (\ref{res-hjalp}) when evaluating the residue. 
\end{proof}

\begin{remark} \label{partial-summ-kvall}
As a side-note, we note for future convenience that partial summation applied to (\ref{H-och-stjarna-tva}) implies that 
\begin{equation} \label{H-och-stjarna-tre}
\sum_{q \leqslant t} \frac{\phi^{*}(q)}{q} 
\prod_{p|q} \frac{(1-\frac{1}{p})^5}{(1+\frac{4}{p}+\frac{1}{p^2})} 
= \frac{a_{3}(\mathcal{L})}{a_{3}} t + O(t^{13/16}) 
\end{equation}
and hence that 
\begin{equation} \label{H-och-stjarna-fyra}
H_2(t) := \sum_{q \leqslant t} \frac{\phi^{\flat}(q)}{q} 
\prod_{p|q} \frac{(1-\frac{1}{p})^5}{(1+\frac{4}{p}+\frac{1}{p^2})} 
= \frac{a_{3}(\mathcal{L})}{2a_{3}} t + O(t^{13/16}).
\end{equation}
Let 
\begin{equation}
G_2(q) = \frac{\phi^{\flat}(q)}{q} 
\prod_{p|q} \frac{(1-\frac{1}{p})^5}{(1+\frac{4}{p}+\frac{1}{p^2})}.
\end{equation}
By working as in the proof of Corollary \ref{cor1} we get that 
\begin{align} \label{H-och-stjarna-och-integrering} 
&a_3 \sum_{q \geqslant 1} \Psi\Big(\frac{q}{Q}\Big) \phi^{\flat}(q) 
\prod_{p|q} \frac{(1-\frac{1}{p})^5}{(1+\frac{4}{p}+\frac{1}{p^2})} 
= a_3 \sum_{q \geqslant 1} q \Psi\Big(\frac{q}{Q}\Big) G_2(q) \nonumber \\
&\quad = a_3 \sum_{Q < q \leqslant 2Q} q \Psi\Big(\frac{q}{Q}\Big) G_2(q) 
= - a_3 \int\limits_{Q}^{2Q} \frac{d}{dt} \Big(t \Psi\Big(\frac{t}{Q}\Big)\Big) 
H_2(t) \; dt \nonumber \\
&\qquad = - a_3 \int\limits_{Q}^{2Q} \frac{d}{dt} \Big(t \Psi\Big(\frac{t}{Q}\Big)\Big) 
\Big\{ \frac{a_{3}(\mathcal{L})}{2a_{3}} t + O(t^{13/16}) \Big\} 
\; dt \nonumber \\
&\qquad = \frac{a_3(\mathcal{L})}{2} 
\int\limits_{Q}^{2Q} t \Psi\Big(\frac{t}{Q}\Big) \; dt 
+ O\Big(\int\limits_{Q}^{2Q} \Big|\frac{d}{dt} \Big(t \Psi\Big(\frac{t}{Q}\Big)\Big)\Big| t^{13/16} \; dt \Big) \nonumber \\
&\qquad = a_3(\mathcal{L}) Q^2 
\int\limits_{0}^{\infty} \frac{\Psi(x)x}{2}  \; dx + O\big(Q^{29/16} \big).
\end{align}
\end{remark}

\begin{lemma} \label{trivial-bound}
Suppose that all shifts are zero in Theorem \ref{sats1}. Then the LHS of Theorem \ref{sats1} is $\ll Q^2 \log^9 Q.$
\end{lemma}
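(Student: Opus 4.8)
The plan is to deduce this immediately from Corollary \ref{cor1}, which already contains all the work. When every shift vanishes, the quantity on the left-hand side of Theorem \ref{sats1} is, by the definitions of $\Lambda_{A,B}(\chi)$ and $\mathcal{L}_{A,B}(\chi)$ together with the evaluation of $G_{A,B}$ at zero shifts (so that $\prod_{\alpha\in A}\Gamma((1/2+\alpha)/2)\prod_{\beta\in B}\Gamma((1/2+\beta)/2)=|\Gamma((1/2+it)/2)|^6$ after the shift $t$), precisely the expression appearing on the first two lines of (\ref{cor1-ekv}), namely
\[ \sum_{q \geqslant 1} \sideset{}{^\flat}\sum_{\chi \textrm{(mod } q)} \Psi\Big(\frac{q}{Q}\Big) \int_{-\infty}^{\infty} \Phi(t)\, |L(1/2+it,\chi)|^6 \, |\Gamma((1/2+it)/2)|^6 \, dt. \]
Corollary \ref{cor1} evaluates this, and its last two lines assert that it equals
\[ \frac{42\, a_3(\mathcal{L})}{9!}\, Q^2 \log^9 Q \int_{0}^{\infty} \frac{\Psi(x)\,x}{2}\,dx \int_{-\infty}^{\infty} \Phi(t)\,|\Gamma((1/2+it)/2)|^6\,dt \;+\; O\big(Q^2 \log^8 Q\big). \]

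It then remains only to check that the leading coefficient is $O(1)$. The number $a_3(\mathcal{L})$ is an absolute constant, being a convergent Euler product independent of $Q$. The integral $\int_{0}^{\infty}\tfrac12\Psi(x)x\,dx$ is $\ll 1$ since $\Psi$ is reasonable: it is supported in $[1,2]$ and bounded there by $N_1$, whence the integral is at most $\tfrac32 N_1$. Likewise $\int_{-\infty}^{\infty}\Phi(t)\,|\Gamma((1/2+it)/2)|^6\,dt$ converges absolutely and is $\ll 1$, because $\Phi$ reasonable gives $\Phi(t)\ll N_3 e^{-t^2}$ while $|\Gamma((1/2+it)/2)|$ is bounded on $\mathbb{R}$ (in fact it decays exponentially by Stirling's formula), so the integrand is $\ll e^{-t^2}$ and the integral is $\ll N_3$. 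Hence the main term above is $\ll Q^2\log^9 Q$, and since the error term is $O(Q^2\log^8 Q)=o(Q^2\log^9 Q)$, we conclude that the left-hand side of Theorem \ref{sats1} with zero shifts is $\ll Q^2\log^9 Q$, with implied constant depending only on $\Psi$ and $\Phi$ (equivalently on $N_1$, $N_2$, $N_3$), consistent with the remark following Theorem \ref{sats1}.

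There is essentially no obstacle here: the substance was carried out in proving Corollary \ref{cor1}, and the only thing to verify is the finiteness and $Q$-independence of the two constant integrals, which is immediate from the definition of ``reasonable''. I would add only the remark that one does not need the full strength of Corollary \ref{cor1} for this statement — it would suffice to retain just the upper bound for the diagonal contribution $\mathcal{D}_{A,B}$ in its proof and to absorb the off-diagonal pieces into the $O(Q^{7/4+\epsilon})$ error of Theorem \ref{sats1} — but citing Corollary \ref{cor1} directly is the cleanest route.
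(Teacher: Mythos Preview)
Your proof is correct and follows essentially the same route as the paper: invoke Corollary~\ref{cor1}, then bound the two $Q$-independent integrals using that $\Psi$ and $\Phi$ are reasonable and that $\Gamma(1/4+it/2)$ is bounded on $\mathbb{R}$. One small correction to your closing aside: the off-diagonal pieces $\mathcal{G}_{A,B}+\mathcal{S}_{A,B}$ are \emph{not} absorbable into the $O(Q^{7/4+\epsilon})$ error---they contribute genuinely to the main term of order $Q^2\log^9 Q$ (see (\ref{CIS-G-och-S}))---so the shortcut you sketch there does not work, though this does not affect the validity of your actual argument.
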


\begin{proof}
We use Corollary \ref{cor1} and then it remains to show that 
\[ 42 a_3(\mathcal{L}) Q^2 \frac{\log^9 Q}{9!} 
\int\limits_{0}^{\infty} \frac{\Psi(x) x}{2} \, dx \int\limits_{-\infty}^{\infty} \Phi(t) 
|\Gamma((1/2+it)/2)|^6 \, dt \ll Q^2 \log^9 Q. \] 
Clearly we have that \[ \Big| \int\limits_{0}^{\infty} 
\frac{\Psi(x) x}{2} \, dx\Big| \leqslant 
\int\limits_{1}^{2} \frac{|\Psi(x)| x}{2} \, dx 
\ll \int\limits_{1}^{2} \frac{1 \cdot x}{2} \, dx \ll 1.\]
Also, since $\Gamma(\sigma+it)$ is a bounded function on the line $\sigma = 1/4$, we find that
\[ \Big|\int\limits_{-\infty}^{\infty} \Phi(t) |\Gamma((1/2+it)/2)|^6 \, dt\Big| 
\ll \int\limits_{-\infty}^{\infty} |\Phi(t)| \, dt 
\ll \int\limits_{-\infty}^{\infty} \exp(-t^2) \, dt \ll 1. \] 
This completes the proof.
\end{proof}

\begin{remark} \label{tail-remark}
Notice that the ``tail" of the integral 
\[ \int\limits_{-\infty}^{\infty} \Phi(t) |\Gamma((1/2+it)/2)|^6 \, dt \]
is unimportant. Indeed, by using the following Chernoff bound (see \cite{chernoff}) for the complimentary error function 
\begin{equation} \label{chernoff}
\textrm{erfc}(x) := \frac{2}{\sqrt{\pi}} \int\limits_{x}^{\infty} \exp(-t^2) \, dt \leqslant \exp(-x^2), \; x \geqslant 0
\end{equation} 
we see that
\begin{align} \label{svans}
\int\limits_{\log Q}^{\infty} \Phi(t) |\Gamma((1/2+it)/2)|^6 \, dt 
&\ll \int\limits_{\log Q}^{\infty} \exp(-t^2) \cdot 1 \, dt \\
&\ll \exp(-(\log Q)^2) = Q^{-\log Q} \ll Q^{-1}, \nonumber
\end{align}
and similarly for the integral-part between $-\infty$ and $-\log Q.$ 
In particular we could change the integral involving $\Phi(t)$ in Corollary \ref{cor1} so that it just goes from $-\log Q$ to $\log Q$ instead of from 
$-\infty$ to $\infty.$
\end{remark}

\section{Simplification of Theorem \ref{sats1} to obtain \\ Theorem~\ref{sats1.3}} \label{section-kap-tre}

For future need we will in the next three lemmas basically show that the expressions (\ref{gamma-uttryck})-(\ref{no-shifts-tre}) for (\ref{ett-tre})-(\ref{tre-tre}) respecively remain rather accurate when we allow small shifts.

\begin{lemma} \label{hjalp-lemma-ett}
Suppose that $|A| = |B| = 3$ and that $\alpha, \beta \ll 1/\log Q,$ for $\alpha \in A$, $\beta \in B$. Then 
$G_{A_{it},B_{-it}} = 
|\Gamma((1/2 + it)/2)|^6 
\Big( 1 + O\Big( \frac{\log \log Q}{\log Q} \Big) \Big),$
for $|t| \leqslant \log Q.$
\end{lemma}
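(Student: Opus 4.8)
The plan is to reduce the statement to a standard estimate for the logarithmic derivative of the Gamma function. Denote by $\alpha_1,\alpha_2,\alpha_3$ the shifts in $A$ and by $\alpha_4,\alpha_5,\alpha_6$ those in $B.$ Unfolding the definition of $G_{A,B}$ gives
\[
G_{A_{it},B_{-it}} = \prod_{j=1}^{3} \Gamma\Big( \frac{1}{4} + \frac{it}{2} + \frac{\alpha_j}{2} \Big) \prod_{j=4}^{6} \Gamma\Big( \frac{1}{4} - \frac{it}{2} + \frac{\alpha_j}{2} \Big),
\]
and setting all $\alpha_j = 0$ collapses this to $\Gamma(1/4 + it/2)^3\,\Gamma(1/4 - it/2)^3 = |\Gamma((1/2+it)/2)|^6,$ using $\overline{\Gamma(w)} = \Gamma(\overline{w})$ and $1/4 - it/2 = \overline{1/4 + it/2}$ for real $t.$ It therefore suffices to bound, for $|t| \leqslant \log Q,$ the quotient of the two, i.e.\ the product of the six factors $\Gamma(z + \alpha_j/2)/\Gamma(z)$ with $z = 1/4 + it/2$ (for $j \leqslant 3$) or $z = 1/4 - it/2$ (for $j \geqslant 4$), and to show it is $1 + O(\log\log Q / \log Q).$

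Next I would treat each factor by writing, with $h = \alpha_j/2 \ll 1/\log Q,$
\[
\frac{\Gamma(z + h)}{\Gamma(z)} = \exp\Big( \int_{0}^{h} \psi(z + u)\, du \Big), \qquad \psi := \Gamma'/\Gamma,
\]
which is legitimate because along the segment from $z$ to $z+h$ the real part equals $1/4 + O(1/\log Q) \geqslant 1/8$ (for $Q$ large), so the segment stays at distance $\geqslant 1/8$ from the poles of $\Gamma.$ On that region Stirling's formula gives $\psi(w) = \log w + O(1/|w|),$ hence $|\psi(z+u)| \ll 1 + \log(1 + |t|) \ll \log\log Q$ uniformly for $|t| \leqslant \log Q.$ Therefore $\big|\int_0^h \psi(z+u)\,du\big| \ll |h|\log\log Q \ll \log\log Q / \log Q,$ and exponentiating yields $\Gamma(z+h)/\Gamma(z) = 1 + O(\log\log Q/\log Q).$

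Finally, multiplying the six factors — only a bounded number — preserves this error term, which gives the asserted identity. There is no serious obstacle: the one point to check is the uniformity of the digamma bound on the vertical segment up to height $\log Q,$ and in particular that the shifts, although possibly complex, are too small ($O(1/\log Q)$) to move the argument of $\Gamma$ anywhere near a pole; this follows at once from the real part staying within $O(1/\log Q)$ of $1/4.$
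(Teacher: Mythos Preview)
Your proof is correct and follows essentially the same argument as the paper: both unfold the product, reduce to a single ratio $\Gamma(z+h)/\Gamma(z)$, express it as the exponential of an integral of the digamma function $\psi = \Gamma'/\Gamma$, bound $\psi \ll \log(2+|t|) \ll \log\log Q$ on the relevant segment, and exponentiate. Your version is slightly more careful in noting why the integration path avoids the poles of $\Gamma$, but the method is identical.
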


\begin{proof}
By definition
\[ G_{A_{it},B_{-it}} = \prod_{\alpha \in A} 
\Gamma \Big( \frac{1}{4} + \frac{it}{2} + \frac{\alpha}{2} \Big) 
\prod_{\beta \in B} 
\Gamma \Big( \frac{1}{4} - \frac{it}{2} + \frac{\beta}{2} \Big). \]
We thus need to show that 
\[ \Gamma \Big( \frac{1}{4} + \frac{it}{2} + \frac{\alpha}{2} \Big)
= \Gamma \Big( \frac{1}{4} + \frac{it}{2} \Big) 
\Big(1 + O\Big( \frac{\log \log Q}{\log Q} \Big)\Big), \]
for $\alpha \ll 1/\log Q$ and $|t| \leqslant \log Q.$
In order to do this we use the Fundamental Theorem of Calculus
\[ F(\gamma(b)) - F(\gamma(a)) = \int\limits_{\gamma} F'(z) \, dz, \]
with $F(z) = \log \Gamma(z).$
We have 
\[ F'(z) = \frac{\Gamma'(z)}{\Gamma(z)} \ll \log (2 + |t|) \ll \log \log Q. \]
Therefore we trivially obtain that
\[ F\Big(\frac{1}{4} + \frac{it}{2} + \frac{\alpha}{2}\Big) 
- F\Big(\frac{1}{4} + \frac{it}{2}\Big) \ll \frac{\log \log Q}{\log Q}. \]
Taking exponentials, we obtain the desired result.
\end{proof}

\begin{lemma} \label{hjalp-lemma-tva}
For $q \ll Q$ and shifts $\alpha, \beta \ll 1/\log Q,$ there exists 
$M \in \mathbb{N}$ such that 
\begin{equation}
\mathcal{B}_q (A_{it},B_{-it}) 
= \prod_{p|q} \Big\{ \Big( 1 + \frac{4}{p} + \frac{1}{p^2} \Big) 
\Big( 1 - \frac{1}{p} \Big)^{-5} \Big\} \cdot 
\Big( 1 + O\Big( \frac{(\log \log Q)^M}{\log Q} \Big) \Big). 
\end{equation} 
\end{lemma}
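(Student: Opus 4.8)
The plan is to reduce the estimation of $\mathcal{B}_q(A_{it},B_{-it})$, which is the $\theta$-integral in (\ref{B-def}) taken over the primes dividing $q$, to the shift-free expression (\ref{no-shifts-ett}), one prime at a time. We already know from the discussion following (\ref{B-def}) that $\mathcal{B}_p(A_{it},B_{-it})$ is independent of $t$, so it suffices to treat $\mathcal{B}_p(A,B)$ with $\alpha,\beta\ll 1/\log Q$. Writing the shifts in $A$ as $\alpha_1,\alpha_2,\alpha_3$ and those in $B$ as $\alpha_4,\alpha_5,\alpha_6$, recall that $z_{p,\theta}(1/2+\alpha_j)=(1-e(\theta)p^{-1/2-\alpha_j})^{-1}$ for $j\le 3$ and $z_{p,-\theta}(1/2+\alpha_j)=(1-e(-\theta)p^{-1/2-\alpha_j})^{-1}$ for $j\ge 4$. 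The idea is simply to show each factor $z_{p,\pm\theta}(1/2+\alpha_j)$ differs from $z_{p,\pm\theta}(1/2)$ by a multiplicative factor $1+O\bigl(p^{-1/2}\log\log Q/\log Q\bigr)$, uniformly in $\theta$, and then to multiply the six estimates together and integrate in $\theta\in[0,1]$.

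**Key steps.** First I would estimate a single factor. For $|u|\le 1/2$ (say) and $\alpha\ll 1/\log Q$, the map $\alpha\mapsto(1-up^{-1/2-\alpha})^{-1}$ is analytic, and its logarithmic derivative in $\alpha$ is $\frac{(\log p)\,up^{-1/2-\alpha}}{1-up^{-1/2-\alpha}}$, which for $p\ge 2$ is $O\bigl((\log p)\,p^{-1/2}\bigr)$ since $|up^{-1/2-\alpha}|\le c<1$. Integrating from $0$ to $\alpha$ as in the proof of Lemma \ref{hjalp-lemma-ett} (Fundamental Theorem of Calculus with $F=\log$ of this factor) gives $\log z_{p,\theta}(1/2+\alpha)-\log z_{p,\theta}(1/2)\ll (\log p)\,p^{-1/2}/\log Q$, hence, exponentiating,
\[
z_{p,\theta}(1/2+\alpha) = z_{p,\theta}(1/2)\Bigl(1 + O\bigl((\log p)\,p^{-1/2}/\log Q\bigr)\Bigr),
\]
uniformly in $\theta\in[0,1]$; the same for the $-\theta$ factors. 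Second, I would multiply the six such identities, using $\prod_{j=1}^6(1+O(\eta_j))=1+O(\max_j|\eta_j|)$ when the $\eta_j$ are small, to get
\[
\prod_{\alpha\in A} z_{p,\theta}(1/2+\alpha)\prod_{\beta\in B} z_{p,-\theta}(1/2+\beta)
= \Bigl(\prod_{\alpha\in A} z_{p,\theta}(1/2)\prod_{\beta\in B} z_{p,-\theta}(1/2)\Bigr)\Bigl(1 + O\bigl((\log p)\,p^{-1/2}/\log Q\bigr)\Bigr).
\]
Third, I would integrate in $\theta$ over $[0,1]$: the main term integrates to $\mathcal{B}_p(\{1/2\}^3,\{1/2\}^3)$, which by the straightforward calculation already quoted equals $(1+4/p+1/p^2)(1-1/p)^{-5}$, while the error term is bounded in absolute value by $O\bigl((\log p)\,p^{-1/2}/\log Q\bigr)$ times $\int_0^1 |\prod_{\alpha} z_{p,\theta}(1/2+\alpha)\prod_\beta z_{p,-\theta}(1/2+\beta)|\,d\theta$, and the latter integral is $O(1)$ (indeed $\mathcal{B}_p(A,B)=1+O(1/p)$ by expanding the geometric series as in the display after (\ref{B-def})). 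Hence
\[
\mathcal{B}_p(A_{it},B_{-it}) = \Bigl(1+\tfrac{4}{p}+\tfrac{1}{p^2}\Bigr)\Bigl(1-\tfrac{1}{p}\Bigr)^{-5}\Bigl(1 + O\bigl((\log p)\,p^{-1/2}/\log Q\bigr)\Bigr).
\]
Finally, I would take the product over $p\mid q$. There are at most $O(\log q/\log\log q)\ll\log Q$ such primes, each contributing a factor $1+O((\log p)p^{-1/2}/\log Q)$; more crudely, each error term is $O(1/\log Q)$ up to a power of $\log\log Q$ coming from $\log p\ll\log q\ll\log Q$ in the worst case, but since we also have the $p^{-1/2}$ decay and $\sum_{p\mid q}(\log p)p^{-1/2}\ll \sqrt{\log q}\,\log\log Q\ll \sqrt{\log Q}\,\log\log Q$, multiplying gives an overall factor $1+O\bigl((\log\log Q)^{M}/\log Q\bigr)$ for a suitable fixed $M$ (any $M\ge 1$ works once we absorb constants; $M=2$ is safe). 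This yields exactly the claimed identity, since $\prod_{p\mid q}(1+4/p+1/p^2)(1-1/p)^{-5}$ is precisely the stated main term.

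**Main obstacle.** The only genuinely delicate point is the uniformity in $\theta$ and the bookkeeping when taking the product over $p\mid q$: one must be sure the error terms multiply without the accumulated constants growing faster than a fixed power of $\log\log Q$. This is handled by the $p^{-1/2}$ (or even $p^{-1}$) decay in each prime's error term — so the sum $\sum_{p\mid q}$ of the errors is bounded, not just the number of primes times the worst error — together with the trivial bound $\log p\le\log q\ll\log Q$ absorbed into a single power of $\log\log Q$ via $\sum_{p\le y}\log p/\sqrt p\ll\sqrt y$ with $y$ the largest prime factor of $q$. Everything else is a routine imitation of the Fundamental-Theorem-of-Calculus argument used in Lemma \ref{hjalp-lemma-ett}.
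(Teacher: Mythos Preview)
Your approach differs from the paper's and, as written, it does not quite reach the stated error term. The crux is your third step: you bound the error in $\mathcal{B}_p$ by estimating the integrand \emph{pointwise} in $\theta$ and then integrating absolute values. That gives a per--prime multiplicative error of size $O\bigl((\log p)\,p^{-1/2}/\log Q\bigr)$, because the single--factor perturbation $z_{p,\theta}(1/2+\alpha)/z_{p,\theta}(1/2)-1$ genuinely is of order $(\log p)\,p^{-1/2}/\log Q$ for each fixed $\theta$. When you then multiply over $p\mid q$ and take logarithms, the relevant sum is $\sum_{p\mid q}(\log p)\,p^{-1/2}$. In the worst case ($q$ a primorial, so that $p_{\max}\asymp\log q$) this sum is $\asymp\sqrt{\log q}$, and dividing by $\log Q$ yields only $O\bigl((\log Q)^{-1/2}\bigr)$, not $O\bigl((\log\log Q)^{M}/\log Q\bigr)$. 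So your last sentence, where you conclude ``multiplying gives an overall factor $1+O((\log\log Q)^M/\log Q)$'', does not follow from the bounds you have actually established; the parenthetical ``(or even $p^{-1}$)'' in your obstacle paragraph is exactly the missing ingredient, and you have not proved it.

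The paper recovers the extra factor of $p^{-1/2}$ by exploiting the orthogonality hidden in the $\theta$--integral rather than bounding pointwise. Concretely, it observes that $\mathcal{B}_p$ is the $e(0\cdot\theta)$--coefficient of the product of geometric series, so that every nonconstant contribution pairs an $e(j\theta)$ with an $e(-j\theta)$ and therefore carries at least a full $p^{-1}$ (not $p^{-1/2}$). Differentiating termwise, the paper obtains $\partial\mathcal{B}_p/\partial\alpha_i\ll(\log p)/p$, and then $\sum_{p\mid q}(\log p)/p\ll\log\log Q$ gives the claimed bound after the Fundamental Theorem of Calculus (applied to $\mathcal{B}_q$ itself, together with the crude bound $\mathcal{B}_q\ll(\log\log Q)^M$). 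To repair your argument you would need, at step three, to integrate in $\theta$ \emph{before} bounding absolute values --- e.g.\ differentiate $\mathcal{B}_p$ directly under the integral sign and read off the constant Fourier coefficient --- so as to obtain $\mathcal{B}_p(A,B)=(1+4/p+1/p^2)(1-1/p)^{-5}\bigl(1+O((\log p)\,p^{-1}/\log Q)\bigr)$; with that sharper per--prime error the product over $p\mid q$ does give the lemma as stated.
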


\begin{proof}
Assume for the moment that we know that
\begin{equation} \label{lemma-tva-i} 
\mathcal{B}_q (A_{it},B_{-it}) \ll (\log \log Q)^M 
\end{equation}
and that 
\begin{equation} \label{lemma-tva-ii} 
\frac{\partial \mathcal{B}_p}{\partial \alpha_i} 
\ll \frac{\log p}{p},
\end{equation}
where in (\ref{lemma-tva-ii}) we have $p$ denoting a prime and where we mean the partial derivative with respect to any of the shifts in $A$ or $B$.  

Then since $\mathcal{B}_q = \prod_{p|q} \mathcal{B}_p,$
we find that
\begin{align} 
\frac{\partial \mathcal{B}_q}{\partial \alpha_i}
= \sum_{p|q} \Big\{ \mathcal{B}_{q/p} \cdot 
\frac{\partial \mathcal{B}_p}{\partial \alpha_i} \Big\} 
&\ll \sum_{p|q} \Big\{ (\log \log Q)^M \cdot 
\frac{\log p}{p} \Big\} \nonumber \\ 
&= (\log \log Q)^M \sum_{p|q} \frac{\log p}{p} 
\ll (\log \log Q)^{M+1}.
\end{align}
Therefore (remember (\ref{no-shifts-ett})), by the Fundamental Theorem of Calculus, we trivially get that
\begin{equation}
\mathcal{B}_q (A_{it},B_{-it}) 
- \prod_{p|q} \Big\{ \Big( 1 + \frac{4}{p} + \frac{1}{p^2} \Big) 
\Big( 1 - \frac{1}{p} \Big)^{-5} \Big\} 
\ll (\log \log Q)^{M+1} (\log Q)^{-1} 
\end{equation} 
and since 
\begin{equation}
\prod_{p|q} \Big\{ \Big( 1 + \frac{4}{p} + \frac{1}{p^2} \Big) 
\Big( 1 - \frac{1}{p} \Big)^{-5} \Big\} \geqslant 1 
\end{equation} 
we get the result (by ``rewriting" $M + 1$ as $M$). 

It thus remains to show (\ref{lemma-tva-i}) and (\ref{lemma-tva-ii}).
We start with the first statement. Suppose that 
\begin{equation} \label{b-p-liten-sak}
\mathcal{B}_p = 1 + O(p^{-1}), \textrm{ for prime } 
p \leqslant q (\ll Q).
\end{equation}
Then
\begin{align} 
&\mathcal{B}_q := \prod_{p|q} \mathcal{B}_p 
\ll \prod_{p|q} (1 + c/p) 
\leqslant \prod_{p|q} \Big(1 + \frac{1+[c]}{p}\Big) 
\leqslant \prod_{p|q} (1 + 1/p)^{1+[c]} = \nonumber \\
&\indent \indent \; = \Big\{ \prod_{p|q} (1 + 1/p)\Big\}^{1+[c]}
\ll (\log \log Q)^{1+[c]}.
\end{align} 
Let us denote the shifts in $A$ by $\alpha_1, \alpha_2$ and $\alpha_3$ and the shifts in $B$ by $\alpha_4, \alpha_5$ and $\alpha_6$. Now in order to prove (\ref{b-p-liten-sak}) we recall that $\mathcal{B}_p$ is the 
$e(0 \cdot \theta)$-coefficient in the following product:
\begin{equation} \label{bp-uttryck-lang-form}
\Big(1 + \frac{e(\theta)}{p^{1/2 + \alpha_1}} 
+ \frac{e(2\theta)}{p^{1 + 2\alpha_1}} + \ldots \Big) \times \ldots \times 
\Big(1 + \frac{e(-\theta)}{p^{1/2 + \alpha_6}} 
+ \frac{e(-2\theta)}{p^{1 + 2\alpha_6}} + \ldots \Big). 
\end{equation} 
It is easy to spot that the answer will begin with 
\begin{equation} \label{borjan-bp}
1 + \Big(\displaystyle\sum_{i=1}^{3} p^{-1/2-\alpha_{i}}\Big)
\Big(\displaystyle\sum_{i=4}^{6} p^{-1/2-\alpha_{i}}\Big). 
\end{equation} 
This is clearly $1 + O(p^{-1})$. Let us finish this part of the proof by showing why the rest of the contribution in (\ref{bp-uttryck-lang-form}) is $O(p^{-2})$.
To do this we consider the worst possible scenario, which clearly is when all the shifts are equal to $-\frac{c}{\log Q}$. 
Let here $k:= \frac{1}{2} - \frac{c}{\log Q}$. We are thus considering 
\begin{equation}
\Big(1 + \frac{e(\theta)}{p^{k}} 
+ \frac{e(2\theta)}{p^{2k}} + \ldots \Big) \times \ldots \times 
\Big(1 + \frac{e(-\theta)}{p^{k}} 
+ \frac{e(-2\theta)}{p^{2k}} + \ldots \Big). 
\end{equation} 
Now the $e(j\theta)/p^{jk}$-coefficient in 
$\Big(1 + \frac{e(\theta)}{p^{k}} 
+ \frac{e(2\theta)}{p^{2k}} + \ldots \Big)^3$
is $1 + 2 +\ldots + (j+1) \ll j^2$ and similarly for the 
$e(-j\theta)/p^{jk}$-coefficient in 
$\Big(1 + \frac{e(-\theta)}{p^{k}} 
+ \frac{e(-2\theta)}{p^{2k}} + \ldots \Big)^3$.
Thus ignoring the contribution mentioned in (\ref{borjan-bp}), the rest is then 
\begin{equation}
\ll \displaystyle\sum_{j=2}^{\infty} \frac{j^4}{p^{2jk}} 
\ll \frac{1}{p^{4k}} = p^{-2 + 4c/\log Q} \ll p^{-2}. 
\end{equation} 

Let us finally show (\ref{lemma-tva-ii}). This is similar to the above discussion. Suppose without loss of generality that the partial differentiation is with respect to $\alpha_{1}$. Again we do the worst case scenario and the $e(-j\theta)/p^{jk}$-coefficient is of course unchanged. 
To get the $e(j\theta)/p^{jk}$-coefficient we consider 
\begin{equation}
\Big(1 + 2\frac{e(\theta)}{p^{k}} + 3\frac{e(2\theta)}{p^{2k}} + \ldots \Big)
\Big(\frac{e(\theta) \log p}{p^{k}} 
+ 2\frac{e(2\theta) \log p}{p^{2k}} + \ldots \Big). 
\end{equation} 
The $e(j\theta)/p^{jk}$-coefficient becomes 
$(\log p)(1 \cdot j + \ldots + j \cdot 1) \ll j^3 \log p$.
Hence the relevant $e(0 \cdot \theta)$-coefficient is 
\begin{equation}
\ll \displaystyle\sum_{j=1}^{\infty} 
\frac{j^2 \cdot j^3 \cdot \log p}{p^{2jk}} 
\ll \frac{\log p}{p^{2k}} \ll \frac{\log p}{p}. 
\end{equation} 
\end{proof}

When we prove the next lemma it is important to keep (\ref{no-shifts-tva}) and (\ref{no-shifts-tre}) in mind.

\begin{lemma} \label{hjalp-lemma-tre}
\[ \mathcal{A} (A_{it},B_{-it}) 
= \prod_{p} \Big\{ \Big( 1 + \frac{4}{p} + \frac{1}{p^2} \Big) 
\Big( 1 - \frac{1}{p} \Big)^{4} \Big\} \cdot 
\Big( 1 + O\Big( \frac{(\log \log Q)^2}{\log Q} \Big) \Big) \]
for shifts $\alpha, \beta \ll 1/\log Q.$
\end{lemma}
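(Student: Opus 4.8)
The plan is to compare the two Euler products for $\mathcal{A}$ factor by factor. As was observed in Section~\ref{section-kap-tva}, $\mathcal{A}(X_{it},Y_{-it})$ is independent of $t$ (in each Euler factor the shift $+it$ attached to an element of $X$ cancels the shift $-it$ attached to an element of $Y$), so it suffices to take $t=0$ and prove, for shifts $\alpha,\beta\ll1/\log Q$, that
\[ \mathcal{A}(A,B)=\prod_{p}\Big\{\Big(1+\tfrac4p+\tfrac1{p^{2}}\Big)\Big(1-\tfrac1p\Big)^{4}\Big\}\Big(1+O\big(\tfrac1{\log Q}\big)\Big), \]
which is slightly stronger than the assertion since $O(1/\log Q)\subseteq O((\log\log Q)^{2}/\log Q)$. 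Set $F_{p}(A,B):=\mathcal{B}_{p}(A,B)\mathcal{Z}_{p}(A,B)^{-1}$, so $\mathcal{A}(A,B)=\prod_{p}F_{p}(A,B)$. By (\ref{no-shifts-ett}) and (\ref{no-shifts-tva}) the zero-shift factor is $F_{p}(0,0)=(1+4/p+1/p^{2})(1-1/p)^{4}$, which satisfies $F_{p}(0,0)\asymp1$, and by (\ref{no-shifts-tre}) the product of these over $p$ is exactly the claimed main term. Hence it is enough to show $F_{p}(A,B)-F_{p}(0,0)\ll\log p/(p^{3/2}\log Q)$ uniformly, because then $\mathcal{A}(A,B)/\mathcal{A}(0,0)=\prod_{p}\big(1+(F_{p}(A,B)-F_{p}(0,0))/F_{p}(0,0)\big)=1+O(1/\log Q)$, using $\sum_{p}\log p/p^{3/2}<\infty$.

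For the local expansion, write the shifts in $A$ as $\alpha_{1},\alpha_{2},\alpha_{3}$ and those in $B$ as $\alpha_{4},\alpha_{5},\alpha_{6}$, and put $S_{p}:=\sum_{i=1,2,3}\sum_{j=4,5,6}p^{-1-\alpha_{i}-\alpha_{j}}$. Expanding the geometric series in the integrand of (\ref{B-def}) and integrating in $\theta$, exactly as in the proof of Lemma~\ref{hjalp-lemma-tva}, gives $\mathcal{B}_{p}(A,B)=1+S_{p}+R_{p}$, while multiplying out (\ref{z-cis-def}) gives $\mathcal{Z}_{p}(A,B)^{-1}=\prod_{i=1,2,3}\prod_{j=4,5,6}(1-p^{-1-\alpha_{i}-\alpha_{j}})=1-S_{p}+R'_{p}$, where $R_{p},R'_{p}$ collect the higher-degree terms. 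Since the shifts have real part $\ll1/\log Q$, in particular $<1/8$, one has $S_{p}\ll p^{-3/4}$ and $R_{p},R'_{p}\ll p^{-3/2}$ with absolute implied constants. The crucial point — easy but essential — is that in $F_{p}(A,B)=(1+S_{p}+R_{p})(1-S_{p}+R'_{p})$ the linear terms $\pm S_{p}$ cancel, leaving $F_{p}(A,B)=1+O\big(S_{p}^{2}+|R_{p}|+|R'_{p}|\big)=1+O(p^{-3/2})$ uniformly; in particular the Euler product for $\mathcal{A}$ converges absolutely (this is the point alluded to in the footnote to its definition).

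To bound $F_{p}(A,B)-F_{p}(0,0)$ I would estimate $\partial F_{p}/\partial\alpha_{i}$ and use the fundamental theorem of calculus along the segment from $0$ to $(\alpha_{1},\dots,\alpha_{6})$, so that $|F_{p}(A,B)-F_{p}(0,0)|\le\sum_{i=1}^{6}|\alpha_{i}|\sup|\partial F_{p}/\partial\alpha_{i}|$. For all but finitely many $p$ the bound $F_{p}=1+O(p^{-3/2})$ persists throughout a polydisc of radius $\asymp1/\log p$ about any point whose coordinates are $\ll1/\log Q$ (one just checks that the coordinates stay in the region where the previous paragraph applies), so Cauchy's integral formula for a derivative gives $\partial F_{p}/\partial\alpha_{i}\ll\log p/p^{3/2}$, whence $|F_{p}(A,B)-F_{p}(0,0)|\ll\log p/(p^{3/2}\log Q)$; the finitely many remaining primes contribute $O(1/\log Q)$ by an even cruder estimate. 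Here it is vital to differentiate $F_{p}$ itself — in which the $1/p$-order terms have already cancelled — rather than $\mathcal{B}_{p}$ and $\mathcal{Z}_{p}^{-1}$ separately, since each of the latter has a shift-derivative of size $\asymp\log p/p$ (cf.\ (\ref{lemma-tva-ii})) whose sum over primes diverges. Summing the resulting bound, using $\sum_{p}\log p/p^{3/2}<\infty$, and recalling $\mathcal{A}(0,0)=\prod_{p}\{(1+4/p+1/p^{2})(1-1/p)^{4}\}$ from (\ref{no-shifts-tre}) completes the argument.

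The genuine content is the cancellation in the second paragraph; everything else is uniformity bookkeeping — keeping track of the region in which the local bounds are valid and handling the few small primes — which I expect to be the most tedious part to write out carefully but not conceptually difficult.
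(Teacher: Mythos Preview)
Your argument is correct and in fact yields the sharper error $O(1/\log Q)$. The paper proceeds differently: it splits the Euler product at the threshold $p=\log Q$, writing $\mathcal{A}=F_{1}F_{2}F_{3}$ with $F_{1}=\prod_{p\le\log Q}\mathcal{B}_{p}$, $F_{2}=\prod_{p\le\log Q}\mathcal{Z}_{p}^{-1}$, $F_{3}=\prod_{p>\log Q}\mathcal{B}_{p}\mathcal{Z}_{p}^{-1}$, and for the small primes treats $\mathcal{B}_{p}$ and $\mathcal{Z}_{p}^{-1}$ \emph{separately} --- recycling the proof of Lemma~\ref{hjalp-lemma-tva} for $F_{1}$ and a bare-hands estimate for $F_{2}$. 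The two powers of $\log\log Q$ enter exactly here, from sums of the shape $\sum_{p\le\log Q}\log p/p$ and $\sum_{p\le\log Q}1/p$; only for the tail $F_{3}$ does the paper exploit the cancellation $\mathcal{B}_{p}\mathcal{Z}_{p}^{-1}=1+O(p^{-2+\varepsilon})$. Your key move --- working from the outset with the combined local factor $F_{p}=\mathcal{B}_{p}\mathcal{Z}_{p}^{-1}$, so that this cancellation is already present before you differentiate --- is precisely what removes the cutoff and the $\log\log Q$ loss, since the resulting bound $\partial F_{p}/\partial\alpha_{i}\ll\log p/p^{3/2}$ sums absolutely over all primes. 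The paper's route is quicker to write because it reuses existing machinery wholesale; yours is cleaner in principle and slightly stronger. One small remark: in your Cauchy step you are implicitly applying the estimate to $F_{p}-1$ (which has the same derivative as $F_{p}$), since it is the bound $|F_{p}-1|\ll p^{-3/2}$ on the polydisc, not a bound on $|F_{p}|$, that feeds into $|\partial F_{p}/\partial\alpha_{i}|\ll\log p/p^{3/2}$; this is clearly what you intend.
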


\begin{proof}
We have that 
\begin{align*} \label{tredje-lemma-trippel}
&\mathcal{A} (A_{it},B_{-it}) := \\
&= \prod_{p \leqslant \log Q} \mathcal{B}_{p} (A_{it},B_{-it}) \cdot 
\prod_{p \leqslant \log Q} \mathcal{Z}_{p}^{-1} (A_{it},B_{-it}) \cdot 
\prod_{p > \log Q} (\mathcal{B}_p \mathcal{Z}_{p}^{-1}) (A_{it},B_{-it}) \\
&=: F_{1} \cdot F_{2} \cdot F_{3}.
\end{align*} 

By a very similar proof to the proof of Lemma \ref{hjalp-lemma-tva} we see that 
\[ F_{1} = \Big( \prod_{p \leqslant \log Q} 
\Big\{ \Big( 1 + \frac{4}{p} + \frac{1}{p^2} \Big) 
\Big( 1 - \frac{1}{p} \Big)^{-5} \Big\} \Big) \cdot 
\Big( 1 + O\Big( \frac{(\log \log Q)^M}{\log Q} \Big) \Big). \] 

Obviously, since all the shifts are assumed to be $\ll 1/\log Q,$ the same result holds for the sum of any two of the shifts. Therefore, letting $\alpha_{i,j}$ below stand for a complex number $\ll 1/\log Q,$ we will study 
\begin{align} 
\prod_{p \leqslant \log Q} 
\frac{(1 - \frac{1}{p^{1+\alpha_{i,j}}})}{(1 - p^{-1})}
&= \prod_{p \leqslant \log Q} \Big\{ (1 - p^{-1})^{-1}
\Big(1 - \frac{1}{p}\Big\{ 1 + O\Big( \frac{\log \log Q}{\log Q} 
\Big) \Big\} \Big) \Big\} \nonumber \\
&= \prod_{p \leqslant \log Q} 
\Big\{ 1 + O\Big( \frac{\log \log Q}{p \log Q} 
\Big) \Big\}. \label{tredje-l-del-ii-tja} 
\end{align}
Taking logarithms in (\ref{tredje-l-del-ii-tja}) we get
\[ \sum_{p \leqslant \log Q} 
\log {\Big\{ 1 + O\Big( \frac{\log \log Q}{p \log Q} \Big) \Big\}} \]
which is
\[ \ll \Big(\frac{\log \log Q}{\log Q}\Big) 
\sum_{p \leqslant \log Q} \frac{1}{p}
\ll \frac{(\log \log Q)^2}{\log Q}. \]
Taking exponentials this shows that 
\[ F_{2} = \Big( \prod_{p \leqslant \log Q} 
\Big( 1 - \frac{1}{p} \Big)^{9} \Big) \cdot 
\Big( 1 + O\Big( \frac{(\log \log Q)^2}{\log Q} \Big) \Big). \] 

Since we may choose to let all the shifts equal zero in the following result, we will complete the proof of this lemma by showing that
\[ F_{3} = 1 + O\Big( \frac{1}{\log Q} \Big). \]
Recall from (\ref{borjan-bp}) and the discussion thereafter that  
\[ \mathcal{B}_p (A_{it},B_{-it}) 
= 1 + \Big(\displaystyle\sum_{i=1}^{3} p^{-1/2-\alpha_{i}}\Big)
\Big(\displaystyle\sum_{i=4}^{6} p^{-1/2-\alpha_{i}}\Big) 
+ O(p^{-2 + 4c/\log Q}). \]
Working from the definition it is easy to show that
\[ \mathcal{Z}_{p}^{-1} (A_{it},B_{-it}) 
= 1 - \Big(\displaystyle\sum_{i=1}^{3} p^{-1/2-\alpha_{i}}\Big)
\Big(\displaystyle\sum_{i=4}^{6} p^{-1/2-\alpha_{i}}\Big) 
+ O(p^{-2 + 4c/\log Q}). \]
It follows that 
\begin{equation} \label{hjalp-l-tre-referens}
\mathcal{B}_p (A_{it},B_{-it}) \mathcal{Z}_{p}^{-1} (A_{it},B_{-it}) 
= 1 + O(p^{-2 + 4c/\log Q}). 
\end{equation}
At this point it is then easy to show (for example by first taking logarithms and then taking exponentials) that 
\begin{align*} 
F_{3} 
&:= \prod_{p > \log Q} (\mathcal{B}_p \mathcal{Z}_{p}^{-1}) (A_{it},B_{-it}) \\
&= \prod_{p > \log Q} \Big(1 + O\Big(p^{-2 + 4c/\log Q}\Big)\Big) 
= 1 + O\Big( \frac{1}{\log Q} \Big). 
\end{align*}
\end{proof}

In the next theorem, we have replaced the 
$\mathcal{Q}_{A_{it},B_{-it}}(q)$-terms featuring in Theorem \ref{sats1} by a similar expression that is more independent of the shifts and we have also cut off the ``tail" of the integral involving $\Phi(t)$. Basically the result continues to hold, albeit our new error term is bigger.

\begin{definition} \label{P-def}
Define
\begin{equation} 
\mathcal{P}_{A,B}(q)
:= \displaystyle\sum_{\substack{S \subseteq A \\ T \subseteq B \\ |S| = |T|}} 
\mathcal{P}(\bar{S} \cup (-T),\bar{T} \cup (-S);q),
\end{equation}
where 
\begin{equation} \label{curly-P-def}
\mathcal{P}(X,Y;q):= \Big( \frac{q}{\pi} \Big)^{\delta_{X,Y}} 
\mathcal{Z}(X,Y).
\end{equation}
\end{definition}

\begin{theorem} \label{sats1.25}
Suppose that $|A| = |B| = 3$ and that $\alpha, \beta \ll 1/\log Q,$ for $\alpha \in A,$ $\beta \in B$. Suppose also that $\Psi$ and $\Phi$ are reasonable. Then there exists $M \in \mathbb{N}$ such that
\begin{align} \label{th-3-ref} 
&\sum_{q \geqslant 1} \Psi\Big(\frac{q}{Q}\Big) \int\limits_{-\infty}^{\infty} \Phi(t) \sideset{}{^\flat}\sum_{\chi \textrm{\emph{(mod} } q)} \Lambda_{A_{it},B_{-it}}(\chi) \; dt \\ 
&= a_3 \sum_{q \geqslant 1} 
\Psi\Big(\frac{q}{Q}\Big) 
\phi^\flat(q) 
\prod_{p|q} \frac{( 1 - \frac{1}{p} )^{5}}{ ( 1 + \frac{4}{p} + \frac{1}{p^2} )}
\mathcal{P}_{A,B}(q) 
\int\limits_{-\log Q}^{\log Q} 
\Phi(t) |\Gamma((1/2 + it)/2)|^6 \; dt \nonumber \\
&\; \; + O\Big(Q^2 \log^8 Q (\log{\log Q})^M\Big), \nonumber
\end{align}
with $a_3$ as given in (\ref{a3-def}).
\end{theorem}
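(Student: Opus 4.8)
The plan is to start from Theorem~\ref{sats1}, replace each of the twenty $\mathcal{Q}$-terms making up $\mathcal{Q}_{A_{it},B_{-it}}(q)$ by the corresponding $\mathcal{P}$-term with an explicit $t$- and $\chi$-independent factor pulled out (using Lemmas~\ref{hjalp-lemma-ett}--\ref{hjalp-lemma-tre}), control the resulting error, and finally drop a temporary ``general position'' hypothesis on the shifts by an analyticity argument. By Theorem~\ref{sats1} the left side of (\ref{th-3-ref}) equals
\[
\sum_{q\geq1}\Psi\Big(\tfrac qQ\Big)\int_{-\infty}^{\infty}\Phi(t)\,\phi^\flat(q)\,\mathcal{Q}_{A_{it},B_{-it}}(q)\,dt+O(Q^{7/4+\epsilon}),
\]
and the error is negligible against $Q^2\log^8Q$. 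Suppose first that the six shifts are in \emph{general position}, i.e.\ $|\alpha_i\pm\alpha_j|\gg1/\log Q$ for $i\neq j$ and $|\alpha_i|\gg1/\log Q$. Expanding $\mathcal{Q}_{A_{it},B_{-it}}(q)$ via (\ref{q-stor}) gives twenty terms $\mathcal{Q}(U,V;q)$; each subset pair $(S,T)$ produces $U=X_{it}$, $V=Y_{-it}$ with $X,Y$ having shifts $\ll1/\log Q$, and since in $\delta_{U,V}$, $\mathcal{Z}(U,V)$, $\mathcal{A}(U,V)$ and $\mathcal{B}_q(U,V)$ the $\pm it$ cancel pairwise (using $|U|=|V|=3$ and the observation in Section~\ref{section-kap-tva}), only $G_{U,V}$ depends on $t$; in particular $\mathcal{P}(U,V;q)=\mathcal{P}(X,Y;q)$ is $t$-independent. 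From (\ref{curly-q-def}) and (\ref{curly-P-def}), $\mathcal{Q}(U,V;q)=\mathcal{P}(X,Y;q)\cdot G_{U,V}\mathcal{A}(U,V)/\mathcal{B}_q(U,V)$, and Lemmas~\ref{hjalp-lemma-ett}, \ref{hjalp-lemma-tre}, \ref{hjalp-lemma-tva} give, for $|t|\leq\log Q$,
\[
\frac{G_{U,V}\,\mathcal{A}(U,V)}{\mathcal{B}_q(U,V)}=\Big|\Gamma\Big(\tfrac{1/2+it}{2}\Big)\Big|^{6}a_3\prod_{p|q}\frac{(1-\frac1p)^5}{1+\frac4p+\frac1{p^2}}\Big(1+O\big((\log\log Q)^M/\log Q\big)\Big).
\]

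As $(S,T)$ runs over its range the pairs $(X,Y)$ run over exactly the configurations occurring in $\mathcal{P}_{A,B}(q)$, so summing the twenty terms yields
\[
\mathcal{Q}_{A_{it},B_{-it}}(q)=\Big|\Gamma\Big(\tfrac{1/2+it}{2}\Big)\Big|^{6}a_3\prod_{p|q}\frac{(1-\frac1p)^5}{1+\frac4p+\frac1{p^2}}\,\mathcal{P}_{A,B}(q)+E_q(t),
\]
with $E_q(t)\ll|\Gamma((1/2+it)/2)|^{6}\prod_{p|q}\frac{(1-1/p)^5}{1+4/p+1/p^2}\cdot\frac{(\log\log Q)^M}{\log Q}\sum_{X,Y}|\mathcal{P}(X,Y;q)|$. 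In the general-position case each $\mathcal{Z}(X,Y)$ is a product of nine factors $\zeta(1+\alpha_i+\alpha_j)\ll\log Q$ while $(q/\pi)^{\delta_{X,Y}}\ll1$, so $\sum_{X,Y}|\mathcal{P}(X,Y;q)|\ll\log^9Q$ and $E_q(t)\ll|\Gamma((1/2+it)/2)|^{6}\prod_{p|q}(\cdots)\log^8Q(\log\log Q)^M$. I would also truncate the $t$-integral to $[-\log Q,\log Q]$: for $|t|>\log Q$ only the six $\Gamma$-factors of $G_{U,V}$ move with $t$ and they decay like $e^{-3\pi|t|/2}$ (or simply use the decay of $\Phi$), so the tail contributes $\ll Q^{-2}$ after summing over $q$. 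Substituting back, using $\int_{-\infty}^{\infty}|\Phi(t)|\,|\Gamma((1/2+it)/2)|^{6}\,dt\ll1$ (cf.\ the proof of Lemma~\ref{trivial-bound}) and $\sum_{q\geq1}\Psi(q/Q)\phi^\flat(q)\prod_{p|q}\frac{(1-1/p)^5}{1+4/p+1/p^2}\ll Q^2$ (from (\ref{H-och-stjarna-och-integrering})), the total error is $O\big(Q^2\log^8Q(\log\log Q)^M\big)$; this proves (\ref{th-3-ref}) under the general-position hypothesis.

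The remaining---and, I expect, most delicate---step is to remove that hypothesis, which is essential because in the intended applications (shifts such as $\{0,\pm i\kappa/\log Q\}$) one genuinely has $\alpha_i+\alpha_j=0$, so individual $\mathcal{P}(X,Y;q)$ blow up. Both sides of (\ref{th-3-ref}) are analytic in $(\alpha_1,\dots,\alpha_6)$ on a polydisc of radius $\asymp1/\log Q$: the left side because $\Lambda(s,\chi)$ is entire and the (finite) sums over $q,\chi$ and the $t$-integral converge locally uniformly, and the right side because its only shift-dependence is through $\mathcal{P}_{A,B}(q)$, which is analytic by the same application of Lemma~2.5.1 of \cite{CFKRS} used for $\mathcal{Q}_{A,B}(q)$ in the sketch of Theorem~\ref{sats1} (the only poles, coming from $\mathcal{Z}$, cancel in the relevant symmetric sum exactly as there). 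Hence the difference $F$ of the two sides is analytic, and we have bounded $|F|\ll Q^2\log^8Q(\log\log Q)^M$ on the general-position set.

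Given any target shifts with $|\alpha_i|\ll1/\log Q$, I would centre a polydisc at them with radii $r_i=2^{i}C/\log Q$ for a suitably large constant $C$; on the distinguished boundary one has $|z_i\pm z_j|\geq|r_i-r_j|-|\alpha_i-\alpha_j|\gg1/\log Q$ and $|z_i|\geq r_i-|\alpha_i|\gg1/\log Q$, so every such point is in general position and the bound just proved applies there. Writing $F$ at the centre by Cauchy's integral formula over that boundary (equivalently, invoking the maximum modulus principle for polydiscs) and estimating trivially, the $\asymp(\log Q)^{-6}$ from the perimeters cancels the $\ll\log^6Q$ from $\prod|z_i-\alpha_i|^{-1}$, so $|F|\ll Q^2\log^8Q(\log\log Q)^M$ at the centre as well---no extra powers of $\log Q$ are lost, unlike in the sketch of Theorem~\ref{sats1}, where the loss came from dividing by $H(0)$. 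This establishes (\ref{th-3-ref}) for all admissible shifts.
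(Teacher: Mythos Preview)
Your proof is correct and follows essentially the same route as the paper's: apply Theorem~\ref{sats1}, work first under the extra ``general position'' hypothesis $|\alpha_i\pm\alpha_j|\gg1/\log Q$, replace the twenty $\mathcal{Q}$-terms by $\mathcal{P}$-terms via Lemmas~\ref{hjalp-lemma-ett}--\ref{hjalp-lemma-tre} on $|t|\le\log Q$, handle the tail and the error by the crude bounds $\mathcal{Z}(X,Y)\ll\log^9Q$ and (\ref{H-och-stjarna-och-integrering}), and then remove the hypothesis by Cauchy's formula on a polydisc with geometrically growing radii together with the CFKRS analyticity argument for $\mathcal{P}_{A,B}(q)$. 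The only cosmetic slips are that your general-position condition $|\alpha_i|\gg1/\log Q$ is never used (all $\zeta$-arguments are of the form $1+\alpha_i\pm\alpha_j$ with $i\neq j$), and your radii $2^iC/\log Q$ need $C$ large relative to the implicit constant in $\alpha_i\ll1/\log Q$ (the paper takes $2^{i+1}C/\log Q$) to guarantee $|z_i\pm z_j|\gg1/\log Q$ on the distinguished boundary.
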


\begin{proof}

Let us denote the shifts in $A$ by $\alpha_1, \alpha_2$ and $\alpha_3$ and the shifts in $B$ by $\alpha_4, \alpha_5$ and $\alpha_6$.
Let $C$ be a positive constant.
We will first prove the above claim under the extra assumption that  
\begin{equation} \label{assumption-sum-alphas}
|\alpha_{i} \pm \alpha_{j}| \geqslant 2C/\log Q, \textrm{ for any } i \neq j, 
\end{equation} 
where $\pm$ means that the condition holds both with the plus sign and with the minus sign. Also, we are of course still assuming that all the shifts are 
$\ll 1/\log Q$.

We apply Theorem \ref{sats1} to the LHS of this theorem. In the RHS of (\ref{sats1-ekv}) we split the integral involving $\Phi(t)$ as follows:
\begin{equation}
\int_{-\infty}^{\infty} = \int_{-\infty}^{-\log Q} 
+ \int_{-\log Q}^{\log Q} + \int_{\log Q}^{\infty}. 
\end{equation} 
Let us denote the contributions from these three parts by 
$P_1,$ $P_2$ and $P_3$. We will first treat the main term $P_2$. 
Then we will show that the term $P_3$ can be absorbed into the error term of this theorem and the situation is the same with $P_1$.

By putting together Lemmas \ref{hjalp-lemma-ett}, \ref{hjalp-lemma-tva} and \ref{hjalp-lemma-tre} we have that
\begin{align} \label{hjalp-trippel}
&G_{A_{it},B_{-it}}
\Big(\frac{\mathcal{A}}{\mathcal{B}_q}\Big)(A_{it},B_{-it}) \\ \nonumber
&= |\Gamma((1/2 + it)/2)|^6 a_3 
\prod_{p|q} \frac{( 1 - \frac{1}{p})^{5}}
{( 1 + \frac{4}{p} + \frac{1}{p^2})}
\Big( 1 + O\Big( \frac{(\log \log Q)^M}{\log Q} \Big) \Big), \\ \nonumber
\end{align}
for $|t| \leqslant \log Q$.

In the RHS of (\ref{sats1-ekv}), $\mathcal{Q}_{A_{it},B_{-it}}(q)$ is a sum of $20$ terms. In each of the latter we substitute in (\ref{hjalp-trippel}). The part of $P_2$ coming from the contribution coming from picking out the main term in (\ref{hjalp-trippel}) in all of the $20$ cases is equal to the main term of this theorem. Next we show that the contribution from each of the $20$ error terms coming from the substitution of (\ref{hjalp-trippel}) can be absorbed into the error term of this theorem.  

Consider one of the $20$ error-parts described above. Since the shifts are assumed to be at most of order $1/\log Q,$ we clearly have that
\begin{equation} \label{q-pi-bound}
\Big( \frac{q}{\pi} \Big)^{\delta_{X,Y}} \ll 1.
\end{equation} 
Also, by the extra assumption (\ref{assumption-sum-alphas}), we have that 
\begin{equation} \label{z-bound-efter-q}
\mathcal{Z}(X,Y) \ll \log^9{Q},
\end{equation} 
for all relevant sets $X$ and $Y$.
Therefore by using Remark \ref{partial-summ-kvall}, we obtain the required upper bound. 

In order to complete the proof of this theorem under the extra assumption we have to show that $P_3$ is sufficiently small to be absorbed into the error term of this theorem. Of course this follows if we are able to find a suitable upper bound for the contribution of any individual of the $20$ terms that $\mathcal{Q}_{A_{it},B_{-it}}(q)$ constitutes. 

We will again make use of (\ref{q-pi-bound}) and (\ref{z-bound-efter-q}). However, as we can no longer use (\ref{hjalp-trippel}), we will instead simply find an upper bound for 
\begin{equation}
G_{A_{it},B_{-it}}
\Big(\frac{\mathcal{A}}{\mathcal{B}_q}\Big)(A_{it},B_{-it}). 
\end{equation} 
As the gamma function $\Gamma(s)$ is bounded for 
$2/5 \leqslant \mathrm{Re}(s) \leqslant 3/5,$ we have 
\begin{equation}
G_{A_{it},B_{-it}} \ll 1. 
\end{equation} 
We can deduce from Lemma \ref{hjalp-lemma-tva} that  
\begin{equation}
\mathcal{B}_q^{-1}(A_{it},B_{-it}) \ll
\prod_{p|q} \frac{( 1 - \frac{1}{p})^{5}}
{( 1 + \frac{4}{p} + \frac{1}{p^2})} 
\end{equation} 
and 
Lemma \ref{hjalp-lemma-tre} yields that 
\begin{equation}
\mathcal{A} (A_{it},B_{-it}) \ll 1. 
\end{equation} 
Using all these upper bounds we reach our conclusion upon remembering 
Remarks \ref{partial-summ-kvall} and \ref{tail-remark}. 

Suppose that the theorem is to be proved for all 
$|\alpha_{i}| \leqslant C/\log Q$. Consider now (without the extra assumption) any $|\alpha_{i}| \leqslant C/\log Q$. 
The idea is to use Cauchy's integral formula in order to go from the above ``easier" case to the general case. Certainly the LHS of this theorem is an analytic function of the complex variables $\alpha_1,$ $\alpha_2,$ $\alpha_3,$ $\alpha_4,$ $\alpha_5$ and $\alpha_6,$ let us denote it by 
L$(\alpha_1, \alpha_2, \alpha_3, \alpha_4, \alpha_5, \alpha_6)$. We can therefore replace it by a 6-fold integral involving 
L$(\beta_1, \beta_2, \beta_3, \beta_4, \beta_5, \beta_6)$. Explicitly, let $D$ be the polydisc defined as the Cartesian product of the open discs $D_{i},$ i.e.\ $D = \prod_{i = 1}^{6} D_{i},$ where 
\begin{equation}
D_{i} := \{s \in \mathbb{C} : |s - \alpha_{i}| < r_{i}\}, 
\end{equation} 
with  
\begin{equation}  
r_{i} = \frac{2^{i+1}C}{\log Q}. 
\end{equation}
An application of Cauchy's integral formula yields that
\begin{align} \label{cauchy-steg-6}
&\textrm{L}(\alpha_1, \alpha_2, \alpha_3, \alpha_4, \alpha_5, \alpha_6) 
\nonumber \\
&=\frac{1}{(2\pi i)^6} \int \cdots \int 
\int_{\partial D_{1} \times \cdots \times \partial D_{6}}
\frac{\textrm{L}(\beta_1, \beta_2, \beta_3, \beta_4, \beta_5, \beta_6)}
{(\beta_{1} - \alpha_{1}) \cdots (\beta_{6} - \alpha_{6})} 
\; d\beta_{1} \ldots d\beta_{6}.
\end{align}
Now we notice that the $\beta_i$ satisfy $|\beta_i - \alpha_i| = r_i$. But this is easily seen to imply that $\beta_i \ll 1/\log Q$ and that 
$|\beta_i \pm \beta_j| \geqslant 2C/\log Q,$ so that this theorem applies if the $\beta_i$-terms are seen as shifts. We substitute in the outcome for 
$\textrm{L}(\beta_1, \beta_2, \beta_3, \beta_4, \beta_5, \beta_6)$ into 
(\ref{cauchy-steg-6}). The error term we get in the integrand goes through and stays of course of the same order. The main term that we now have in the numerator of our integrand, let us denote it by 
R$(\beta_1, \beta_2, \beta_3, \beta_4, \beta_5, \beta_6),$ is an analytic 
function of the complex variables $\beta_1, \beta_2, \beta_3, \beta_4, \beta_5$ and $\beta_6,$ which means that we can undo the Cauchy-process and retrieve R$(\alpha_1, \alpha_2, \alpha_3, \alpha_4, \alpha_5, \alpha_6)$. The former can be seen by using Lemma 2.5.1 in the article \cite{CFKRS} 
(by working in a similar way as in the paragraph after (\ref{cis-nastan-klar})). This completes the proof.
\end{proof}

Theorem \ref{sats1.25} is simpler than its ``earlier" version Theorem \ref{sats1} in the sense that some terms in the former are independent of the shifts. We will next find a simpler version of Theorem \ref{sats1.25}. 

\begin{definition} \label{R-def}
Define
\begin{equation} 
\mathcal{R}_{A,B}(Q)
:= \displaystyle\sum_{\substack{S \subseteq A \\ T \subseteq B \\ |S| = |T|}} 
\mathcal{R}(\bar{S} \cup (-T),\bar{T} \cup (-S);Q),
\end{equation}
where 
\begin{equation} 
\mathcal{R}(X,Y;Q):= Q^{\delta_{X,Y}}  
\prod_{\substack{x \in X \\ y \in Y}} 
\frac{1}{x + y}.
\end{equation}
\end{definition}

\begin{theorem} \label{sats1.3}
Suppose that $|A| = |B| = 3$ and that $\alpha, \beta \ll 1/\log Q,$ for 
$\alpha \in A,$ $\beta \in B$. Suppose also that $\Psi$ and $\Phi$ are reasonable. Then there exists $M \in \mathbb{N}$ such that 
\begin{align} \label{sats1.3-ekv}
&\sum_{q \geqslant 1} \Psi\Big(\frac{q}{Q}\Big) \int\limits_{-\infty}^{\infty} \Phi(t) \sideset{}{^\flat}\sum_{\chi \textrm{\emph{(mod} } q)} \Lambda_{A_{it},B_{-it}}(\chi) \; dt \\
&= a_3(\mathcal{L}) w(\Phi) 
\int\limits_{0}^{\infty} \frac{\Psi(x)x}{2} \; dx \; Q^{2} \mathcal{R}_{A,B}(Q) + O\Big(Q^2 \log^8 Q (\log{\log Q})^M\Big), \nonumber
\end{align}
where we have introduced the notation
\begin{equation}
w(\Phi) := 
\int\limits_{-\infty}^{\infty} \Phi(t) |\Gamma((1/2 + it)/2)|^6 \; dt.
\end{equation}
\end{theorem}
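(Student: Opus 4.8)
The plan is to derive Theorem \ref{sats1.3} from Theorem \ref{sats1.25} by replacing, up to an admissible error, the arithmetic sum $a_3\sum_{q}\Psi(q/Q)\phi^\flat(q)\prod_{p\mid q}\frac{(1-1/p)^5}{1+4/p+1/p^2}\,\mathcal{P}_{A,B}(q)$ with $a_3(\mathcal{L})\,Q^2\int_0^\infty\frac{\Psi(x)x}{2}\,dx\cdot\mathcal{R}_{A,B}(Q)$, and the truncated $t$-integral in (\ref{th-3-ref}) with $w(\Phi)$. Just as in the proof of Theorem \ref{sats1.25}, I would first carry this out under the extra assumption (\ref{assumption-sum-alphas}) that $|\alpha_i\pm\alpha_j|\geqslant 2C/\log Q$ for $i\neq j$, and then remove the assumption by Cauchy's integral formula over a suitable polydisc.

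First, under (\ref{assumption-sum-alphas}), I would compare $\mathcal{P}_{A,B}(q)$ with $\mathcal{R}_{A,B}(Q)$ termwise. For any relevant $x\in X$, $y\in Y$ one has $x+y\ll 1/\log Q$, so $\zeta(1+x+y)=\frac{1}{x+y}+O(1)=\frac{1}{x+y}\bigl(1+O(1/\log Q)\bigr)$, while under (\ref{assumption-sum-alphas}) we also have $|x+y|\gg 1/\log Q$ (the situation responsible for (\ref{z-bound-efter-q})). Multiplying the nine such factors gives $\mathcal{Z}(X,Y)=\bigl(1+O(1/\log Q)\bigr)\prod_{x\in X,\,y\in Y}\frac{1}{x+y}$; and since $\delta_{X,Y}\ll 1/\log Q$ while $\log(q/\pi)=\log Q+O(1)$ for $q\asymp Q$, also $(q/\pi)^{\delta_{X,Y}}=Q^{\delta_{X,Y}}\bigl(1+O(1/\log Q)\bigr)$. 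Using in addition $\mathcal{R}(X,Y;Q)\ll\log^9 Q$ (which holds under (\ref{assumption-sum-alphas}) by the same reasoning as (\ref{q-pi-bound}) and (\ref{z-bound-efter-q})), I obtain $\mathcal{P}(X,Y;q)=\mathcal{R}(X,Y;Q)+O(\log^8 Q)$ uniformly for $q\asymp Q$, for each of the $20$ summands appearing in Definitions \ref{P-def} and \ref{R-def}; summing over the subsets $(S,T)$ then gives $\mathcal{P}_{A,B}(q)=\mathcal{R}_{A,B}(Q)+O(\log^8 Q)$.

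Next I would substitute this into the right-hand side of (\ref{th-3-ref}), along with $\int_{-\log Q}^{\log Q}\Phi(t)|\Gamma((1/2+it)/2)|^6\,dt=w(\Phi)+O(Q^{-1})$ from Remark \ref{tail-remark}. Since neither $\mathcal{R}_{A,B}(Q)$ nor $w(\Phi)$ depends on $q$, the dominant contribution is $\mathcal{R}_{A,B}(Q)\,w(\Phi)\cdot a_3\sum_{q}\Psi(q/Q)\phi^\flat(q)\prod_{p\mid q}\frac{(1-1/p)^5}{1+4/p+1/p^2}$, which by (\ref{H-och-stjarna-och-integrering}) of Remark \ref{partial-summ-kvall} equals $\mathcal{R}_{A,B}(Q)\,w(\Phi)\bigl(a_3(\mathcal{L})Q^2\int_0^\infty\frac{\Psi(x)x}{2}\,dx+O(Q^{29/16})\bigr)$; because $\mathcal{R}_{A,B}(Q)\ll\log^9 Q$ and $w(\Phi)\ll 1$ this is exactly the claimed main term with an error $\ll Q^{29/16}\log^9 Q\ll Q^2$. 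The remaining cross terms pair an $O(\log^8 Q)$ or $O(Q^{-1})$ factor against a $q$-sum that is $\ll Q^2$ by (\ref{H-och-stjarna-och-integrering}), so are $\ll Q^2\log^8 Q$; together with the error already present in (\ref{th-3-ref}), everything fits into $O(Q^2\log^8 Q(\log\log Q)^M)$. This establishes (\ref{sats1.3-ekv}) under (\ref{assumption-sum-alphas}).

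Finally, to remove (\ref{assumption-sum-alphas}) I would repeat the concluding paragraph of the proof of Theorem \ref{sats1.25}: the left side of (\ref{sats1.3-ekv}) is analytic in $\alpha_1,\dots,\alpha_6$, and $\mathcal{R}_{A,B}(Q)$ is too — the poles of the individual $\mathcal{R}(X,Y;Q)$ along $x+y=0$ cancel after the subset sum, by the same argument (via Lemma 2.5.1 of \cite{CFKRS}) that was used for $\mathcal{Q}_{A,B}(q)$. Writing both sides over the polydisc with radii $r_i=2^{i+1}C/\log Q$ by Cauchy's integral formula, inserting the case just proved on the boundary (where $|\beta_i\pm\beta_j|\geqslant 2C/\log Q$), noting that the error survives with the same order, and estimating the integral trivially, one recovers (\ref{sats1.3-ekv}) in general. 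I expect the first step to be the main obstacle: one has to be disciplined that the multiplicative relative errors $1+O(1/\log Q)$ become honest additive errors $O(\log^8 Q)$ only after invoking $\mathcal{R}(X,Y;Q)\ll\log^9 Q$, which itself needs (\ref{assumption-sum-alphas}); and, for the last step, one must notice that $\mathcal{R}_{A,B}(Q)$ is genuinely analytic in the shifts even though each of its twenty summands is merely rational. The rest is routine partial summation and trivial estimation.
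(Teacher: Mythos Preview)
Your proposal is correct and follows essentially the same route as the paper's own proof: you work under the separation assumption (\ref{assumption-sum-alphas}), replace each $\zeta(1+x+y)$ by $\frac{1}{x+y}$ and $(q/\pi)^{\delta_{X,Y}}$ by $Q^{\delta_{X,Y}}$ up to relative errors $1+O(1/\log Q)$, convert these to an additive $O(\log^8 Q)$ via $\mathcal{R}(X,Y;Q)\ll\log^9 Q$, evaluate the remaining $q$-sum by (\ref{H-och-stjarna-och-integrering}), and finally lift the separation assumption by the Cauchy-integral argument, invoking Lemma~2.5.1 of \cite{CFKRS} for the analyticity of $\mathcal{R}_{A,B}(Q)$. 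The only cosmetic difference is that the paper carries out the two replacements sequentially and phrases the zeta step additively as $\zeta(1+\lambda)=\frac{1}{\lambda}+O(1)$, arguing that any product containing at least one $O(1)$-factor is $\ll\log^8 Q$; your multiplicative packaging $\mathcal{P}(X,Y;q)=\mathcal{R}(X,Y;Q)+O(\log^8 Q)$ is equivalent and arguably tidier.
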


\begin{proof}
We will below prove this theorem under the extra assumption (\ref{assumption-sum-alphas}). The general case then follows exactly as in the proof of Theorem \ref{sats1.25}. 

Let us focus on $\mathcal{P}_{A,B}(q)$ in the RHS of Theorem \ref{sats1.25} which is a sum of $20$ terms. We will next look at what such a term looks like (recall (\ref{curly-P-def})).

Now $\mathcal{Z}(X,Y)$ is a product of nine factors of the form 
$\zeta(1+\lambda),$ where $\lambda$ is a linear combination of the shifts (naturally we here do not mean that all $\lambda$ are exactly the same). 
For each factor we may write 
\begin{equation} \label{laurent-zeta-detail}
\zeta(1+\lambda) = \frac{1}{\lambda} + O(1). 
\end{equation}
Notice that due to our assumption (\ref{assumption-sum-alphas}), we have that 
\begin{equation}
1/\lambda \ll \log{Q}. 
\end{equation}
Suppose we multiply together the nine factors of $\mathcal{Z}(X,Y)$. We may choose the term $1/\lambda$ or the term $O(1)$ in (\ref{laurent-zeta-detail}). If we do not in each case choose the former, the corresponding product would be 
\begin{equation}
\ll \log^8 Q. 
\end{equation} 
If we then multiply this by $(\frac{q}{\pi})^{\delta_{X,Y}}$ we would, since (\ref{q-pi-bound}) holds, still obtain something that is 
\begin{equation}
\ll \log^8 Q. 
\end{equation} 
As we have seen before (i.e.\ via use of the fact that (\ref{H-och-stjarna-och-integrering}) $\ll Q^{2}$), one may then conclude that this will give a total contribution of order 
\begin{equation}
\ll Q^2 \log^8 Q 
\end{equation} 
which thus may be absorbed into the error term of this theorem.
Thus (under the assumption of (\ref{assumption-sum-alphas})) we may replace $\mathcal{Z}(X,Y)$ (in Theorem \ref{sats1.25}) by 
\begin{equation}
\displaystyle\prod_{\substack{x \in X \\ y \in Y}} \frac{1}{x + y}. 
\end{equation} 

Next, we consider 
\begin{equation}
\Big(\frac{q}{\pi}\Big)^{\delta_{X,Y}} 
= Q^{\delta_{X,Y}} 
\cdot \exp(\delta_{X,Y} \log(q/Q\pi)). 
\end{equation} 
Notice that since $\Psi$ is compactly supported in $[1,2]$ we may assume that 
$Q \leqslant q \leqslant 2Q.$ Also, clearly $\delta_{X,Y} \ll 1/\log{Q}.$ 
Therefore 
\begin{equation}
\Big(\frac{q}{\pi}\Big)^{\delta_{X,Y}} 
= Q^{\delta_{X,Y}} 
\cdot \Big(1 + O\Big(\frac{1}{\log Q}\Big)\Big). 
\end{equation} 
Since 
\begin{equation}
Q^{\delta_{X,Y}} \ll 1, 
\end{equation} 
the same reasoning as in the previous paragraph tells us that we may replace 
$(\frac{q}{\pi})^{\delta_{X,Y}}$ by $Q^{\delta_{X,Y}}$ (again under the assumption (\ref{assumption-sum-alphas})).

At this point our main term looks like 
\begin{equation} \label{main-termmm-forfining-i}
a_3 \mathcal{R}_{A,B}(Q) 
\int\limits_{-\log Q}^{\log{Q}} \Phi(t) |\Gamma((1/2 + it)/2)|^6 \; dt 
\sum_{q \geqslant 1} \Psi\Big(\frac{q}{Q}\Big) 
\phi^\flat(q) \prod_{p|q} \frac{( 1 - \frac{1}{p} )^{5}}{ 
( 1 + \frac{4}{p} + \frac{1}{p^2} )}. 
\end{equation} 
Using (\ref{H-och-stjarna-och-integrering}) and (\ref{svans}), it follows immediately that (\ref{main-termmm-forfining-i}) is 
\begin{align} \label{main-termmm-forfining-ii} 
&a_3(\mathcal{L}) \times \mathcal{R}_{A,B}(Q) \times 
\Big\{ \int\limits_{-\infty}^{\infty} 
\Phi(t) |\Gamma((1/2 + it)/2)|^6 \; dt + O(Q^{-1}) \Big\} \nonumber \\ 
&\times \Big\{ Q^2 \int\limits_{0}^{\infty} 
\frac{\Psi(x)x}{2}  \; dx + O\big(Q^{29/16} \big) \Big\}.
\end{align}
Recalling that our assumption implies that $\mathcal{R}_{A,B}(Q) \ll \log^9{Q},$ it is easily seen that all but the main term in (\ref{main-termmm-forfining-ii}) can be relegated to the error term of this theorem, leaving us with the desired main term. 
\end{proof}

\section{Differentiated version of Theorem \ref{sats1.3}} \label{Diric-kapitel-differentiating-in-cis}

\begin{theorem} \label{sats1.3-diff}
Suppose that $|A| = |B| = 3$ and that $\alpha, \beta \ll 1/\log Q,$ for 
$\alpha \in A,$ $\beta \in B$. Suppose also that $\Psi$ and $\Phi$ are reasonable. Denote the shifts in $A$ by $\alpha_1, \alpha_2$ and $\alpha_3$ and the shifts in $B$ by $\alpha_4, \alpha_5$ and $\alpha_6$. Then there exists 
$M \in \mathbb{N}$ such that 
\begin{align} 
&\sum_{q \geqslant 1} \Psi\Big(\frac{q}{Q}\Big) \int\limits_{-\infty}^{\infty} \Phi(t) \sideset{}{^\flat}\sum_{\chi \textrm{\emph{(mod} } q)} \frac{\partial^2}{\partial\alpha_i\partial\alpha_j}
\Lambda_{A_{it},B_{-it}}(\chi) \; dt \\
&= a_3(\mathcal{L}) w(\Phi) 
\int\limits_{0}^{\infty} \frac{\Psi(x)x}{2} \; dx \; Q^{2} \frac{\partial^2}{\partial\alpha_i\partial\alpha_j} 
\mathcal{R}_{A,B}(q) + O\Big(Q^2 \log^{10}{Q} (\log{\log Q})^M\Big). \nonumber
\end{align}
\end{theorem}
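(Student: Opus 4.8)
The plan is to use ``Cauchy's integral trick'': Theorem~\ref{sats1.3} already furnishes an asymptotic expression for the function itself, and the Cauchy integral formula for the derivatives of a holomorphic function then delivers the corresponding expression for the derivative. Write
\[ \mathrm{F}(\alpha_1,\ldots,\alpha_6) := \sum_{q \geqslant 1} \Psi\Big(\frac{q}{Q}\Big) \int\limits_{-\infty}^{\infty} \Phi(t) \sideset{}{^\flat}\sum_{\chi \textrm{(mod } q)} \Lambda_{A_{it},B_{-it}}(\chi) \, dt, \]
where $\alpha_1,\alpha_2,\alpha_3$ are the shifts in $A$ and $\alpha_4,\alpha_5,\alpha_6$ those in $B$. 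First I would verify that the left-hand side of Theorem~\ref{sats1.3-diff} equals $\frac{\partial^2}{\partial\alpha_i\partial\alpha_j}\mathrm{F}$: the $\chi$-sum is finite, the $q$-sum is finite (as $\Psi$ is compactly supported), and the $t$-integral converges locally uniformly in the shifts (by the rapid decay of $\Phi$ together with the exponential decay in $t$, in a fixed horizontal strip, of $\Lambda_{A_{it},B_{-it}}(\chi)$ and of its shift-derivatives), so differentiation in the shifts may be carried inside $\sum_q\int\sum_\chi^\flat$.

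Next, fix the four shifts other than $\alpha_i$ and $\alpha_j$ and apply Cauchy's formula for the mixed second derivative. Let $D_i,D_j$ be the discs of radius $r = c/\log Q$ (for a suitable constant $c$) centred at $\alpha_i,\alpha_j$; then
\[ \frac{\partial^2 \mathrm{F}}{\partial\alpha_i\partial\alpha_j} = \frac{1}{(2\pi i)^2} \oint\limits_{\partial D_i} \oint\limits_{\partial D_j} \frac{\mathrm{F}(\ldots,\beta_i,\ldots,\beta_j,\ldots)}{(\beta_i - \alpha_i)^2 (\beta_j - \alpha_j)^2} \, d\beta_j \, d\beta_i, \]
with the obvious modification to a single integral and a cube in the denominator if $i = j$. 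Every $\beta_i \in \partial D_i$ still satisfies $\beta_i \ll 1/\log Q$, so Theorem~\ref{sats1.3} applies on the contour and yields, uniformly on $\partial D_i\times\partial D_j$,
\[ \mathrm{F}(\ldots,\beta_i,\ldots,\beta_j,\ldots) = a_3(\mathcal{L}) w(\Phi) \int\limits_0^\infty \frac{\Psi(x)x}{2}\,dx \; Q^2 \mathcal{R}_{A,B}(Q) + O\big(Q^2 \log^8 Q (\log\log Q)^M\big), \]
where in the main term the shifts are now $\beta_i,\beta_j$ and the four fixed ones.

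Substituting this into the contour integral splits the answer into a main part and an error part. The main part integrates back, by Cauchy's formula again, to $a_3(\mathcal{L}) w(\Phi) \int_0^\infty \frac{\Psi(x)x}{2}\,dx \; Q^2 \frac{\partial^2}{\partial\alpha_i\partial\alpha_j}\mathcal{R}_{A,B}(Q)$, which is the desired main term; here one uses that $\mathcal{R}_{A,B}(Q)$ is an analytic function of the shifts, which follows from the combinatorial identity of Lemma~2.5.1 of \cite{CFKRS} exactly as $\mathcal{Q}_{A,B}(q)$ was shown to be analytic in the sketch of Theorem~\ref{sats1}. The error part is bounded trivially: on $\partial D_i\times\partial D_j$ one has $|\beta_i-\alpha_i|^{-2}|\beta_j-\alpha_j|^{-2} = r^{-4} \asymp \log^4 Q$ while the product of the two contour lengths is $(2\pi r)^2 \asymp (\log Q)^{-2}$, so the two applications of Cauchy's formula inflate the error by a factor $\asymp \log^2 Q$ (likewise in the case $i=j$, where $2!\,r^{-3}\cdot 2\pi r \asymp \log^2 Q$), giving $O\big(Q^2 \log^{10} Q (\log\log Q)^M\big)$, as claimed.

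The step deserving the most care is this error inflation: it is only $\log^2 Q$ precisely because the radius $r$ may be taken as large as the constraint $\beta_i \ll 1/\log Q$ of Theorem~\ref{sats1.3} permits, namely $r \asymp 1/\log Q$, so the choice of contour is essentially tight. The remaining subtlety is the analyticity of the main term of Theorem~\ref{sats1.3} in the shifts (so that the reduction in the first paragraph and the $\mathcal{R}$-integration above are legitimate); as in the proofs of Theorems~\ref{sats1.25} and~\ref{sats1.3}, this can if desired be arranged by first proving the statement under the extra assumption $|\alpha_i \pm \alpha_j| \geqslant 2C/\log Q$ — where $\mathcal{R}_{A,B}(Q)$ is manifestly holomorphic, carrying no factor $1/(x+y)$ with $x+y$ small — and then removing that assumption by a further application of Cauchy's integral formula.
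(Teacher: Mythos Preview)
Your proposal is correct and follows essentially the same approach as the paper: both use ``Cauchy's integral trick'' on circles of radius $r \asymp 1/\log Q$, picking up exactly a factor $\log^2 Q$ in the error. The only cosmetic difference is that the paper applies Cauchy directly to the difference $D := \mathrm{F} - (\text{main term})$, whereas you apply it to $\mathrm{F}$ and then split into main and error parts; these are of course equivalent, and your version carries a bit more detail (the $i=j$ case, the justification for differentiating under the sums and integral, and the analyticity of $\mathcal{R}_{A,B}(Q)$) than the paper's terse treatment.
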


\begin{proof}
This is very simple. We use ``Cauchy's integral trick" and the result follows immediately. 

To expand slightly on what this means, we begin with Theorem \ref{sats1.3}. Note that the LHS and the main term in the RHS of (\ref{sats1.3-ekv}) are analytic functions of the complex variables $\alpha_1$, $\alpha_2$, $\alpha_3$, $\alpha_4$, $\alpha_5$ and $\alpha_6.$ Take the first one and subtract the latter and we get an analytic function, let us call it 
$D(\alpha_1, \alpha_2, \alpha_3, \alpha_4, \alpha_5, \alpha_6).$ Then Theorem~\ref{sats1.3} tells us that 
\[ D(\alpha_1, \alpha_2, \alpha_3, \alpha_4, \alpha_5, \alpha_6) 
\ll Q^2 \log^{8} Q (\log{\log Q})^M. \] 
Now say that the differentiation is with respect to for example $\alpha_3$ and $\alpha_4.$ Using Cauchy's integral formula with $r = 1/\log Q$ we get 
\begin{align*} 
&\frac{\partial^{2} 
D(\alpha_1, \alpha_2, \alpha_3, 
\alpha_4, \alpha_5, \alpha_6)}{\partial \alpha_3 \partial \alpha_4} \\
&= \frac{1}{(2\pi i)^2} \int_{|w_3 - \alpha_3| = r} \int_{|w_4 - \alpha_4| = r}
\frac{D(\alpha_1, \alpha_2, w_3, w_4, \alpha_5, \alpha_6)}
{(w_3 - \alpha_{3})^2 (w_4 - \alpha_{4})^2} \, dw_3 \, dw_4. 
\end{align*}
Then the pathlengths are each of order $(\log Q)^{-1}$ and the integrand is trivially 
\[ \ll (\log Q)^4 \cdot Q^2 \log^{8} Q (\log{\log Q})^M 
= Q^2 \log^{12} Q (\log{\log Q})^M, \] 
giving 
\[ \frac{\partial^2 D}{\partial\alpha_3 \partial\alpha_4} \ll Q^2 \log^{10} Q (\log{\log Q})^M. \]
Then we remember that clearly the derivative of the difference of two analytic functions equals the difference of the derivatives of those two functions. 
\end{proof}

\section{Building-stones in the proof of Theorem \ref{huvudsats}} \label{section-kap-fem}

\subsection{Main Assumption} 

We will now make an ``assumption". 
\\
\\
\textbf{Main Assumption}: Suppose that for all $\frac{5Q}{4} < q \leqslant \frac{7Q}{4}$ and all even primitive Dirichlet characters 
$\chi \textrm{(mod } q)$ we have that all the gaps with $t \in [T,2T]$ between consecutive zeros of 
$L(\frac{1}{2} + it,\chi)$ are\footnote{For either of the two zeros near the endpoints of the interval we will here mean the distance from them to the respective endpoint.} at most $\frac{3\kappa}{\log Q}.$ 

\begin{definition}[Remark on Main Assumption] \label{kappa-def}
We will in fact take $\kappa$ to be the smallest real number such that the Main Assumption is satisfied. 
\end{definition}

\begin{remark}
Thus $\kappa$ will depend on $Q$ and will obviously be well-defined. 
\end{remark}

\subsection{Introducing the function $f(t,\chi,\kappa)$}

Let $\chi$ be an even primitive Dirichlet character modulo $q$. 
Define 
\[ G(n,\chi) := \displaystyle\sum_{l=1}^q \chi(l)e^{2\pi iln/q}.\] 
Then we have $G(n,\chi) = \overline{\chi(n)}G(1,\chi)$, 
where of course 
\[ G(1,\chi) = \displaystyle\sum_{l=1}^q \chi(l)e^{2\pi il/q}. \]
Also (remember that $\chi$ is even), $G(1,\chi)G(1,\bar{\chi}) = q$ and $|G(1,\chi)|^{2} = q$, i.e.\ $|G(1,\chi)| = \sqrt{q}$.
The well-known function 
\[ \xi(s,\chi) 
:= (q/\pi)^{s/2}\Gamma(s/2)L(s,\chi) \] 
satisfies the functional equation (see \cite[Ch. 9]{dav})
\begin{equation} \label{FE-ett}
\xi(1-s,\bar{\chi}) = K(\chi)\xi(s,\chi),
\end{equation} where 
\begin{equation*} 
K(\chi) := q^{1/2}/G(1,\chi).
\end{equation*}

\begin{definition} \label{U-def}
Let $\chi$ be an even primitive Dirichlet character. Then we define 
\begin{equation*} 
U(s,\chi) := K(\chi)^{1/2} 
\xi(s,\chi), 
\end{equation*}
where\footnote{If $K(\chi) = -1$ then $\chi$ cannot be real and 
in this case we define $\sqrt{K(\chi)}$ to be $i$ or $-i$ 
corresponding to whether or not the imaginary part of the 
first non-real number in the sequence 
$\chi(1), \chi(2),...$ is positive.} 
we choose to define $\log{z} = \log{|z|} + i\arg{z}$, with 
$-\pi < \arg{z} < \pi.$
\end{definition}

Let us first show that 
\begin{equation} \label{U-konj}
U(s,\chi) = \overline{U(\bar{s},\bar{\chi})}.
\end{equation} 
To do this, we first notice that both sides of (\ref{U-konj}) represent 
analytic (or meromorphic in the case $q$ = 1) functions, so that
the identity theorem tells us that we only need to establish
the result for $s = \sigma > 1$. We have then 
\begin{align*}
U(\sigma,\chi) &= \sqrt{K(\chi)}\xi(\sigma,\chi) = 
\sqrt{K(\chi)} \; \overline{\xi(\bar{\sigma},\bar{\chi})} \\
&= \Big(\sqrt{K(\chi)}/\overline{\sqrt{K(\bar{\chi})}}\Big) \overline{\sqrt{K(\bar{\chi})}\xi(\bar{\sigma},\bar{\chi})} \\
&= \Big(\frac{\sqrt{K(\chi)} 
\sqrt{K(\bar{\chi})}}{|K(\bar{\chi})|}\Big) \overline{U(\bar{\sigma},\bar{\chi})}
= \frac{1}{1} \cdot \overline{U(\bar{\sigma},\bar{\chi})} 
= \overline{U(\bar{\sigma},\bar{\chi})}.
\end{align*} 
This property of $U(s,\chi)$ tells us that 
$U(\frac{1}{2} + it,\chi) = \overline{U(\frac{1}{2} - it,\bar{\chi})}$.
On the other hand, from the functional 
equation (\ref{FE-ett}) we have 
\begin{align*}
\overline{U(1/2 - it,\bar{\chi})} 
&= \overline{\sqrt{K(\bar{\chi})}\xi(1/2 - it,\bar{\chi})}
= \overline{\sqrt{K(\bar{\chi})}K(\chi)\xi(1/2 + it,\chi)} \\ 
&= \overline{\sqrt{K(\chi)}\xi(1/2 + it,\chi)}
= \overline{U(1/2 + it,\chi)}.
\end{align*}
Hence $U(1/2 + it,\chi)$ is real (for $t \in \mathbb{R}$). 

\begin{definition} \label{W-def}
For $\chi$ an even primitive Dirichlet character, define
\begin{align*}
W(t,\chi) &:= \Big( \frac{q}{\pi} \Big)^{- \frac{1}{4}} 
U(1/2 + it,\chi) \\
&= K(\chi)^{1/2} 
\Big( \frac{q}{\pi} 
\Big)^{\frac{1}{2}(\frac{1}{2} + it) - \frac{1}{4}} 
\Gamma\Big( \frac{(\frac{1}{2} + it)}{2} \Big) 
L(\tfrac{1}{2} + it, \chi). 
\end{align*} 
\end{definition}

\begin{remark} \label{reality-of-W}
By the discussion above it follows immediately that $W(t,\chi) \in \mathbb{R}$ for $t \in \mathbb{R}.$
\end{remark}

\begin{remark} \label{L-W-0}
It is obvious that $L(\frac{1}{2} + it, \chi) = 0$ precisely when 
$W(t,\chi) = 0.$ 
\end{remark}

We next introduce one more function.

\begin{definition} \label{f-def}
\[ f(t,\chi,\kappa) = f(t,\chi,\kappa,Q) 
:= W(t - \frac{\kappa}{\log Q}, \chi) W(t,\chi) 
W(t + \frac{\kappa}{\log Q}, \chi). \]
\end{definition}

\begin{remark} \label{gap-gaps}
Denote the zeros of $f(t,\chi,\kappa)$ with 
$T + \frac{\kappa}{\log Q} \leqslant t 
\leqslant 2T - \frac{\kappa}{\log Q}$ by 
$t_{1,\chi}, t_{2,\chi},...,t_{N_{\chi},\chi},$ ordered in non-decreasing order. Then keeping Remark \ref{L-W-0} in mind, it is simple\footnote{The reader can, for example by drawing a simple diagram, easily do an argument by contradiction to reach the desired conclusion.} to deduce that the Main Assumption implies that \begin{equation} \label{gap-gaps-ekv}
t_{i+1,\chi} - t_{i,\chi} 
\leqslant \frac{\kappa}{\log Q}, 
\end{equation}
for $i = 1,2,...,N_{\chi}-1.$
\end{remark}

\begin{remark} \label{andpunkter}
For future reference we note here that the Main Assumption implies that 
\[ t_{1,\chi} \leqslant T + \frac{2\kappa}{\log Q} \textrm{ and } 
t_{N_{\chi},\chi} \geqslant 2T - \frac{2\kappa}{\log Q}. \]
\end{remark}

\subsection{Wirtinger's inequality and an application of it}

We begin with the statement of the simplest version of Wirtinger's inequality. 

\begin{theorem} \label{wirtinger-ineq}
Suppose that $y(t)$ is a continuously differentiable function which satisfies 
$y(0) = y(\pi) = 0.$ Then 
\[ \int\limits_{0}^{\pi} y(t)^2 \, dt \leqslant \int\limits_{0}^{\pi} y'(t)^2 
\, dt. \]
\end{theorem}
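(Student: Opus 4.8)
The plan is to prove this by expanding $y$ in a Fourier sine series and invoking Parseval's identity. First I would extend $y$ to the odd, $2\pi$-periodic function $\tilde{y}$ on $\mathbb{R}$ (so $\tilde{y}(t) = y(t)$ for $t \in [0,\pi]$, $\tilde{y}(-t) = -\tilde{y}(t)$, $\tilde{y}(t+2\pi) = \tilde{y}(t)$). Because $y$ is continuously differentiable on $[0,\pi]$ \emph{and} $y(0) = y(\pi) = 0$, the one-sided derivatives of $\tilde{y}$ match at every integer multiple of $\pi$, so $\tilde{y}$ is in fact continuously differentiable on all of $\mathbb{R}$. Being odd and continuously differentiable and periodic, $\tilde{y}$ has a Fourier expansion containing only sine terms,
\[ \tilde{y}(t) = \sum_{n=1}^{\infty} b_{n}\sin(nt), \]
which may be differentiated term by term to produce the (uniformly convergent) Fourier expansion $\tilde{y}'(t) = \sum_{n=1}^{\infty} n b_{n}\cos(nt)$ of the continuous periodic function $\tilde{y}'$.

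Applying Parseval's identity to $\tilde{y}$ and to $\tilde{y}'$ gives
\[ \frac{1}{\pi}\int_{-\pi}^{\pi} \tilde{y}(t)^{2}\,dt = \sum_{n=1}^{\infty} b_{n}^{2}, \qquad \frac{1}{\pi}\int_{-\pi}^{\pi} \tilde{y}'(t)^{2}\,dt = \sum_{n=1}^{\infty} n^{2} b_{n}^{2}. \]
Since $\tilde{y}^{2}$ and $(\tilde{y}')^{2}$ are even, each integral over $[-\pi,\pi]$ equals twice the corresponding integral over $[0,\pi]$, and hence
\[ \frac{2}{\pi}\int_{0}^{\pi} y(t)^{2}\,dt = \sum_{n=1}^{\infty} b_{n}^{2} \leqslant \sum_{n=1}^{\infty} n^{2} b_{n}^{2} = \frac{2}{\pi}\int_{0}^{\pi} y'(t)^{2}\,dt, \]
which is exactly the asserted inequality (with equality precisely when $b_{n} = 0$ for all $n \geqslant 2$, i.e.\ when $y$ is a scalar multiple of $\sin t$).

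An alternative route, avoiding Fourier analysis entirely, is to expand a well-chosen square: integrating the cross term by parts and using $\cot^{2}t - \csc^{2}t = -1$ one finds
\[ 0 \leqslant \int_{0}^{\pi}\big( y'(t) - y(t)\cot t \big)^{2}\,dt = \int_{0}^{\pi} y'(t)^{2}\,dt - \int_{0}^{\pi} y(t)^{2}\,dt - \Big[\, y(t)^{2}\cot t \,\Big]_{0}^{\pi}, \]
and since $y$ vanishes to first order at each endpoint, $y(t)^{2}\cot t \to 0$ as $t \to 0^{+}$ and as $t \to \pi^{-}$, so the boundary term drops out and the inequality follows. In either approach the only step that genuinely requires care is the behaviour at the endpoints $t = 0, \pi$: in the first proof, checking that the odd periodic extension is truly continuously differentiable so that termwise differentiation of its Fourier series is legitimate; in the second, checking that $(y' - y\cot t)^{2}$ is integrable near $0$ and $\pi$ and that the boundary term vanishes --- both of which rest precisely on the hypothesis $y(0) = y(\pi) = 0$ together with the differentiability of $y$ there.
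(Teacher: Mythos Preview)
Both arguments are correct and classical. The paper itself does not prove this result; it simply refers the reader to Theorem~256 in Hardy, Littlewood and P\'{o}lya's \emph{Inequalities}. Your Fourier-series proof is essentially the one given there (applied after the odd $2\pi$-periodic extension), and your alternative via the nonnegative integrand $(y' - y\cot t)^{2}$ is another standard direct route. So you have in fact supplied considerably more than the paper does.

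One minor technical remark on the Fourier approach: the parenthetical claim that the termwise-differentiated series converges \emph{uniformly} is more than $C^{1}$ alone guarantees---mere continuity of $\tilde{y}'$ does not force uniform convergence of its Fourier series. Fortunately you do not need this: a single integration by parts (using $y(0)=y(\pi)=0$) shows directly that the Fourier cosine coefficients of $\tilde{y}'$ are $n b_{n}$, and then Parseval applied separately to $\tilde{y}$ and to $\tilde{y}'$ yields your two displayed identities without any appeal to uniform convergence or termwise differentiation.
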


\begin{proof}
The reader is referred to Theorem 256 in Hardy, Littlewood and P\'{o}lya's \cite{HLP}.
\end{proof}

\begin{corollary} \label{wirtinger-ineq-scaled}
For $i = 1,2,...,N_{\chi}-1$ we have that 
\begin{align*} 
\int\limits_{t_{i,\chi}}^{t_{i+1,\chi}} f(t,\chi,\kappa)^2 \, dt 
&\leqslant \Big( \frac{t_{i+1,\chi} - t_{i,\chi}}{\pi} \Big)^2
\int\limits_{t_{i,\chi}}^{t_{i+1,\chi}} f'(t,\chi,\kappa)^2 \, dt \\
&\leqslant \Big( \frac{\kappa}{\pi \log Q} \Big)^2
\int\limits_{t_{i,\chi}}^{t_{i+1,\chi}} f'(t,\chi,\kappa)^2 \, dt. 
\end{align*}
\end{corollary}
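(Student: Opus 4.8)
The plan is to deduce Corollary \ref{wirtinger-ineq-scaled} from Theorem \ref{wirtinger-ineq} by an affine change of variables on each sub-interval $[t_{i,\chi},t_{i+1,\chi}]$, followed by the uniform bound on gap-lengths supplied by Remark \ref{gap-gaps}. First I would fix $\chi$ and an index $i$, write $a := t_{i,\chi}$ and $b := t_{i+1,\chi}$, and define $\phi : [0,\pi] \to [a,b]$ by $\phi(u) := a + \frac{b-a}{\pi}u$, so that $\phi(0) = a$ and $\phi(\pi) = b$. Setting $y(u) := f(\phi(u),\chi,\kappa)$, one checks that $y$ is continuously differentiable on $[0,\pi]$ (since $f(\cdot,\chi,\kappa)$ is a product of three factors $W(\cdot,\chi)$, each of which is smooth in $t$ — indeed $W$ is built from $\Gamma$ and $L$, which are analytic), and that $y(0) = f(a,\chi,\kappa) = 0$ and $y(\pi) = f(b,\chi,\kappa) = 0$ because $a$ and $b$ are consecutive zeros of $f(\cdot,\chi,\kappa)$.

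Next I would apply Theorem \ref{wirtinger-ineq} to $y$, giving $\int_0^\pi y(u)^2\,du \leqslant \int_0^\pi y'(u)^2\,du$. By the chain rule $y'(u) = \frac{b-a}{\pi} f'(\phi(u),\chi,\kappa)$, and substituting $t = \phi(u)$ (so $dt = \frac{b-a}{\pi}\,du$) in both integrals converts the inequality into
\[
\frac{\pi}{b-a}\int_a^b f(t,\chi,\kappa)^2\,dt
\leqslant \Big(\frac{b-a}{\pi}\Big)^2 \cdot \frac{\pi}{b-a}\int_a^b f'(t,\chi,\kappa)^2\,dt,
\]
and cancelling the common factor $\frac{\pi}{b-a}$ yields exactly the first displayed inequality of the corollary, namely $\int_a^b f(t,\chi,\kappa)^2\,dt \leqslant \big(\frac{b-a}{\pi}\big)^2 \int_a^b f'(t,\chi,\kappa)^2\,dt$.

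Finally, for the second inequality I would invoke Remark \ref{gap-gaps}, which states that the Main Assumption forces $t_{i+1,\chi} - t_{i,\chi} \leqslant \frac{\kappa}{\log Q}$ for every $i = 1,\dots,N_\chi - 1$. Hence $\big(\frac{b-a}{\pi}\big)^2 \leqslant \big(\frac{\kappa}{\pi\log Q}\big)^2$, and since $\int_a^b f'(t,\chi,\kappa)^2\,dt \geqslant 0$ we may replace the coefficient to obtain the stated bound. I do not anticipate a genuine obstacle here: the only point requiring a little care is the regularity of $f(\cdot,\chi,\kappa)$ — one must note that each $W(\cdot,\chi)$ is real-analytic in $t$ (Definition \ref{W-def} expresses it via $\Gamma$ and $L$, which are analytic in the relevant region), so the product $f$, and the shifted evaluations in $y$, are certainly continuously differentiable, as Theorem \ref{wirtinger-ineq} requires. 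Everything else is the routine affine substitution and the gap bound already established.
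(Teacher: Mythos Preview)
Your proof is correct and follows exactly the paper's approach: a linear substitution in Theorem \ref{wirtinger-ineq} to move from $[0,\pi]$ to $[t_{i,\chi},t_{i+1,\chi}]$, applied to the continuously differentiable function $f(\cdot,\chi,\kappa)$, followed by the gap bound (\ref{gap-gaps-ekv}) from Remark \ref{gap-gaps}. You have simply written out the details of the affine change of variables that the paper leaves implicit.
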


\begin{proof}
One may make a linear substitution in Theorem \ref{wirtinger-ineq} to obtain a similar result if the function $y(t)$ has zeros at two general points $a$ and $b.$ We do so for the function $f(t,\chi,\kappa),$ which is continuously differentiable, and this gives us the first inequality. The latter inequality follows immediately from (\ref{gap-gaps-ekv}).
\end{proof}

\begin{corollary} \label{wirtinger-ineq-scaled-sum}
For any $\frac{5Q}{4} < q \leqslant \frac{7Q}{4}$ and all even primitive Dirichlet characters $\chi (\textrm{\emph{mod} } q)$ we have that 
\[ \int\limits_{t_{1,\chi}}^{t_{N_{\chi},\chi}} f(t,\chi,\kappa)^2 \, dt 
\leqslant \Big( \frac{\kappa}{\pi \log Q} \Big)^2 
\int\limits_{t_{1,\chi}}^{t_{N_{\chi},\chi}} f'(t,\chi,\kappa)^2 \, dt. \]
\end{corollary}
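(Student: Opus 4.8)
The plan is to deduce this from Corollary \ref{wirtinger-ineq-scaled} by partitioning the interval of integration at the consecutive zeros $t_{1,\chi}, t_{2,\chi}, \ldots, t_{N_\chi,\chi}$ of $f(\cdot,\chi,\kappa)$ and summing the resulting local inequalities. First I would fix $q$ and $\chi$ and write, by additivity of the integral over the partition $t_{1,\chi} \leqslant t_{2,\chi} \leqslant \cdots \leqslant t_{N_\chi,\chi}$,
\[ \int\limits_{t_{1,\chi}}^{t_{N_\chi,\chi}} f(t,\chi,\kappa)^2 \, dt = \sum_{i=1}^{N_\chi - 1} \int\limits_{t_{i,\chi}}^{t_{i+1,\chi}} f(t,\chi,\kappa)^2 \, dt. \]
(If $N_\chi = 1$ both sides of the corollary vanish and there is nothing to prove.) Then I would apply Corollary \ref{wirtinger-ineq-scaled} to each summand. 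The essential point is that the second, weaker inequality in that corollary bounds every summand by $\big(\frac{\kappa}{\pi\log Q}\big)^2 \int_{t_{i,\chi}}^{t_{i+1,\chi}} f'(t,\chi,\kappa)^2 \, dt$ with a constant $\big(\frac{\kappa}{\pi\log Q}\big)^2$ that is the \emph{same} for every $i$ --- this is exactly where the sharper factor $\big(\frac{t_{i+1,\chi}-t_{i,\chi}}{\pi}\big)^2$ has been traded in for the uniform bound coming from (\ref{gap-gaps-ekv}). Hence I can pull the constant out of the sum and recombine the pieces of the derivative integral over the same partition, obtaining
\[ \int\limits_{t_{1,\chi}}^{t_{N_\chi,\chi}} f(t,\chi,\kappa)^2 \, dt \leqslant \Big( \frac{\kappa}{\pi \log Q} \Big)^2 \sum_{i=1}^{N_\chi - 1} \int\limits_{t_{i,\chi}}^{t_{i+1,\chi}} f'(t,\chi,\kappa)^2 \, dt = \Big( \frac{\kappa}{\pi \log Q} \Big)^2 \int\limits_{t_{1,\chi}}^{t_{N_\chi,\chi}} f'(t,\chi,\kappa)^2 \, dt, \]
which is the claim.

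I do not anticipate any substantive obstacle here, since the whole content has already been placed into Corollary \ref{wirtinger-ineq-scaled} and the gap bound (\ref{gap-gaps-ekv}); the present statement is essentially a one-line summation. The only things worth a brief word are: (i) the applicability of Corollary \ref{wirtinger-ineq-scaled} on each subinterval, which rests on $f(\cdot,\chi,\kappa)$ being continuously differentiable (true, as a product of the real-analytic functions $W(t-\tfrac{\kappa}{\log Q},\chi)$, $W(t,\chi)$, $W(t+\tfrac{\kappa}{\log Q},\chi)$) and vanishing at the endpoints $t_{i,\chi}$, $t_{i+1,\chi}$ (true by definition of the $t_{i,\chi}$ as zeros of $f$ --- recall Remark \ref{L-W-0}); and (ii) the harmless degenerate case in which consecutive $t_{i,\chi}$ coincide, which only contributes empty intervals on both sides and so leaves the argument untouched. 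If one wished to be maximally careful one could restrict first to the distinct zeros among the $t_{i,\chi}$, run the argument there, and observe that adjoining the repeated labels changes neither side.

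In short, the hard part of the corollary is not in this proof at all --- it lives in Wirtinger's inequality (Theorem \ref{wirtinger-ineq}) and in the deduction, from the Main Assumption, of the uniform gap bound (\ref{gap-gaps-ekv}). Granting those, the proof of Corollary \ref{wirtinger-ineq-scaled-sum} is the telescoping recombination sketched above.
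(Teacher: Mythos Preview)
Your proof is correct and follows exactly the same approach as the paper, which simply states: ``We sum up the inequalities in Corollary \ref{wirtinger-ineq-scaled} for $i = 1,2,\ldots,N_{\chi}-1$.'' Your version is more detailed (handling $N_\chi=1$ and repeated zeros explicitly), but the mathematical content is identical.
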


\begin{proof}
We sum up the inequalities in Corollary \ref{wirtinger-ineq-scaled} for $i = 1,2,...,N_{\chi}-~1.$ 
\end{proof}

\begin{corollary} \label{wirtinger-ineq-scaled-sum-sum}
\begin{align*} 
&\displaystyle\sum_{\frac{5Q}{4} < q \leqslant \frac{7Q}{4}} \; \;
\sideset{}{^\flat}\sum_{\chi \textrm{\emph{(mod} } q)} \;
\int\limits_{t_{1,\chi}}^{t_{N_{\chi},\chi}} f(t,\chi,\kappa)^2 \, dt \\
&\leqslant \Big( \frac{\kappa}{\pi \log Q} \Big)^2 
\displaystyle\sum_{\frac{5Q}{4} < q \leqslant \frac{7Q}{4}} \; \;
\sideset{}{^\flat}\sum_{\chi \textrm{\emph{(mod} } q)} \;
\int\limits_{t_{1,\chi}}^{t_{N_{\chi},\chi}} f'(t,\chi,\kappa)^2 \, dt. 
\end{align*}
\end{corollary}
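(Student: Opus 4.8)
The plan is simply to add up the per-character inequality of Corollary~\ref{wirtinger-ineq-scaled-sum} over the relevant characters and moduli. First I would fix a modulus $q$ with $\frac{5Q}{4} < q \leqslant \frac{7Q}{4}$ and an even primitive Dirichlet character $\chi\,(\textrm{mod } q)$. Since the Main Assumption is in force throughout this section, Corollary~\ref{wirtinger-ineq-scaled-sum} applies and gives
\[ \int\limits_{t_{1,\chi}}^{t_{N_{\chi},\chi}} f(t,\chi,\kappa)^2 \, dt
\leqslant \Big( \frac{\kappa}{\pi \log Q} \Big)^2
\int\limits_{t_{1,\chi}}^{t_{N_{\chi},\chi}} f'(t,\chi,\kappa)^2 \, dt. \]

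Next I would sum this inequality over all even primitive characters $\chi\,(\textrm{mod } q)$ --- there are $\phi^\flat(q)$ such characters, so this is a finite sum --- and then over the finitely many integers $q$ in the interval $(\frac{5Q}{4}, \frac{7Q}{4}]$. Because the constant $\big(\frac{\kappa}{\pi \log Q}\big)^2$ depends only on $\kappa$ and $Q$ (and $\kappa$ is itself determined by $Q$ via Definition~\ref{kappa-def}), it is independent of both $q$ and $\chi$ and may be factored out of the double sum. What is left is precisely the asserted inequality.

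The only points that genuinely require care --- and that have already been arranged earlier --- are: that for each pair $(q,\chi)$ the ordinates $t_{1,\chi},\dots,t_{N_{\chi},\chi}$ of the zeros of $f(\cdot,\chi,\kappa)$ in the appropriate subinterval of $[T,2T]$ are well-defined and satisfy the gap bound~(\ref{gap-gaps-ekv}) needed to invoke Corollary~\ref{wirtinger-ineq-scaled} (this is the content of Remark~\ref{gap-gaps}); and that the index set over which one sums is finite, so that term-by-term addition of the inequalities is legitimate. Preservation of the inequality under summation is immediate, since on either side each summand is the integral of a nonnegative function. Thus I do not expect any real obstacle: the corollary is a formal consequence of Corollary~\ref{wirtinger-ineq-scaled-sum}, and the ``proof'' amounts to carrying out the two nested finite summations and pulling out the common constant factor.
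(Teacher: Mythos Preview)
Your proposal is correct and follows exactly the same route as the paper: sum the per-character inequality of Corollary~\ref{wirtinger-ineq-scaled-sum} over all even primitive $\chi\,(\textrm{mod }q)$ and over $\frac{5Q}{4}<q\leqslant\frac{7Q}{4}$, pulling out the common factor $\big(\frac{\kappa}{\pi\log Q}\big)^2$. The paper's own proof is a single sentence to this effect; your additional remarks on finiteness and nonnegativity are accurate but not strictly needed.
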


\begin{proof}
We simply sum up the inequalities in Corollary \ref{wirtinger-ineq-scaled-sum} over all the relevant Dirichlet characters.
\end{proof}

\section{Investigating Corollary \ref{wirtinger-ineq-scaled-sum-sum}} \label{Diric-kapitel-investigation}

Our goal in this section is basically to find estimates for the LHS and the RHS in Corollary \ref{wirtinger-ineq-scaled-sum-sum}. This shall then give us an inequality in terms of $\kappa$ which we will consider in Section \ref{Diric-kapitel-conclusion}.

\subsection{Lower bound for the LHS of Corollary \ref{wirtinger-ineq-scaled-sum-sum}} \label{Diric-kapitel-investigation-LHS}

Let us begin by finding a nice expression for a lower bound for the LHS in Corollary \ref{wirtinger-ineq-scaled-sum-sum}. By Remark \ref{andpunkter}, the integration limits are very close to $T$ and $2T$ respectively. Let $\epsilon$ be a small (fixed) positive number. Then clearly the LHS of Corollary \ref{wirtinger-ineq-scaled-sum-sum} is
\begin{equation} \label{olikhets-lhs-delsteg-ett}
\geqslant \displaystyle\sum_{q \geqslant 1} \; 
\chi_{(\frac{5}{4},\frac{7}{4}]} \Big(\frac{q}{Q}\Big) 
\sideset{}{^\flat}\sum_{\chi \textrm{(mod } q)} \;
\int\limits_{T + \epsilon}^{2T - \epsilon} 
f(t,\chi,\kappa)^2 \; dt. 
\end{equation}
Next we notice that if we let the shifts in $A$ and $B$ be 
$\frac{\kappa i}{\log Q},$ $0$ and $-\frac{\kappa i}{\log Q}$ 
(and we do that!), then 
\begin{equation}
f(t,\chi,\kappa)^2 = \Lambda_{A_{it},B_{-it}}(\chi). 
\end{equation} 
Substituting this into (\ref{olikhets-lhs-delsteg-ett}) we have an expression that looks a lot like the LHS of Theorem \ref{sats1}. However, in order to proceed we will need to approximate the characteristic functions by suitable weight-functions $\Psi$ and $\Phi$ that satisfy the conditions of Theorem \ref{sats1.3}. Below $u$ will denote a suitably small (fixed) positive number.

Construction of $\Psi_{1}:$
First we will define
\begin{equation} \label{g-def}
g(x):= \left\{ \begin{array}{l l}
  \exp(-u^2/x) & \quad \mbox{if $x > 0,$}\\
  0 & \quad \mbox{if $x \leqslant 0$.}\\
\end{array} \right. 
\end{equation} 
Then $g(x)$ is smooth and approximates $\chi_{[0,\infty)}(t)$. Now let 
\begin{equation}
\Psi_{1}(t):= g(t-5/4)g(7/4-t). 
\end{equation} 
Then clearly 
\begin{equation} 
\Psi_{1}(t) \leqslant \chi_{(5/4,7/4]}(t). 
\end{equation}

Construction of $\Phi_{1}:$
We define 
\begin{equation}
\Phi_{1}(z) := \frac{1}{\sqrt{\pi} u} 
\int\limits_{T + \epsilon +\sqrt{u}}^{2T - \epsilon - \sqrt{u}} 
\exp(-(v-z)^2/u^2) \; dv 
- \exp\Big((3T)^2 - \frac{1}{u}\Big) \exp(-z^2). 
\end{equation} 
Then $\Phi_{1}(z)$ is reasonable and is pretty ``close" to 
$\chi_{[T,2T]}(t),$ if we see it as a function from $\mathbb{R}$ to $\mathbb{R}$.
Indeed for $t \in \mathbb{R}$ we have the following:  
\begin{equation} \label{phi-ett-olikhet}
\Phi_{1}(t) \leqslant \chi_{[T + \epsilon,2T - \epsilon]}(t). 
\end{equation}
One possible way to show (\ref{phi-ett-olikhet}) is to simply consider what happens when 
$t \in [T + \epsilon,2T - \epsilon],$ when this fails but $t$ is somewhat close to that interval (say $t \in [0,3T]$) and finally when $t \notin [0,3T],$ keeping in mind that the Gaussian integral satisfies 
\begin{equation}
\int\limits_{-\infty}^{\infty} \exp(-t^2) \; dt = \sqrt{\pi} 
\end{equation} 
and the Chernoff bound (\ref{chernoff}) for the complimentary error function. Explicitly, in the first of the three above cases one trivially has 
\begin{equation}
\Phi_{1}(t) \leqslant \frac{2}{\sqrt{\pi} u} 
\int\limits_{0}^{\infty} \exp(-v^2/u^2) \; dv \leqslant 1. 
\end{equation} 
If $t \in [0,T + \epsilon]$ or $t \in [2T - \epsilon,3T]$ then one has
\begin{align} 
&\Phi_{1}(t) \leqslant \frac{1}{\sqrt{\pi} u} 
\int\limits_{\sqrt{u}}^{\infty} 
\exp(-v^2/u^2) \; dv 
- \exp\Big((3T)^2 - \frac{1}{u}\Big) \exp(-(3T)^2) \nonumber \\
&= \frac{1}{2} \cdot \frac{2}{\sqrt{\pi}} 
\int\limits_{1/\sqrt{u}}^{\infty} 
\exp(-v^2) \; dv - \exp(-1/u) \nonumber \\
&\leqslant 1 \cdot \exp(-1/u) - \exp(-1/u) = 0.
\end{align}
If $t < 0$ then one has 
\begin{align} 
&\Phi_{1}(t) \leqslant \frac{1}{\sqrt{\pi} u} 
\int\limits_{\sqrt{u} - t}^{\infty} 
\exp(-v^2/u^2) \; dv 
- \exp\Big((3T)^2 - \frac{1}{u}\Big) \exp(-t^2) \nonumber \\
&= \frac{1}{2} \cdot \frac{2}{\sqrt{\pi}} 
\int\limits_{(\sqrt{u} - t)/u}^{\infty} 
\exp(-v^2) \; dv - \exp\Big((3T)^2 - \frac{1}{u} -t^2 \Big) \nonumber \\
&\leqslant 1 \cdot \exp\Big(-\{(\sqrt{u} - t)/u\}^2\Big) 
- \exp\Big(- \frac{1}{u} -t^2 \Big) \nonumber \\
&\leqslant \exp\Big( -1/u - (t/u)^2 \Big) 
- \exp\Big(- \frac{1}{u} -t^2 \Big) \leqslant 0.
\end{align}
Finally, if $t > 3T$ then one has 
\begin{align} 
&\Phi_{1}(t) \leqslant \frac{1}{\sqrt{\pi} u} 
\int\limits_{\sqrt{u} + t - 2T}^{\infty} 
\exp(-v^2/u^2) \; dv 
- \exp\Big((3T)^2 - \frac{1}{u}\Big) \exp(-t^2) \nonumber \\
&= \frac{1}{2} \cdot \frac{2}{\sqrt{\pi}} 
\int\limits_{(\sqrt{u} + t - 2T)/u}^{\infty} 
\exp(-v^2) \; dv - \exp\Big((3T)^2 - \frac{1}{u} -t^2 \Big) \nonumber \\
&\leqslant 1 \cdot \exp\Big(-\{(\sqrt{u} + t - 2T)/u\}^2\Big) 
- \exp\Big(- \frac{1}{u} -t^2 \Big) \nonumber \\
&\leqslant \exp\Big(-\{(\sqrt{u} + t/3)/u\}^2\Big) 
- \exp\Big(- \frac{1}{u} -t^2 \Big) \nonumber \\
&\leqslant \exp\Big( -1/u - (t/3u)^2 \Big) 
- \exp\Big(- \frac{1}{u} -t^2 \Big) \leqslant 0.
\end{align}

Now going back to (\ref{olikhets-lhs-delsteg-ett}), then clearly the LHS of Corollary \ref{wirtinger-ineq-scaled-sum-sum} is
\begin{equation} \label{olikhets-lhs-delsteg-tva}
\geqslant \displaystyle\sum_{q \geqslant 1} \; 
\Psi_{1} \Big(\frac{q}{Q}\Big) 
\int\limits_{-\infty}^{\infty} 
\Phi_{1}(t)
\sideset{}{^\flat}\sum_{\chi \textrm{(mod } q)} \; 
\Lambda_{A_{it},B_{-it}}(\chi) \; dt. 
\end{equation}
We apply Theorem \ref{sats1.3} to this expression and obtain 
\begin{equation} \label{olikhets-lhs-delsteg-tre}
a_3(\mathcal{L}) w(\Phi_{1}) 
\int\limits_{0}^{\infty} \frac{\Psi_{1}(x)x}{2} \; dx \; Q^{2} \mathcal{R}_{A,B}(Q) 
+ O\Big(Q^2 \log^8 Q (\log{\log Q})^M\Big).  
\end{equation}
At this point we simply claim that (\ref{olikhets-lhs-delsteg-tre}) equals
\begin{align} \label{olikhets-lhs-delsteg-fyra}
&C_{0}(\kappa) a_3(\mathcal{L}) Q^2 \log^9 Q 
\int\limits_{0}^{\infty} \frac{\Psi_{1}(x) x}{2} \; dx 
\int\limits_{-\infty}^{\infty} \Phi_{1}(t) 
|\Gamma((1/2+it)/2)|^6 \; dt \nonumber \\
&+ O\Big(Q^2 \log^8 Q (\log{\log Q})^M\Big), 
\end{align}
where $C_{0}(\kappa)$ is a coefficient\footnote{For the explicit expression of $C_{0}(\kappa),$ see (\ref{kappa-coef-1}).} which naturally depends on the shift $\kappa$. We will return to this in Section \ref{Diric-kapitel-calculation}. 

We now need to find lower bounds for $\Psi_{1}$ and $\Phi_{1}$. One easily shows that 
\begin{equation} 
\Psi_{1}(t) \geqslant \exp(-2u) \chi_{[5/4 + u,7/4 - u]}(t) 
\end{equation}
and
\begin{align} 
\Phi_{1}(t) 
&\geqslant \Big(1 - 2\exp\Big((3T)^2 - \frac{1}{u}\Big)\Big) 
\chi_{[T + \epsilon + 2\sqrt{u}, 2T - \epsilon - 2\sqrt{u}]} (t) \nonumber \\
&\; \; - \exp\Big((3T)^2 - \frac{1}{u}\Big)\exp(-t^2) 
\chi_{\mathbb{R} \backslash [T + \epsilon + 2\sqrt{u}, 2T - \epsilon - 2\sqrt{u}]}(t).  
\end{align}

Substituting these two into (\ref{olikhets-lhs-delsteg-fyra}), we conclude that (\ref{olikhets-lhs-delsteg-fyra}) is
\begin{align}
&\geqslant C_{1}(u,\epsilon) C_{0}(\kappa) a_3(\mathcal{L}) Q^2 \log^9 Q 
\int\limits_{5/4}^{7/4} \frac{x}{2} \; dx 
\int\limits_{T}^{2T} 
|\Gamma((1/2+it)/2)|^6 \; dt \nonumber \\
&\quad + O\Big(Q^2 \log^8 Q (\log{\log Q})^M\Big),
\end{align}
where $C_{1}(u,\epsilon)$ is some constant which tends to $1$ as $u$ and $\epsilon$ both tend\footnote{This is to be interpreted as that we can choose to use as small fixed $u$ and $\epsilon$ as we like.} to~$0$.

We can now conclude that the LHS of Corollary \ref{wirtinger-ineq-scaled-sum-sum} is 
\begin{align} \label{lhs-olikhet}
\geqslant C_{1}(u,\epsilon) C_{0}(\kappa) a_3(\mathcal{L}) Q^2 \log^9 Q 
&\int\limits_{5/4}^{7/4} \frac{x}{2} \; dx 
\int\limits_{T}^{2T} 
|\Gamma((1/2+it)/2)|^6 \; dt \nonumber \\
&+ O\Big(Q^2 \log^8 Q (\log{\log Q})^M\Big). 
\end{align}

\subsection{Upper bound for the RHS of Corollary \ref{wirtinger-ineq-scaled-sum-sum}} \label{Diric-kapitel-investigation-RHS}

Now let us turn our attention to the RHS of Corollary \ref{wirtinger-ineq-scaled-sum-sum}. We will deal with this in a similar way. The inequalities will go the other way around this time and we will make use of Theorem \ref{sats1.3-diff} rather than Theorem \ref{sats1.3}, but the basic strategy will be the same as in the treatment in Section \ref{Diric-kapitel-investigation-LHS} of the LHS. 

The function $f'(t,\chi,\kappa)^2$ features in the RHS of Corollary \ref{wirtinger-ineq-scaled-sum-sum}. Using the definition given in (\ref{f-def}) we find that 
\begin{align}
f'(t,\chi,\kappa) := &W'(t - \frac{2\pi\kappa}{\log Q}, \chi) W(t,\chi) 
W(t + \frac{2\pi\kappa}{\log Q}, \chi) \label{f'-expansion} \\ 
&+ W(t - \frac{2\pi\kappa}{\log Q}, \chi) W'(t,\chi) W(t + \frac{2\pi\kappa}{\log Q}, \chi) \nonumber \\
&+ W(t - \frac{2\pi\kappa}{\log Q}, \chi) W(t,\chi) 
W'(t + \frac{2\pi\kappa}{\log Q}, \chi). \nonumber
\end{align}
Hence we get nine different parts to consider. Now, forgetting for the moment that the integral in Corollary \ref{wirtinger-ineq-scaled-sum-sum} does not quite go from $T$ to $2T$ and that Theorem \ref{sats1.3-diff} requires $\Psi$ and $\Phi$ to be reasonable, let us see what the big picture is. Denoting the shifts in $A$ and $B$ by $\alpha_{1},$ $\alpha_{2},$ $\alpha_{3}$ and $\alpha_{4},$ $\alpha_{5},$ $\alpha_{6},$ we may apply partial differentiation in Theorem \ref{sats1.3-diff} to whichever pair of those six variables we like. Let us decide to after differentiation put 
$\alpha_{1} = \alpha_{4} = \frac{\kappa i}{\log Q},$ 
$\alpha_{2} = \alpha_{5} = 0$ and 
$\alpha_{3} = \alpha_{6} = -\frac{\kappa i}{\log Q}$.
Say for example that we wanted to do the case of 
\begin{equation}
W'(t - \frac{\kappa}{\log Q}, \chi)^2 
W(t,\chi)^2 W(t + \frac{\kappa}{\log Q}, \chi)^2. 
\end{equation} 
Then we differentiate with respect to $\alpha_{3}$ and $\alpha_{4}$. 
Naturally we use that differentiated case of Theorem \ref{sats1.3-diff}. Just as we in Corollary \ref{cor1} got the coefficient $42/9!,$ we will here get coefficients depending on $\kappa.$ The calculations of these $\kappa$-coefficients are elementary but take a lot of time. Indeed, they are similar to the calculation in Section \ref{Diric-kapitel-calculation}, where $C_{0}(\kappa)$ is found. However, before we go into this, we will now first explicitly show that effectively Theorem \ref{sats1.3-diff} may be applied.

We will now construct suitable functions $\Psi_{2}$ and $\Phi_{2}$. 

Construction of $\Psi_{2}:$
Recall the definition of the function $g(x)$ given in (\ref{g-def}).
Now let 
\begin{equation}
\Psi_{2}(t):= \exp(2u) g(t - (5/4 - u)) g((7/4 + u) - t). 
\end{equation} 
Then clearly $\Psi_{2}(t)$ is compactly supported in $[1,2]$ and 
\begin{equation} 
\Psi_{2}(t) \geqslant \chi_{(5/4,7/4]}(t). 
\end{equation}

Construction of $\Phi_{2}:$ 
\begin{equation}
\Phi_{2}(z) := \frac{u^{-1}}{\sqrt{\pi}(1 - \exp(-1/u))} 
\int\limits_{T - \sqrt{u}}^{2T + \sqrt{u}} 
\exp(-(v-z)^2/u^2) \; dv. 
\end{equation} 
For $t \in \mathbb{R}$ we have the following:
\begin{equation} 
\Phi_{2}(t) \geqslant \chi_{[T,2T]}(t). 
\end{equation}

Thus the RHS of Corollary \ref{wirtinger-ineq-scaled-sum-sum} is
\begin{equation} 
\leqslant \Big( \frac{\kappa}{\pi \log Q} \Big)^2 
\displaystyle\sum_{q \geqslant 1} \; \;
\Psi_{2} \Big(\frac{q}{Q}\Big)
\int\limits_{-\infty}^{\infty} 
\Phi_{2}(t) 
\sideset{}{^\flat}\sum_{\chi \textrm{(mod } q)} \; 
f'(t,\chi,\kappa)^2 \; dt.
\end{equation}
We write out the nine terms we get from expanding out $f'(t,\chi,\kappa)^2$ (cf.\ (\ref{f'-expansion})). In each case we may then apply Theorem \ref{sats1.3-diff}. Then it is crucial to find the $\kappa$-depending main-term-coefficients mentioned above. Before exploring what these coefficients are, we must conclude the present discussion. Thus after having done the above in any of the nine cases we are left with an expression of the type (ignoring for now the constant 
$( \frac{\kappa}{\pi \log Q} )^2$)
\begin{align} \label{olikhets-rhs-delsteg-tva}
&C_{i}(\kappa) a_3(\mathcal{L}) Q^2 \log^{11} Q 
\int\limits_{0}^{\infty} \frac{\Psi_{2}(x) x}{2} \; dx 
\int\limits_{-\infty}^{\infty} \Phi_{2}(t) 
|\Gamma((1/2+it)/2)|^6 \; dt \nonumber \\
&\; \; + O\Big(Q^2 \log^{10} Q (\log{\log Q})^M\Big), 
\end{align}
where $C_{i}(\kappa)$ is the relevant $\kappa$-constant in the $i$-th case out of the total nine cases.

We now need to find upper bounds for $\Psi_{2}$ and $\Phi_{2}$. 
For $t \in \mathbb{R}$ we have the following:
\begin{equation} 
\Psi_{2}(t) \leqslant \exp(2u) \chi_{[5/4 - u,7/4 + u]}(t) 
\end{equation}
and
\begin{align} 
\Phi_{2}(t) 
&\leqslant \frac{1}{(1 - \exp(-1/u))} 
\chi_{[T - 2\sqrt{u},2T + 2\sqrt{u}]}(t) \nonumber \\
&\quad + \exp\Big((3T)^2 - \frac{1}{u}\Big)\exp(-t^2) 
\chi_{\mathbb{R} \backslash [T - 2\sqrt{u},2T + 2\sqrt{u}]}(t).  
\end{align}

Substituting these two into (\ref{olikhets-rhs-delsteg-tva}), we conclude that (\ref{olikhets-rhs-delsteg-tva}) is
\begin{align} 
&\leqslant C_{i}(\kappa) a_3(\mathcal{L}) Q^2 \log^{11} Q 
\int\limits_{5/4 - u}^{7/4 + u} \frac{x \exp(2u)}{2} \; dx 
\; \nonumber \\
&\quad \times \int\limits_{-\infty}^{\infty} 
\bigg\{ \frac{1}{(1 - \exp(-1/u))} \chi_{[T - 2\sqrt{u},2T + 2\sqrt{u}]}(t) \nonumber \\
&\quad \quad + \exp\Big((3T)^2 - \frac{1}{u}\Big)\exp(-t^2) 
\chi_{\mathbb{R} \backslash [T - 2\sqrt{u},2T + 2\sqrt{u}]}(t) \bigg\}
|\Gamma((1/2+it)/2)|^6 \; dt \nonumber \\
&\quad + O\Big(Q^2 \log^{10} Q (\log{\log Q})^M\Big). \nonumber \\
&\leqslant C_{2}(u,\epsilon) C_{i}(\kappa) a_3(\mathcal{L}) Q^2 \log^{11} Q 
\int\limits_{5/4}^{7/4} \frac{x}{2} \; dx 
\int\limits_{T}^{2T} 
|\Gamma((1/2+it)/2)|^6 \; dt \nonumber \\
&\quad + O\Big(Q^2 \log^{10} Q (\log{\log Q})^M\Big), \label{olikhets-rhs-delsteg-tre}
\end{align}
where $C_{2}(u,\epsilon)$ is some constant which tends to $1$ as $u$ and $\epsilon$ both tend to $0$.

\subsection{$\kappa$-coefficients} \label{Diric-kapitel-investigation-kappa-coefficients}

We finish this section by stating what these $\kappa$-coefficients are. For simplicity let us use the notation Macl$(f(z)) = f(z) -$ Princ$(f(z)),$ where 
Princ$(f(z))$ is the principal part of $f(z)$. An example is 
Macl$(\frac{\cos z}{z^2}) = \frac{\cos z}{z^2} - \frac{1}{z^2}$. 
Let us introduce the following notation for the $\kappa$-coefficients:
\\
\\
$C_{1}(\kappa)$ in the case of \ldots 
\begin{equation}
W(t - \frac{2\pi\kappa}{\log Q}, \chi)^2 
W'(t,\chi)^2 W(t + \frac{2\pi\kappa}{\log Q}, \chi)^2, 
\end{equation} 
\\
\\
$C_{2}(\kappa) = C_{3}(\kappa)$ in the case of \ldots 
\begin{equation}
W'(t - \frac{2\pi\kappa}{\log Q}, \chi) 
W(t - \frac{2\pi\kappa}{\log Q}, \chi) W(t,\chi)^2 
W'(t + \frac{2\pi\kappa}{\log Q}, \chi) 
W(t + \frac{2\pi\kappa}{\log Q}, \chi), 
\end{equation} 
\\
\\
$C_{4}(\kappa) = C_{5}(\kappa)$ in the case of \ldots 
\begin{equation}
W'(t - \frac{2\pi\kappa}{\log Q}, \chi)^2 
W(t,\chi)^2 W(t + \frac{2\pi\kappa}{\log Q}, \chi)^2 
\end{equation}
and 
\begin{equation}
W(t - \frac{2\pi\kappa}{\log Q}, \chi)^2 
W(t,\chi)^2 W'(t + \frac{2\pi\kappa}{\log Q}, \chi)^2 
\end{equation} 
\\
\\
and finally $C_{6}(\kappa) = C_{7}(\kappa) = C_{8}(\kappa) = C_{9}(\kappa)$ in the case of \ldots 
\begin{equation}
W'(t - \frac{2\pi\kappa}{\log Q}, \chi) 
W(t - \frac{2\pi\kappa}{\log Q}, \chi) W(t,\chi) W'(t,\chi) 
W(t + \frac{2\pi\kappa}{\log Q}, \chi)^2 
\end{equation} 
and 
\begin{equation}
W(t - \frac{2\pi\kappa}{\log Q}, \chi)^2 W(t,\chi) W'(t,\chi) 
W'(t + \frac{2\pi\kappa}{\log Q}, \chi)
W(t + \frac{2\pi\kappa}{\log Q}, \chi). 
\end{equation} 

Thus the nine cases actually only give rise to four different $\kappa$-coefficients.

The calculations needed to find our $\kappa$-coefficients are long, but similar. We present the details of the simplest case, namely $C_{0}(\kappa),$ in Section \ref{Diric-kapitel-calculation}. We have 

\begin{equation} \label{kappa-coef-1}
C_{0}(\kappa) = \textrm{Macl}\Big\{ 
\frac{\cos\kappa}{\kappa^{8}} 
+ \frac{\sin\kappa}{\kappa^{9}} 
+ \frac{\cos(2\kappa)}{8\kappa^{8}} 
- \frac{\sin(2\kappa)}{2\kappa^{9}} 
\Big\},
\end{equation}

\begin{align} 
C_{1}(\kappa) 
=& \textrm{ Macl}\Big\{ 
- \frac{\cos\kappa}{4\kappa^{8}} 
+ \frac{3\sin\kappa}{4\kappa^{9}} 
+ \frac{\cos\kappa}{\kappa^{10}}
- \frac{\sin\kappa}{\kappa^{11}} \\
&+ \frac{\cos(2\kappa)}{96\kappa^{8}} 
- \frac{\sin(2\kappa)}{8\kappa^{9}} 
- \frac{\cos(2\kappa)}{2\kappa^{10}} 
+ \frac{\sin(2\kappa)}{2\kappa^{11}} 
\Big\}, \nonumber 
\end{align}

\begin{align} 
C_{2}(\kappa) 
=& \textrm{ Macl}\Big\{ 
- \frac{3\sin\kappa}{4\kappa^{9}} 
- \frac{11\cos\kappa}{4\kappa^{10}}
- \frac{21\sin\kappa}{4\kappa^{11}} \\
&+ \frac{\cos(2\kappa)}{32\kappa^{8}} 
- \frac{3\sin(2\kappa)}{8\kappa^{9}} 
- \frac{53\cos(2\kappa)}{32\kappa^{10}} 
+ \frac{21\sin(2\kappa)}{8\kappa^{11}} 
\Big\}, \nonumber 
\end{align}

\begin{align} 
C_{4}(\kappa) 
=& \textrm{ Macl}\Big\{ 
- \frac{\cos\kappa}{12\kappa^{8}} 
+ \frac{5\sin\kappa}{4\kappa^{9}} 
+ \frac{3\cos\kappa}{\kappa^{10}}
+ \frac{5\sin\kappa}{\kappa^{11}} \\
&- \frac{\cos(2\kappa)}{32\kappa^{8}} 
+ \frac{3\sin(2\kappa)}{8\kappa^{9}} 
+ \frac{13\cos(2\kappa)}{8\kappa^{10}} 
- \frac{5\sin(2\kappa)}{2\kappa^{11}} 
\Big\} \nonumber 
\end{align}
and
\begin{equation} 
C_{6}(\kappa) = \textrm{Macl}\Big\{ 
\frac{\cos\kappa}{8\kappa^{8}} 
- \frac{3\sin\kappa}{8\kappa^{9}} 
- \frac{5\cos\kappa}{4\kappa^{10}}
- \frac{3\sin\kappa}{4\kappa^{11}} 
- \frac{\cos(2\kappa)}{16\kappa^{10}} 
+ \frac{3\sin(2\kappa)}{8\kappa^{11}} 
\Big\}.
\end{equation}

\begin{remark} \label{42-magi}
Note that the $\kappa^0$-coefficient in the Taylor series of $C_{0}(\kappa)$ is $\frac{42}{9!}$ and that the $\kappa^0$-coefficient in the Taylor series of $C_{i}(\kappa)$ for $1 \leqslant i \leqslant 9$ is $\frac{3}{10!}$. These are namely the expected results if we let $\kappa \to 0$.
\end{remark}

\section{Conclusion of the proof of Theorem \ref{huvudsats}} \label{Diric-kapitel-conclusion}

Having done the hard work in Section \ref{Diric-kapitel-investigation}, we now just have to put things together. 
We have shown that the LHS of 
Corollary \ref{wirtinger-ineq-scaled-sum-sum} is (see (\ref{lhs-olikhet})) 
\begin{align} 
&\geqslant C_{1}(u,\epsilon) C_{0}(\kappa) a_3(\mathcal{L}) Q^2 \log^9 Q 
\int\limits_{5/4}^{7/4} \frac{x}{2} \; dx 
\int\limits_{T}^{2T} 
|\Gamma((1/2+it)/2)|^6 \; dt \nonumber \\
&+ O\Big(Q^2 \log^8 Q (\log{\log Q})^M\Big).  
\end{align}
We have shown that the RHS of 
Corollary \ref{wirtinger-ineq-scaled-sum-sum} is 
(see (\ref{olikhets-rhs-delsteg-tre}) and the comment just above (\ref{kappa-coef-1}) and also remember to once again include the constant 
$( \frac{\kappa}{\pi \log Q} )^2$ which we temporarily left out from our discussion before)
\begin{align} 
&\leqslant 
\Big( \frac{\kappa}{\pi \log Q} \Big)^2
C_{2}(u,\epsilon) 
\Big\{ C_{1}(\kappa) + 2C_{2}(\kappa) +2C_{4}(\kappa) +4C_{6}(\kappa) \Big\} a_3(\mathcal{L}) Q^2 \log^{11} Q \nonumber \\
&\quad \times \int\limits_{5/4}^{7/4} \frac{x}{2} \; dx 
\int\limits_{T}^{2T} 
|\Gamma((1/2+it)/2)|^6 \; dt 
+ O\Big(Q^2 \log^{8} Q (\log{\log Q})^M\Big). 
\end{align}

The above clearly implies that 
\begin{equation} \label{kappa-motsagelse-ekv}
C_{0}(\kappa) \leqslant \Big( \frac{\kappa}{\pi} \Big)^2 
\Big\{ C_{1}(\kappa) + 2C_{2}(\kappa) + 2C_{4}(\kappa) + 4C_{6}(\kappa) \Big\}.
\end{equation}  
Equality holds in (\ref{kappa-motsagelse-ekv}) when 
$\kappa \approx 7.42 \approx 1.18 \cdot 2\pi$.
If $\kappa$ is smaller than that value, then we do get a contradiction to (\ref{kappa-motsagelse-ekv}) and hence to the Main Assumption. 
In other words we must have that 
$\kappa > 7.42 \approx 1.18 \cdot 2\pi$. But this (a priori) immediately gives us our Main Theorem (Theorem \ref{huvudsats}).

\section{How to calculate the $\kappa$-coefficients} \label{Diric-kapitel-calculation}

Here we show (\ref{kappa-coef-1}), the calculation of the other 
$C_{i}(\kappa)$-coefficients being similar although slightly more difficult.

We begin by considering (\ref{olikhets-lhs-delsteg-tre}), keeping in mind that 
\begin{equation}
\mathcal{R}_{A,B}(Q)
= \displaystyle\sum_{\substack{S \subseteq A \\ T \subseteq B \\ |S| = |T|}} 
\mathcal{R}(\bar{S} \cup (-T),\bar{T} \cup (-S);Q), 
\end{equation} 
where 
\begin{equation}
\mathcal{R}(X,Y;Q) = Q^{\delta_{X,Y}} 
\prod_{\substack{x \in X \\ y \in Y}} 
\frac{1}{x + y} 
\end{equation}
and that we essentially want the shifts in the sets $A$ and $B$ to be 
$\frac{\kappa i}{\log Q},$ $0$ and $-\frac{\kappa i}{\log Q}$. 

One way to explicitly proceed is as follows. Let 
$A = \{ ai + \delta, bi + 2\delta, ci + 4\delta \}$ and 
$B = \{ di + \delta, ei + 2\delta, fi + 4\delta \},$
where $a = d = \frac{\kappa}{\log Q},$ $b = e = 0,$ 
$c = f = \frac{-\kappa}{\log Q}$ and $\delta$ is a small number which we will let tend to $0$. There are in total $20$ choices for the sets $S$ and $T$. It will turn out that we only have to look at $10$ of them in detail, by a symmetry-argument. To those $10$ pairs of $S$ and $T$ which we must consider the corresponding pairs of $\bar{S} \cup (-T)$ and $\bar{T} \cup (-S)$ are as follows:
\begin{equation} \label{appendix-kappa-1}
\{ ai + \delta, bi + 2\delta, ci + 4\delta \} \textrm{ \& } 
\{ di + \delta, ei + 2\delta, fi + 4\delta \}, 
\end{equation}
\begin{equation} \label{appendix-kappa-2}
\{ bi + 2\delta, ci + 4\delta, -di -\delta \} \textrm{ \& } 
\{ ei + 2\delta, fi + 4\delta, -ai - \delta \}, 
\end{equation}
\begin{equation} \label{appendix-kappa-3}
\{ bi + 2\delta, ci + 4\delta, -ei - 2\delta \} \textrm{ \& } 
\{ di + \delta, fi + 4\delta, -ai - \delta \}, 
\end{equation}
\begin{equation} \label{appendix-kappa-4}
\{ bi + 2\delta, ci + 4\delta, -fi - 4\delta \} \textrm{ \& } 
\{ di + \delta, ei + 2\delta, -ai - \delta \}, 
\end{equation}
\begin{equation} \label{appendix-kappa-5}
\{ ai + \delta, ci + 4\delta, -di - \delta \} \textrm{ \& } 
\{ ei + 2\delta, fi + 4\delta, -bi - 2\delta \}, 
\end{equation}
\begin{equation} \label{appendix-kappa-6}
\{ ai + \delta, ci + 4\delta, -ei - 2\delta \} \textrm{ \& } 
\{ di + \delta, fi + 4\delta, -bi - 2\delta \}, 
\end{equation}
\begin{equation} \label{appendix-kappa-7}
\{ ai + \delta, ci + 4\delta, -fi - 4\delta \} \textrm{ \& } 
\{ di + \delta, ei + 2\delta, -bi - 2\delta \}, 
\end{equation}
\begin{equation} \label{appendix-kappa-8}
\{ ai + \delta, bi + 2\delta, -di - \delta \} \textrm{ \& } 
\{ ei + 2\delta, fi + 4\delta, -ci - 4\delta \}, 
\end{equation}
\begin{equation} \label{appendix-kappa-9}
\quad \quad \quad \{ ai + \delta, bi + 2\delta, -ei - 2\delta \} \textrm{ \& } 
\{ di + \delta, fi + 4\delta, -ci - 4\delta \} \textrm{ and}
\end{equation}
\begin{equation} \label{appendix-kappa-10}
\{ ai + \delta, bi + 2\delta, -fi - 4\delta \} \textrm{ \& } 
\{ di + \delta, ei + 2\delta, -ci - 4\delta \}. 
\end{equation}

In each of the above listed cases (and in the other $10$ cases although we never have to do the latter ones in ``practice") we will treat the relevant $\mathcal{R}(X,Y;Q)$ by representing both $Q^{\delta_{X,Y}}$ and
$\displaystyle\prod_{\substack{x \in X \\ y \in Y}} \frac{1}{x + y}$ by their Laurent series and multiply them together, seeing $\delta$ as our variable. We then proceed by summing up all the $20$ Laurent series to obtain one final Laurent series. Since (\ref{olikhets-lhs-delsteg-tva}) is a continuous function of $\delta$ as $\delta \to 0,$ so must (\ref{olikhets-lhs-delsteg-tre}) be. This means that the Laurent series of $\mathcal{R}_{A,B}(Q)$ which we had obtained can not have any negative $\delta$-powers. Furthermore, we are uninterested in the positive $\delta$-powers as we will let $\delta \to 0$. We therefore focus on identifying the $\delta^0$-coefficient and therefore in turn on finding the $\delta^0$-coefficient in each of the $20$ cases. Below we will discuss how to do the $10$ cases listed above, before explaining why we then get the other $10$ cases for ``free".

These $\delta^0$-coefficients will be expressions in terms of $\kappa$. We will treat them as Laurent series. Each individual such Laurent series may have negative $\kappa$-powers but the sum of all $20$ Laurent series is an analytic expression in $\kappa,$ since (\ref{olikhets-lhs-delsteg-tva}) and therefore (\ref{olikhets-lhs-delsteg-tre}) is a continuous function of $\kappa$ as 
$\kappa \to 0$. We here remark that this sum will be some constant in terms of $\kappa,$ namely $C_{0}(\kappa)$ by definition, multiplied by $\log^9 Q$. We will then substitute this expression for $\mathcal{R}_{A,B}(Q)$ into (\ref{olikhets-lhs-delsteg-tre}). Because of the cancellation of possible negative $\kappa$-powers we will below in each of the $20$ cases focus on finding only the analytic part of the $\kappa$-expressions. The sum of these is thus guaranteed to equal $C_{0}(\kappa)$.

Upon inspection of (\ref{appendix-kappa-1})-(\ref{appendix-kappa-10}) we notice that (\ref{appendix-kappa-1}), (\ref{appendix-kappa-4}), (\ref{appendix-kappa-6}) and (\ref{appendix-kappa-8}) will then not give any contribution to $C_{0}(\kappa),$ since in those cases $\kappa$ will not feature in $\delta_{X,Y}$ (the latter means that we will only get negative $\kappa$-powers). Moreover, clearly the contribution from the terms (\ref{appendix-kappa-3}) and (\ref{appendix-kappa-5}) will be equal and likewise the contribution from (\ref{appendix-kappa-7}) will be the same as from (\ref{appendix-kappa-9}).

Let us now in some detail find the contribution to $C_{0}(\kappa)$ from the term (\ref{appendix-kappa-2}) above. The basic plan is to write out what we get from multiplying the Laurent series of 
$Q^{\delta_{X,Y}}$ and 
$\displaystyle\prod_{\substack{x \in X \\ y \in Y}} \frac{1}{x + y}$ and then let $\delta \to 0$. This will give us an expression in terms of $\kappa$ and by eliminating any negative $\kappa$-powers, we obtain the contribution in this case to $C_{0}(\kappa)$. Here are the details:
\begin{align}
&\exp\Big(\Big(\frac{2\kappa i}{\log Q} - 5\delta\Big)\log Q\Big) \cdot 
\frac{1}{\big(\frac{2\kappa i}{\log Q} + 2\delta\big)} \cdots 
\frac{1}{\big(\frac{2\kappa i}{\log Q} - 8\delta\big)} \nonumber \\
&= \exp(2\kappa i) \cdot \exp(-5\delta \log Q) \cdot 
\frac{\log^{8}Q}{-64\delta \kappa^8} \cdot 
\frac{1}{\big(1 - \frac{20\delta \log Q}{i\kappa} + \ldots \big)} \nonumber \\
&= \exp(2\kappa i) \cdot \exp(-5\delta \log Q) \cdot 
\frac{\log^{8}Q}{-64\delta \kappa^8} \cdot 
\Big(1 + \frac{20\delta \log Q}{i\kappa} + \ldots \Big).
\end{align}
Recall that the pair of sets corresponding to (\ref{appendix-kappa-2}) comes from a pair of $S$ and $T$. Had we instead done the above calculation in the case corresponding to the pair of sets coming from the pair $\bar{S}$ and $\bar{T}$ we would have obtained the same expression, but with a minus-sign in front and with negative exponentials. Putting these two 
together\footnote{One can for each pair of sets in the list (\ref{appendix-kappa-1})-(\ref{appendix-kappa-10}) make the analogous choice of a pair of sets among the ``other" $10$ pairs of sets. Then they will together make a contribution that can be expressed in trigonometric functions.}
and focusing on the $\delta^0$-coefficient we obtain
\begin{align} 
&\frac{2 \cdot \cos(2\kappa) \cdot (-5) \cdot \log^{9}Q}{(-64) \cdot \kappa^{8}} 
+ \frac{2 \cdot \sin(2\kappa) \cdot 20 \cdot \log^{9}Q}{(-64) \cdot \kappa^{9}} \nonumber \\
&= \log^{9}Q \cdot \Big\{ \frac{5\cos(2\kappa)}{32\kappa^8} 
- \frac{5\sin(2\kappa)}{8\kappa^9} \Big\}.
\end{align}
Again, recall that our strategy is to ignore any negative $\kappa$-powers, hence the introduction of the ``Macl"-notation used in this text in Section \ref{Diric-kapitel-investigation-kappa-coefficients}. Thus our final answer (and to make notation here coherent with the one in Section \ref{Diric-kapitel-investigation-kappa-coefficients} we exclude the 
``$\log^{9} Q$"-term) for the contribution from these two (out of the total $20$) pairs equals 
\begin{equation} 
\textrm{Macl}\Big\{ \frac{5\cos(2\kappa)}{32\kappa^8} 
- \frac{5\sin(2\kappa)}{8\kappa^9} \Big\}.
\end{equation}
It is possible to handle the contribution coming from (\ref{appendix-kappa-3}), (\ref{appendix-kappa-7}) and (\ref{appendix-kappa-10}) in an analogous way and one finds that the contribution in each of these three cases is respectively
\begin{equation} 
\textrm{Macl}\Big\{ \frac{2\cos\kappa}{5\kappa^8} 
- \frac{19\sin\kappa}{20\kappa^9} \Big\},
\end{equation}
\begin{equation} 
\textrm{Macl}\Big\{ \frac{\cos\kappa}{10\kappa^8} 
+ \frac{29\sin\kappa}{20\kappa^9} \Big\}
\end{equation}
and
\begin{equation} 
\textrm{Macl}\Big\{ - \frac{\cos(2\kappa)}{32\kappa^8} 
+ \frac{\sin(2\kappa)}{8\kappa^9} \Big\}.
\end{equation}

Hence, putting things together we have that
\begin{align}
C_{0}(\kappa) &= \textrm{Macl}\Big\{ 
\Big( \frac{5\cos(2\kappa)}{32\kappa^8} 
- \frac{5\sin(2\kappa)}{8\kappa^9} \Big) 
+ 2 \cdot \Big( \frac{2\cos\kappa}{5\kappa^8} 
- \frac{19\sin\kappa}{20\kappa^9} \Big) \nonumber \\
&\quad + 2 \cdot \Big( \frac{\cos\kappa}{10\kappa^8} 
+ \frac{29\sin\kappa}{20\kappa^9} \Big) 
+ \Big( - \frac{\cos(2\kappa)}{32\kappa^8} 
+ \frac{\sin(2\kappa)}{8\kappa^9} \Big) \Big\} \nonumber \\
&= \textrm{Macl}\Big\{ 
\frac{\cos\kappa}{\kappa^{8}} 
+ \frac{\sin\kappa}{\kappa^{9}} 
+ \frac{\cos(2\kappa)}{8\kappa^{8}} 
- \frac{\sin(2\kappa)}{2\kappa^{9}} 
\Big\}.
\end{align}

\section{References}

\end{document}